\documentclass[a4paper,11pt]{article}
\usepackage[latin1]{inputenc}
\usepackage{amsfonts}
\usepackage{amsmath}
\usepackage{latexsym}
\usepackage[all]{xy}
\usepackage{amsthm}
\usepackage{graphicx}
\usepackage[T1]{fontenc}

\usepackage{hyperref}

\usepackage{caption2}

\usepackage{amscd}

\author{Álvaro Martínez-Pérez and
Manuel A. Morón \footnote{Partially supported by MTM 2006-00825}}

\date{Departamento de Geometría y Topología, Universidad Complutense de Madrid. Madrid 28040, Spain\\
        e-mail: mamoron@mat.ucm.es}

\title{Uniformly continuous maps between ends of $\mathbb{R}$-trees}

\begin{document}
\maketitle

\newtheorem{definicion}{Definition}[section]
\newtheorem{nota}[definicion]{Remark}
\newtheorem{prop}[definicion]{Proposition}
\newtheorem{lema}[definicion]{Lemma}
\newtheorem{obs}[definicion]{Remark}
\newtheorem{teorema}[definicion]{Theorem}
\newtheorem{cor}[definicion]{Corollary}
\newtheorem{ejp}[definicion]{Example}

\newtheorem{definicion2}{Definition}[subsection]
\newtheorem{nota2}[definicion2]{Remark}
\newtheorem{prop2}[definicion2]{Proposition}
\newtheorem{lema2}[definicion2]{Lemma}
\newtheorem{obs2}[definicion2]{Remark}
\newtheorem{teorema2}[definicion2]{Theorem}
\newtheorem{cor2}[definicion2]{Corollary}
\newtheorem{ejp2}[definicion2]{Example}

\begin{abstract} There is a well-known correspondence between infinite trees and
ultrametric spaces which can be interpreted as an equivalence of
categories and comes from considering the end space of the tree.

In this equivalence, uniformly continuous maps between the end
spaces are translated to some classes of coarse maps (or even
classes of metrically proper lipschitz maps) between the trees.
\end{abstract}

Keywords: Tree, ultrametric, end space, coarse map, uniformly
continuous, non expanding map.

MSC: Primary: 54E35; 53C23 Secondary: 54C05; 51K05

\section{Introduction}
This paper is mainly inspired by a recent, interesting and
beautiful one due to Bruce Hughes \cite{Hug} but it is also
motivated by \cite{M-P} where a complete ultrametric was defined
on the sets of shape morphisms between compacta.

In \cite{M-P} it was proved that every shape morphism induces a
uniformly continuous map between the corresponding ultrametric
spaces of shape morphisms which are, in particular, complete and
bounded as metric spaces. Moreover Hughes established some
categorical equivalences for some classes of ultrametric spaces
and local similarity equivalences to certain categories of
geodesically complete rooted $\mathbb{R}$-trees and certain
equivalence classes of isometries at infinity.

In view of that, it is natural for us to ask for a description of
uniform types (the classification by means of uniform
homeomorphism) of end spaces of geodesically complete rooted
$\mathbb{R}$-trees in terms of some geometrical properties of the
trees.

To answer these questions is the aim of this paper and we find
herein that the bounded coarse geometry, see \cite{Roe1} and
\cite{Roe2}, of $\mathbb{R}$-trees is an adequate framework to do
that.

Also, we would like to point out some important differences
between this paper and Hughes's. First of all we treat different,
although related, categories:

The morphisms in every category of ultrametric spaces used in
\cite{Hug} are isomorphisms for the uniform category of
ultrametric spaces, i.e. they are uniformly continuous
homeomorphisms, while in this paper we get results for the whole
category of complete bounded ultrametric spaces and uniformly
continuous maps between them (not only for uniformly continuous
homeomorphism).

But above all, we get an explicit formula to construct a
non-expansive map between two trees that induces a given uniformly
continuous function between the corresponding end spaces. To
obtain this formula we use a procedure described by Borsuk,
\cite{Bo2}, to find a suitable modulus of continuity associated to
a uniformly continuous function. This is the way in which we pass
from the total disconnectedness of ultrametric spaces to the
strong connectivity of any ray in the tree.

Our main results in this paper can be summarized as follows:

\emph{The category of complete ultrametric spaces with diameter
bounded above by 1 and uniformly continuous maps between them is
isomorphic to any of the following categories:}

\emph{1) Geodesically complete rooted $\mathbb{R}$-trees and
metrically proper homotopy classes of metrically proper continuous
maps between them.}

\emph{2) Geodesically complete rooted $\mathbb{R}$-trees and
coarse homotopy classes of coarse continuous maps between them.}

\emph{3) Geodesically complete rooted $\mathbb{R}$-trees and
metrically proper non-expansive homotopy classes of metrically
proper non expansive continuous maps between them.}

We finish this paper recovering, as a consequence of our
constructions, the classical relation between the proper homotopy
type of a locally finite simplicial tree and the topological type
of its Freudenthal end space, see \cite{BQ}.

Although our main source of information on $\mathbb{R}$-trees is
Hughes's paper \cite{Hug}, it must be also recommended the
classical book \cite{Ser} of Serre and the survey \cite{Be} of
Bestvina for more information and to go further, let us say that
in \cite{Morgan}, J. Morgan treats a generalization of
$\mathbb{R}$-trees called $\Lambda$-trees. Moreover, in \cite{HR},
Hughes and Ranicki treat applications of ends, not only ends of
trees, to topology.

\section{Trees}

We are going to recall some basic properties on trees mainly
extracted from \cite{Hug}.

\begin{definicion} A \emph{real tree}, or \emph{$\mathbb{R}$-tree} is a metric space $(T,d)$
that is uniquely arcwise connected and $\forall x, y \in T$, the
unique arc from $x$ to $y$, denoted $[x,y]$, is isometric to the
subinterval $[0,d(x,y)]$ of $\mathbb{R}$.
\end{definicion}

\begin{lema} \label{arcosintersec} If $T$ is an $\mathbb{R}$-tree
and $v,w,z \in T$, then there exists $x\in T$ such that $[v,w]\cap
[v,z]=[v,x]$.
\end{lema}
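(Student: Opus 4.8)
The plan is to show that the intersection $[v,w]\cap[v,z]$ is a subarc of $[v,w]$ emanating from $v$, and then to identify its other endpoint. First I would observe that both $[v,w]$ and $[v,z]$ are isometric copies of closed intervals starting at $v$ (identifying $[v,w]$ with $[0,d(v,w)]$ via the arc-length parametrization $\gamma$, and similarly $[v,z]$ with $[0,d(v,z)]$ via $\sigma$), so their intersection certainly contains $v$. The key claim is that the intersection is a connected subset of the arc $[v,w]$: if $p\in[v,w]\cap[v,z]$, then the subarc $[v,p]$ must be contained in the intersection as well. This is where unique arc-connectedness is essential — the arc from $v$ to $p$ is unique, so the portion of $\gamma$ from $v$ to $p$ and the portion of $\sigma$ from $v$ to $p$ must coincide as subsets of $T$, forcing $[v,p]\subseteq[v,w]\cap[v,z]$.

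Granting that claim, the set $A:=\{t\in[0,d(v,w)] : \gamma(t)\in[v,z]\}$ is a subinterval of $[0,d(v,w)]$ containing $0$, so it is of the form $[0,s]$ or $[0,s)$ for some $s$. To rule out the half-open case I would use that $[v,z]$ is closed in $T$ (it is isometric to a compact interval, hence compact, hence closed since $T$ is a metric space) and $\gamma$ is continuous, so $A$ is closed in $[0,d(v,w)]$; therefore $A=[0,s]$. Setting $x:=\gamma(s)$ gives $[v,w]\cap[v,z]=\gamma([0,s])=[v,x]$, as desired.

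The main obstacle is the connectedness claim, i.e.\ verifying carefully that $p\in[v,w]\cap[v,z]$ implies $[v,p]\subseteq[v,w]\cap[v,z]$. The subtle point is to argue that the initial segment of $\gamma$ up to the parameter value where it hits $p$ is genuinely the arc $[v,p]$ — a priori $\gamma$ restricted to $[0,t_p]$ is an injective path from $v$ to $p$ (injective because $\gamma$ is an isometry onto its image), and by unique arc-connectedness any injective path between two points traces out the unique arc between them; the same reasoning applies to $\sigma$ restricted to its initial segment up to $p$, so both initial segments equal $[v,p]$ and in particular each is contained in the other's full arc. Once this is in hand the rest is the elementary topology of intervals sketched above.
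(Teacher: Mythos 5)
Your proof is correct. Note that the paper itself states this lemma without proof (it is recalled as background from Hughes's paper \cite{Hug}), so there is no argument in the text to compare against; your argument is the standard one and fills the gap soundly: the observation that an injective continuous image of a compact interval is an arc, hence by unique arc-connectedness the initial segments of the parametrizations $\gamma$ and $\sigma$ up to a common point $p$ both equal $[v,p]$, gives the downward-closedness of $A=\gamma^{-1}([v,z])$, and compactness of $[v,z]$ closes $A$ so that its supremum $s$ is attained and $x=\gamma(s)$ works.
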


\begin{definicion} A \emph{rooted $\mathbb{R}$-tree}, $(T,v)$ is an
$\mathbb{R}$-tree (T,d) and a point $v\in T$ called \emph{the
root}.
\end{definicion}

\begin{definicion}\label{extensiongeod} A rooted $\mathbb{R}$-tree is
\emph{geodesically complete} if every isometric embedding
$f:[0,t]\rightarrow T,\ t>0$, with $f(0)=v$, extends to an
isometric embedding $\tilde{f}:[0,\infty) \rightarrow T$. In that
case we say that $[v,f(t)]$ can be extended to a \emph{geodesic
ray}.
\end{definicion}

\begin{nota} The single point $v$ is a trivial rooted
geodesically complete $\mathbb{R}$-tree.
\end{nota}

\textbf{Notation:} If $(T,v)$ is a rooted $\mathbb{R}$-tree and
$x\in T$, let $\| x \| = d(v,x)$,\linebreak
\begin{displaymath} B(v,r)=\{x\in T | \  \| x \|<r \} \linebreak
\end{displaymath}
\begin{displaymath}\bar{B}(v,r)=\{x\in T | \ \| x \| \leq r \} \linebreak
\end{displaymath}
\begin{displaymath}\partial B(v,r)=\{x\in T | \  \| x \|=r \}
\end{displaymath}
\vspace{0.5cm}
\begin{ejp} Cantor tree. Assume that each edge of the tree has
length 1.

\begin{figure}[h]
\centering
\scalebox{1}{\includegraphics{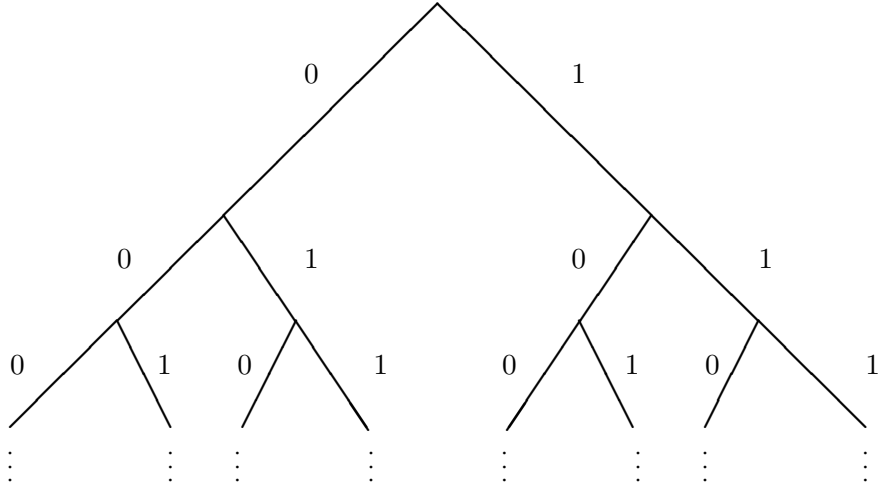}}
\caption{The Cantor tree.}
\end{figure}

\end{ejp}

\begin{ejp} $\{(x,y)\in \mathbb{R}^2 | x\geq 0 \mbox{ and } y=0, y=x \mbox{ or }
y=\frac{x}{2^n} \mbox{ with } n\in \mathbb{N}\}$

\begin{figure}[h]
\centering
\scalebox{1}{\includegraphics{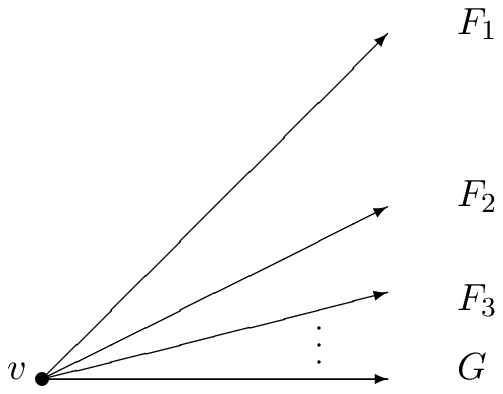}}
\caption{Non locally finite tree.}
\end{figure}

For any two points on different branches $x$,$y$ define
$d(x,y)=d_u(x,v)+d_u(v,y)$.
\end{ejp}

\begin{ejp} Consider $(\mathbb{R}^2,O)$ and for any two points non
aligned with the origin define the distance as follows
$d((x_1,x_2),(y_1,y_2))=\sqrt{x_1^2+x_2^2}+\sqrt{y_1^2+y_2^2}$.

\begin{figure}[h]
\centering
\scalebox{1}{\includegraphics{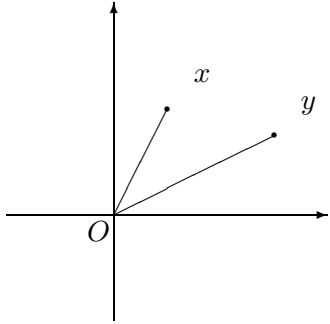}}
\caption{$\mathbb{R}$--tree which is not a $\mathbb{Z}$--tree.}
\end{figure}

\end{ejp}

\begin{definicion} If $c$ is any point of the rooted $\mathbb{R}$-tree
$(T,v)$, the \emph{subtree of $(T,v)$ determined by c} is:
\[T_c=\{x\in T | \ c\in [v,x]\}.\]

Also, let \[T_c^i=T_c\backslash \{c\}=\{x\in T | \ c\in [v,x]
\wedge x\neq c \}.\]

\end{definicion}

\begin{lema}\label{distanciasubarbol} If $(T,v)$ is a geodesically complete rooted
$\mathbb{R}$-tree, $T_c$ the subtree induced by any point $c$ and
$x\in (T,v)$ such that $x \not \in T_c$ then $\forall y\in T_c
\quad d(x,y)=d(x,c)+d(c,y)$.
\end{lema}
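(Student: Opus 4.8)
\emph{Proof plan.} The plan is to reduce everything to showing that $c$ lies on the arc $[x,y]$. Once that is known the statement is immediate: by the definition of an $\mathbb{R}$-tree, $[x,y]$ is isometric to $[0,d(x,y)]$, so a point $c$ of that arc splits it into two sub-arcs whose lengths are $d(x,c)$ and $d(c,y)$, and these add up to $d(x,y)$. (Geodesic completeness plays no role in this lemma; I would simply not use it.) The case $y=c$ is trivial, so I may assume $y\neq c$.

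To see that $c\in[x,y]$, I would first apply Lemma~\ref{arcosintersec} to the triple $v,x,y$, obtaining a point $p\in T$ with $[v,x]\cap[v,y]=[v,p]$. Since $y\in T_c$ we have $c\in[v,y]$, and since $x\notin T_c$ we have $c\notin[v,x]$, hence $c\notin[v,p]$. Because $p\in[v,y]$, the isometry of $[v,y]$ with an interval splits that arc at $p$ into an initial segment and a terminal segment which, by unique arcwise connectedness, are exactly the arcs $[v,p]$ and $[p,y]$; as $c\in[v,y]\setminus[v,p]$, this forces $c\in[p,y]$ and $c\neq p$.

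Finally I would identify $[x,y]$ with $[x,p]\cup[p,y]$: the sub-arcs $[p,x]\subseteq[v,x]$ and $[p,y]\subseteq[v,y]$ can only intersect inside $[v,x]\cap[v,y]=[v,p]$, and the same splitting argument applied to $[v,x]$ shows $[v,p]\cap[p,x]=\{p\}$, so $[p,x]\cap[p,y]=\{p\}$. Hence $[x,p]\cup[p,y]$ is an arc joining $x$ and $y$, which by unique arcwise connectedness must be $[x,y]$. Therefore $c\in[p,y]\subseteq[x,y]$, and the reduction of the first paragraph gives $d(x,y)=d(x,c)+d(c,y)$. The only genuine obstacle here is the elementary but slightly fiddly fact that a point $p$ lying on an arc $[a,b]$ of an $\mathbb{R}$-tree divides it into exactly the two sub-arcs $[a,p]$ and $[p,b]$, meeting only at $p$; granting this, everything else follows from Lemma~\ref{arcosintersec} and unique arcwise connectedness.
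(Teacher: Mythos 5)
Your proposal is correct and follows essentially the same route as the paper: reduce to showing $c\in[x,y]$, apply Lemma~\ref{arcosintersec} to $v,x,y$ to get the branch point (your $p$ is the paper's $z$), note $c\in[v,y]\setminus[v,x]$ forces $c\in[p,y]$, and conclude via $[x,y]=[x,p]\cup[p,y]$. The only difference is that you spell out the splitting facts the paper dismisses as clear, and you correctly observe that geodesic completeness is never used.
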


\begin{proof} It suffices to show that $c\in [x,y]$. Lemma
\ref{arcosintersec} implies that there exists $z\in (T,v)$ such
that $[v,x]\cap [v,y]=[v,z]$ and we start with $x\not \in T_c$,
that is, $c\not \in [v,x]$, in particular, $c\not \in [v,z]$ and
$c\in [z,y]$. It is clear that $[x,y]=[x,z]\cup[z,y]$ thus, $c\in
[x,y]$.
\end{proof}

\begin{lema} Let $(T,v)$ a geodesically complete rooted
$\mathbb{R}$-tree, $T_c$ the subtree induced by $c$ and $x\in
(T,v)$ such that $x \not \in T_c$ then $d(x,T_c)=d(x,c)$.
\end{lema}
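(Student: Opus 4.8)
The plan is to reduce everything to the previous lemma (Lemma~\ref{distanciasubarbol}), which already does all the geometric work. Recall that $d(x,T_c)=\inf\{d(x,y)\mid y\in T_c\}$, so I first want to understand the quantity $d(x,y)$ for an arbitrary $y\in T_c$. Since $x\notin T_c$, Lemma~\ref{distanciasubarbol} applies verbatim and gives $d(x,y)=d(x,c)+d(c,y)$ for every $y\in T_c$.

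Next I would take the infimum over $y\in T_c$ on both sides. The term $d(x,c)$ is a constant not depending on $y$, so it pulls out of the infimum, leaving $d(x,T_c)=d(x,c)+\inf\{d(c,y)\mid y\in T_c\}$. To finish I only need to check that this last infimum is $0$: indeed $d(c,y)\ge 0$ always, and $c\in T_c$ because $c\in[v,c]$, so the value $0=d(c,c)$ is actually attained. Hence $\inf\{d(c,y)\mid y\in T_c\}=0$ and therefore $d(x,T_c)=d(x,c)$.

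There is essentially no obstacle here; the only point that deserves a word is the membership $c\in T_c$, which is immediate from the definition of $T_c$, and the fact that Lemma~\ref{distanciasubarbol} is stated for exactly the hypotheses we are given (geodesic completeness, $x\notin T_c$), so it can be invoked without modification.
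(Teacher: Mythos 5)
Your argument is correct and is exactly the route the paper takes: the paper simply says the claim follows immediately from Lemma~\ref{distanciasubarbol}, and your write-up just spells out that deduction (constant term pulled out of the infimum, which is attained at $y=c\in T_c$). Nothing further is needed.
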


\begin{proof} It follows immediately from \ref{distanciasubarbol}.
\end{proof}

\begin{obs} \label{cerrado} Let $c$ be any point of the geodesically complete rooted
$\mathbb{R}$-tree $(T,v)$, then $T_c$ is closed.
\end{obs}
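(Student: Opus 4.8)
The plan is to prove, equivalently, that the complement $T\setminus T_c$ is open in $T$. I would fix an arbitrary point $x\in T\setminus T_c$ and produce an open ball centered at $x$ that is disjoint from $T_c$.

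The key (and essentially only) point worth isolating is that $c\in T_c$, since trivially $c\in[v,c]$. Consequently $x\neq c$, so $d(x,c)>0$. Now I would invoke the lemma immediately preceding this remark (equivalently, apply Lemma \ref{distanciasubarbol} directly): since $x\notin T_c$, for every $y\in T_c$ we have $d(x,y)=d(x,c)+d(c,y)\geq d(x,c)>0$. Hence the open ball $B(x,d(x,c))=\{z\in T\mid d(x,z)<d(x,c)\}$ meets no point of $T_c$, i.e. $B(x,d(x,c))\subseteq T\setminus T_c$.

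Since $x\in T\setminus T_c$ was arbitrary, $T\setminus T_c$ is open, so $T_c$ is closed. I do not expect any real obstacle here: the statement is an immediate corollary of the distance formula in Lemma \ref{distanciasubarbol}, and the only step that deserves to be written out explicitly is the strict inequality $d(x,c)>0$, which holds precisely because $c$ itself belongs to $T_c$ while $x$ does not. (The degenerate case $c=v$, where $T_c=T$, is trivially closed and can be dispatched in one line, or simply absorbed into the argument above since then $T\setminus T_c=\emptyset$.)
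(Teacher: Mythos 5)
Your proof is correct and follows essentially the same route as the paper: fix $x\notin T_c$, use the distance formula of Lemma \ref{distanciasubarbol} to see that $B(x,d(x,c))$ misses $T_c$, and conclude that the complement is open. The only (welcome) addition is your explicit justification that $d(x,c)>0$ because $c\in T_c$, which the paper leaves implicit.
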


Let $x \not \in T_c$ and $\epsilon=d(x,T_c)=d(x,c)>0$. By
\ref{distanciasubarbol}, $B(x,\epsilon)\cap T_c =\emptyset$. Hence
$T\backslash T_c$ is open.

\begin{obs} \label{abierto} Let $c$ be any point of the geodesically complete rooted
$\mathbb{R}$-tree $(T,v)$, then $T_c^i$ is open.
\end{obs}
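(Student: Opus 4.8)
The plan is to show that every point of $T_c^i$ is interior, by exhibiting an explicit radius for which the open ball stays inside $T_c^i$. Fix $x \in T_c^i$. By definition this means $c \in [v,x]$ and $x \neq c$, so $r := d(x,c) > 0$. I claim that $B(x,r) \subseteq T_c^i$, which gives the result.

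To prove the claim, take any $y$ with $d(x,y) < r = d(x,c)$. First, $y \neq c$, since otherwise $d(x,y) = d(x,c) = r$, contradicting the strict inequality. It remains to check that $y \in T_c$, i.e. $c \in [v,y]$. Suppose, for contradiction, that $y \notin T_c$. Note that $x \in T_c$ (indeed $x \in T_c^i \subseteq T_c$). Then Lemma \ref{distanciasubarbol}, applied with the roles of the points exchanged (it is symmetric in the sense that it concerns a point outside $T_c$ and a point inside $T_c$), yields $d(y,x) = d(y,c) + d(c,x) \geq d(c,x) = r$, contradicting $d(x,y) < r$. Hence $y \in T_c$, and together with $y \neq c$ this shows $y \in T_c^i$.

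I expect no serious obstacle here: the statement is essentially an immediate corollary of Lemma \ref{distanciasubarbol}, in the same spirit as Remark \ref{cerrado}. The only point requiring a little care is the bookkeeping about which point lies inside $T_c$ and which lies outside when invoking Lemma \ref{distanciasubarbol}, and the observation that the chosen radius $r = d(x,c)$ simultaneously rules out $y = c$ and forces $y \in T_c$. Geodesic completeness is used only insofar as it is a hypothesis of Lemma \ref{distanciasubarbol}.
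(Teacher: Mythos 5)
Your proof is correct and follows essentially the same route as the paper: the paper also takes the ball $B(x,\epsilon)$ with $\epsilon=d(x,c)$ and asserts $B(x,\epsilon)\subset T_c^i$, which is exactly the containment you verify (the paper leaves the verification implicit, while you justify it via Lemma \ref{distanciasubarbol}).
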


Let $x\in T_c^i$ ($c\in [v,x]$ and $x\neq c$) and
$\epsilon=d(x,c)>0$. Then $B(x,\epsilon)\subset T_c^i$ and hence,
$T_c^i$ is open.

\begin{lema} \label{subarbolinm} If $f:[0,\infty)\to (T,v)$ is an isometric embedding
such that $f(0)=v$, then $\forall t_0 \in [0,\infty)$ \quad
$f[t_0,\infty) \subset T_{f(t_0)}$.
\end{lema}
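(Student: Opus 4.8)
The plan is to fix $t_0 \in [0,\infty)$ and an arbitrary parameter $t \ge t_0$, and show that $f(t) \in T_{f(t_0)}$, i.e.\ that $f(t_0) \in [v, f(t)]$. Since $f$ is an isometric embedding of $[0,\infty)$ with $f(0)=v$, the image $f([0,t])$ is an arc from $v$ to $f(t)$ which is isometric to $[0,t]$; by the uniqueness of arcs in an $\mathbb{R}$-tree (Definition of real tree), this arc must coincide with $[v,f(t)]$, and the isometry $[0,t] \to [v,f(t)]$ it carries is the one sending $s \mapsto f(s)$. In particular $f(t_0)$, being the image of the point $t_0 \in [0,t]$, lies on $[v,f(t)]$, which is exactly the condition $f(t) \in T_{f(t_0)}$.

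The one point that needs a little care is the identification of $f([0,t])$ with the unique arc $[v,f(t)]$: an isometric embedding of an interval is automatically an injective continuous map whose image is arcwise connected, and an $\mathbb{R}$-tree is \emph{uniquely} arcwise connected, so any arc joining $v$ to $f(t)$ inside $f([0,t])$ must be all of $f([0,t])$ (there is no room for a shorter sub-arc, as removing an interior point would disconnect it). This is where the defining property of $\mathbb{R}$-trees does the work; everything else is bookkeeping about parameters. I expect no real obstacle here — the main subtlety is simply being careful that ``the unique arc from $v$ to $f(t)$'' and ``the image $f([0,t])$'' are literally the same subset of $T$, after which membership of $f(t_0)$ is immediate. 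Since $t \ge t_0$ was arbitrary, $f([t_0,\infty)) \subset T_{f(t_0)}$, as claimed.
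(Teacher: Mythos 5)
Your proposal is correct and follows the same route as the paper: the paper simply states that for $t>t_0$ the point $f(t_0)$ ``clearly'' lies on $[v,f(t)]$, whereas you justify that step by identifying $f([0,t])$ with the unique arc $[v,f(t)]$ via unique arcwise connectedness. This is just a fuller writing-out of the paper's one-line argument, and it is sound.
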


\begin{proof} Clearly $\forall t>t_0$, $f(t_0)$ must be in $[v,f(t)]$. Hence
$f(t)\in T_{f(t_0)}$. \end{proof}

\begin{obs} \label{subarbol} Let $c\in (T,v)$ a geodesically complete rooted
$\mathbb{R}$-tree, then $T_c$ is also a geodesically complete
rooted $\mathbb{R}$-tree.
\end{obs}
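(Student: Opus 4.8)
The plan is to show that $(T_c, c)$, with the restricted metric and the root $c$, satisfies the two defining properties: it is an $\mathbb{R}$-tree (uniquely arcwise connected with arcs isometric to real intervals), and it is geodesically complete with respect to the root $c$. For the $\mathbb{R}$-tree part, note that $T_c\subset T$ and $T$ is uniquely arcwise connected; the key observation is that for $x,y\in T_c$ the arc $[x,y]$ computed in $T$ is already contained in $T_c$. This is immediate from Lemma~\ref{arcosintersec}: there is $z$ with $[v,x]\cap[v,y]=[v,z]$, and since $c\in[v,x]$ and $c\in[v,y]$ we get $c\in[v,z]$, hence $c\in[v,z]\subset[v,x]$ and $c\in[v,z]\subset[v,y]$; writing $[x,y]=[x,z]\cup[z,y]$ and using $[x,z]\subset[v,x]$, $[z,y]\subset[v,y]$ (the backward portions), one sees every point $w$ of $[x,y]$ has $c\in[v,w]$, so $w\in T_c$. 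Thus arcs in $T_c$ coincide with arcs in $T$, so they are unique and isometric to subintervals of $\mathbb{R}$; in particular $T_c$ is arcwise connected (for any $x\in T_c$ the arc $[c,x]\subset[v,x]$ lies in $T_c$).

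Next I would verify geodesic completeness of $(T_c,c)$. Let $g:[0,t]\to T_c$ be an isometric embedding with $g(0)=c$ and $t>0$. Extend it to the left by prepending the segment $[v,c]$: define $h:[0,\|c\|+t]\to T$ by $h(s)=$ the point of $[v,c]$ at distance $s$ from $v$ for $s\le\|c\|$, and $h(s)=g(s-\|c\|)$ for $s\ge\|c\|$. One checks $h$ is an isometric embedding with $h(0)=v$: on each of the two pieces it is isometric, and for $s\le\|c\|\le s'$ we have $d(h(s),h(s'))=d(h(s),c)+d(c,h(s'))$ because $c\in[v,g(s'-\|c\|)]$ (as $g$ takes values in $T_c$), which equals $(\|c\|-s)+(s'-\|c\|)=s'-s$. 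By geodesic completeness of $(T,v)$, $h$ extends to an isometric embedding $\tilde h:[0,\infty)\to T$ with $\tilde h(0)=v$. Restrict to $[\|c\|,\infty)$ and reparametrise: $\tilde g(s):=\tilde h(s+\|c\|)$ gives an isometric embedding $[0,\infty)\to T$ with $\tilde g(0)=c$ extending $g$. Finally $\tilde g$ takes values in $T_c$: by Lemma~\ref{subarbolinm} applied to $\tilde h$ at the parameter $\|c\|$, we have $\tilde h[\|c\|,\infty)\subset T_{\tilde h(\|c\|)}=T_c$. Hence $(T_c,c)$ is geodesically complete.

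The one point needing a little care — and the only real obstacle — is the compatibility of the root: geodesic completeness as defined requires extensions of rays emanating from the root $c$, whereas the completeness we are given is for rays from $v$. The trick above (extend backward along $[v,c]$, apply completeness in $T$, then chop off and reparametrise) resolves this, and Lemma~\ref{subarbolinm} is exactly what guarantees the extended ray stays inside $T_c$. Everything else — unique arcwise connectedness, isometry of arcs to intervals — is inherited from $T$ once we know arcs between points of $T_c$ stay in $T_c$, which is the Lemma~\ref{arcosintersec} computation above. So the proof is short: identify arcs in $T_c$ with arcs in $T$, then push a root-based extension problem in $T_c$ through $T$ via the segment $[v,c]$.
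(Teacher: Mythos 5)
Your proposal is correct and follows essentially the same route as the paper: prepend the segment $[v,c]$ to a root-based geodesic in $T_c$, extend in $T$ by geodesic completeness, use Lemma~\ref{subarbolinm} to see the extension stays in $T_c$, and reparametrise. Your verification that arcs between points of $T_c$ remain in $T_c$ (via Lemma~\ref{arcosintersec}) is simply a more detailed version of the inheritance argument the paper states briefly.
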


$T_c$ is a metric space since it is a subset of a metric space. It
is clear that any point in $T_c$ is connected with $c$ by an arc,
so, any two points in $T_c$ are connected by an arc which is
obviously unique since $T_c$ is a subset of $(T,v)$ which is
uniquely arcwise connected.

We take $c$ as the root of $T_c$.

Let $f:[0,t_0] \to T_c$ any isometric embedding such that
$f(0)=c$. We consider the isometric embedding $f':[0,t_0+\|c\|]
\to T$ such that $f'(0)=v$, $f'(\|c\|)=c$ and $f'(t+\|c\|)=f(t)$.
$f'$ extends f and, by definition of geodesically complete, there
exists an isometric embedding $\tilde{f'}[0,\infty) \to T$ such
that $\tilde{f'}$ extends $f'$. $\tilde{f'}(\|c\|)=c$ and by lemma
\ref{subarbolinm} $\tilde{f'}[\|c\|,\infty)\subset T_c$. If we
define $\tilde{f}(t)=\tilde{f'}(t+\|c\|)$ it is readily seen that
$\tilde{f}:[0,\infty) \to T_c$ is an isometric embedding and
extends $f$ in $T_c$.

\begin{definicion} A \emph{cut set} for a geodesically complete
rooted $\mathbb{R}$-tree $(T,v)$ is a subset $C$ of $(T,v)$ such
that $v \not \in C$ and for every isometric embedding
$\alpha:[0,\infty) \to T$ with $\alpha (0)=v$ there exists a
unique $t_0>0$ such that $\alpha(t_0) \in C$.
\end{definicion}

\begin{ejp} \label{ejpcorte} $\partial B(v,r)$ with $r>0$, is a \emph{cut set} for
$(T,v)$.
\end{ejp}

\begin{prop}\label{componentesconexas} Given a cut set $C$ for $(T,v)$,
the connected components of $T(C):=\{x\in T | \ [v,x]\cap C\neq
\emptyset \}$ (that is, the part of $(T,v)$ not between the root
and the cut set) are exactly the subtrees $\{T_c\}_{c\in C}$.
\end{prop}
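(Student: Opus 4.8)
The plan is to prove the two inclusions that constitute the identity of sets, namely that each $T_c$ (for $c \in C$) is a connected component of $T(C)$, and conversely that every connected component of $T(C)$ arises in this way. First I would observe that each $T_c$ is contained in $T(C)$: if $x \in T_c$ then $c \in [v,x]$ and $c \in C$, so $[v,x] \cap C \neq \emptyset$. Moreover, by Remark \ref{subarbol} each $T_c$ is itself a geodesically complete rooted $\mathbb{R}$-tree, hence in particular arcwise connected, so each $T_c$ lies inside a single connected component of $T(C)$.

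Next I would show that the family $\{T_c\}_{c\in C}$ covers $T(C)$. Take $x \in T(C)$, so there is some point of $C$ on the arc $[v,x]$; using the cut set property applied to a geodesic ray $\alpha$ through $x$ (which exists by geodesic completeness, extending $[v,x]$), that point is the unique $c = \alpha(t_0) \in C$, and since $c \in [v,x]$ we get $x \in T_c$. Thus $T(C) = \bigcup_{c\in C} T_c$.

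The key remaining step is to show the $T_c$ are pairwise disjoint and, more importantly, that they are ``separated'' enough to be the full connected components — i.e. that $T_c$ is clopen in $T(C)$ for each $c$. Disjointness follows from uniqueness in the cut set definition: if $x \in T_c \cap T_{c'}$ then both $c$ and $c'$ lie on $[v,x]$ and hence on a common geodesic ray through $x$, forcing $c = c'$. For the topological separation, I would use that $T_c$ is closed in $T$ (Remark \ref{cerrado}), hence closed in $T(C)$; and for openness in $T(C)$, given $x \in T_c$, I want a ball around $x$ meeting $T(C)$ only inside $T_c$. The natural radius is $\epsilon = d(x,c)$ if $x \neq c$, giving $B(x,\epsilon) \subset T_c^i \subset T_c$ by Remark \ref{abierto}; the delicate case is $x = c$ itself, where any ball around $c$ contains points $y$ with $c \notin [v,y]$, and I must check such $y$ are not in $T(C)$ — this is where the argument is subtle, since a small perturbation off $c$ toward the root could still hit another cut point. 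Here I would argue: if $y \in B(c,\epsilon) \setminus T_c$ for small $\epsilon$, then the arc $[v,y]$ branches off $[v,c]$ before reaching $c$, say at a point $z = \alpha(s)$ with $s < t_0 = \|c\|$; extending $[v,y]$ to a geodesic ray $\beta$ with $\beta(0)=v$, the unique cut point on $\beta$ is some $\beta(t_1) \in C$, and I need $\beta(t_1) \notin [v,y]$, which will hold once $\epsilon$ is smaller than the distance from $c$ to the ``nearest'' relevant cut point behavior — more carefully, since the only cut point met by $\alpha$ extending $[v,c]$ is $c$ at parameter $t_0$, and cut points on rays through $z$ other than via $c$ occur at parameters that stay bounded away appropriately, a sufficiently small ball works.

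I expect the main obstacle to be exactly this last point: verifying that $T_c$ is open in $T(C)$ at the root point $x = c$, which requires ruling out nearby cut points on branches just below $c$. The cleanest route is probably to observe that $T(C) = \bigcup_{c \in C} T_c$ is a disjoint union of sets each of which is closed in $T(C)$, and that a partition of a topological space into closed sets, indexed so that each element is also a subtree (connected), yields the connected components provided one also checks there are no accumulation points of one $T_c$ inside another — equivalently, that the complement of $T_c$ in $T(C)$, being $\bigcup_{c'\neq c} T_{c'}$, is also closed in $T(C)$; this amounts to showing each $x \in T_c$ has a neighborhood in $T$ disjoint from all $T_{c'}$ with $c' \neq c$, which near $x = c$ reduces to the cut set uniqueness forcing any competing cut point $c'$ to satisfy $d(c,c') \geq$ some positive amount along the relevant branch, so a small enough ball suffices. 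Once the $T_c$ are shown clopen and connected in $T(C)$, they are precisely the connected components.
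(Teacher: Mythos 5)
Your reduction of the proposition to showing that each $T_c$ is clopen in $T(C)$ is where the argument breaks down: $T_c$ need \emph{not} be open in $T(C)$, and the claim you rely on --- that every $x\in T_c$, in particular $x=c$ itself, has a ball in $T$ missing all the other subtrees $T_{c'}$ --- is false, because the definition of a cut set imposes no positive lower bound on $d(c,c')$ for $c'\neq c$. Concretely, let $T$ consist of a ray $F$ issuing from $v$ together with rays $G_n$ ($n\geq 2$) that coincide with $F$ exactly on $[0,1-\tfrac{1}{n}]$ and then branch off, and take $C=\{F(1)\}\cup\{G_n(1)\ :\ n\geq 2\}$. Every geodesic ray from $v$ is $F$ or some $G_n$ and meets $C$ in exactly one point, so $C$ is a cut set; but $d(F(1),G_n(1))=\tfrac{2}{n}\to 0$, so every ball around $c_0:=F(1)$ contains points $G_n(1)\in T(C)\setminus T_{c_0}$. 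Hence $T_{c_0}$ is not open in $T(C)$ (equivalently, $\bigcup_{c'\neq c_0}T_{c'}$ is not closed in $T(C)$), and your ``a sufficiently small ball works'' cannot be salvaged. The proposition is nevertheless true in this example --- the component of $c_0$ is still $T_{c_0}$ --- which shows the route through openness is not merely incompletely justified but the wrong mechanism: components of $T(C)$ need not be open, because $T(C)$ need not be locally connected at the cut points.

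The paper avoids this by a quasi-component argument in which the separating clopen set depends on the pair of cut points: given $c_0\neq c'$, let $w$ be defined by $[v,c_0]\cap[v,c']=[v,w]$ and pick $x\in[w,c_0]$ with $x\neq c_0$; then $x\notin T(C)$ (the ray extending $[v,c_0]$ meets $C$ only at $c_0$), and $T_x^i\cap T(C)$ is clopen in $T(C)$, contains $T_{c_0}$ and misses $T_{c'}$. Letting $c'$ vary shows the quasi-component, hence the component, of $c_0$ is contained in $T_{c_0}$, and equality follows from the connectedness of $T_{c_0}$. (An alternative repair closer to your outline: a connected subset of an $\mathbb{R}$-tree contains the arc between any two of its points, since deleting an interior point of that arc disconnects $T$ with the two points on different sides; so a connected subset of $T(C)$ meeting both $T_{c_0}$ and $T_{c'}$ would contain points of $[w,c_0)$, which lie outside $T(C)$.) The parts of your proposal establishing the covering $T(C)=\bigcup_{c\in C}T_c$, pairwise disjointness, connectedness, and closedness of each $T_c$ are correct; only the openness step fails.
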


\begin{proof} $T(C)=\underset{c\in C}{\cup} T_c$ and we know that $T_c$ is
always connected (as it is in fact arcwise connected). Let's see
that for any $c_0\in C$, the connected component of $c_0$ in
$T(C)$ is $T_{c_0}$.

If we remove from the tree any point $x\in (T,v)$ we disconnect
the tree in two subsets: $T_x^i$ and $T\backslash T_x$ which are
open sets in $(T,v)$, as we saw in lemmas \ref{cerrado} and
\ref{abierto}, and it is easy to verify that $T_x^i$ is clopen in
$T\backslash \{x\}$. (Note that $T_x^i$ need not be connected but
we may remark that $T_x^i$ is a union of connected components of
an open set and these are open since $(T,v)$ is locally
connected).

Let $c'\in C$ such that $c'\neq c_0$ and $w\in (T,v)$ such that
$[v,c_0]\cap [v,c']=[v,w]$. Consider $x\in [w,c_0]$ such that
$x\neq c_0$ and by definition of cut set it is clear that $x\not
\in T(C)$ and $T_x^i \cap T(C)$ is a clopen set in $T(C)$ that
contains $T_{c_0}$ and $T_x^i \cap T_{c'}=\emptyset$. The
intersection of all the clopen sets that contain $T_{c_0}$ (we
already know that $T_{c_0}$ is connected), is the quasi-component
of $T_{c_0}$ which contains the connected component and doesn't
intersect any other subtree $T_{c'}$ induced by any other point of
the cut set. Hence, the connected component of $c_0$ is exactly
$T_{c_0}$. \end{proof}

\begin{nota} \label{notacomponentes} If we consider the cut set in $(T,v)$
$C:=\partial B(v,r)$ with $r>0$, then $T(C)$ is exactly
$T\backslash B(v,r)$.
\end{nota}

\section{Metrically proper maps between trees}

Here we used \cite{Roe1} and \cite{Roe2} for the main concepts.

\begin{definicion}
A map $f$ between two metric spaces $X, \ X'$ is \emph{metrically
proper} if for any bounded set $A$ in $X'$, \quad $f^{-1}(A)$ is
bounded in $X$.
\end{definicion}

\begin{definicion}
A map $f$ between two rooted $\mathbb{R}$-trees, $f:(T,v)\to
(T',w)$, is said to be \emph{rooted} if $f(v)=w$.
\end{definicion}

To avoid repeating the expression: rooted, continuous and
metrically proper map we define metrically proper between trees as
follows.

\begin{definicion}
A map $f$ between two rooted $R$-trees is \emph{metrically proper
between trees} if it is rooted, metrically proper and continuous.
\end{definicion}

\begin{nota}
If $f:(T,v)\to (T',w)$ is a metrically proper map between trees,
then:
\[\forall M>0 \quad \exists N>0 \  \mbox{such that} \quad f^{-1}(B(w,M)) \subset
B(v,N) .\]

This is equivalent to say that $f(T \backslash B(v,N)) \subset T'
\backslash B(w,M).$
\end{nota}

\begin{prop}\label{imagensubarboles}
Let $f:(T,v) \to (T',w)$ be a metrically proper map between trees,
and let $M>0$ and $N>0$ such that \quad $f^{-1}(B(w,M)) \subset
B(v,N)$, then
\[\forall c\in \ \partial B(v,N) \ \exists ! \ c' \in
\partial B(w,M) \ \mbox{such that} \quad f(T_c)\subset T'_{c'}.\]
\end{prop}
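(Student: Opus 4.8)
The plan is to show that $f(T_c)$ is a connected subset of $T' \setminus B(w,M)$ and then invoke Proposition \ref{componentesconexas} to locate it inside a unique subtree $T'_{c'}$ with $c' \in \partial B(w,M)$. First I would fix $c \in \partial B(v,N)$. Since $T_c$ is connected (indeed arcwise connected, as noted in the proof of Proposition \ref{componentesconexas}) and $f$ is continuous, the image $f(T_c)$ is a connected subset of $T'$. Next I would check that $f(T_c) \subset T' \setminus B(w,M)$: every $x \in T_c$ satisfies $c \in [v,x]$, hence $\|x\| \geq \|c\| = N$, so $x \notin B(v,N)$; by the hypothesis $f^{-1}(B(w,M)) \subset B(v,N)$ (equivalently $f(T \setminus B(v,N)) \subset T' \setminus B(w,M)$, as in the Remark preceding this proposition), we get $f(x) \notin B(w,M)$.

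Now take the cut set $C' := \partial B(w,M)$ in $(T',w)$, which is a cut set by Example \ref{ejpcorte}. By Remark \ref{notacomponentes}, $T'(C') = T' \setminus B(w,M)$, and by Proposition \ref{componentesconexas} the connected components of $T' \setminus B(w,M)$ are exactly the subtrees $\{T'_{c'}\}_{c' \in C'}$. Since $f(T_c)$ is a connected subset of $T' \setminus B(w,M)$, it is contained in exactly one connected component, i.e. there is a unique $c' \in \partial B(w,M)$ with $f(T_c) \subset T'_{c'}$. Uniqueness is immediate because distinct subtrees $T'_{c'}$, $T'_{c''}$ (with $c', c''$ in the cut set) are disjoint, again by Proposition \ref{componentesconexas}, so $f(T_c)$, being nonempty, cannot sit inside two of them.

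The only point requiring a little care — and the place I would expect to be the main obstacle — is making sure $T_c$ is genuinely connected so that its continuous image lands in a single component; but this is already established, since $T_c$ is arcwise connected as a subtree (Remark \ref{subarbol}), so there is no real obstacle here and the argument is essentially a direct assembly of the preceding results. One should also note that $T_c$ is nonempty (it contains $c$), which is what guarantees that the component $c'$ actually exists rather than the containment holding vacuously.
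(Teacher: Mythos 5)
Your proposal is correct and follows essentially the same route as the paper's own proof: you show $f(T_c)$ is a connected subset of $T'\setminus B(w,M)$ and then use Proposition \ref{componentesconexas} (with Example \ref{ejpcorte} and Remark \ref{notacomponentes}) to conclude it lies in exactly one subtree $T'_{c'}$, $c'\in\partial B(w,M)$. The extra details you spell out (that $\|x\|\geq N$ for $x\in T_c$, and that $T_c\neq\emptyset$) are fine refinements of the same argument.
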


\begin{proof} Let $f:T \rightarrow T'$ be a metrically proper map between
trees, then \quad \mbox{$\forall M>0 \quad \exists N>0$ such that
\quad $f^{-1}(B(w,M)) \subset B(v,N) \Longrightarrow$} $f(T
\backslash B(v,N)) \subset T' \backslash B(w,M).$ $f$ sends
connected components of $T \backslash B(v,N)$ into connected
components of $T' \backslash B(w,M)$.

In particular, $\forall c\in \
\partial B(v,N) \quad f(T_c)\subset T'\backslash B(w,M)$. As it is
a continuous image of a connected set, is clearly contained in one
of the connected components of $T'\backslash B(w,M)$, and those
are, as we saw in proposition \ref{componentesconexas} and
\ref{ejpcorte}, the subtrees determined by points of the cut set
$\partial B(w,M).$ \end{proof}

        \paragraph {Equivalence relation on metrically proper maps between
        trees\quad }

In this paragraph we introduce an equivalence relation and the
resulting equivalence classes form the morphisms of the category
whose objets are geodesically complete rooted $\mathbb{R}$-trees.
We are going to define this equivalence relation in two steps,
first we are going to put it in terms of maps restricted to
complements of closed balls centered at the root, and using that,
we will demonstrate that the relation is in fact a metrically
proper homotopy. The interest of this equivalence class is that
two maps will be in the same class if and only if they induce the
same map between the end spaces of the trees (that will be
uniformly continuous as we shall see).

Let $M>0, \ N>0$ \ be such that \ $f(T \backslash B(v,N)) \subset
T' \backslash B(w,M) $ and $\forall c \in \partial B(v,N) $ let
$T_c$ the subtree determined by $c$. By proposition
\ref{imagensubarboles}, $\exists ! c' \in
\partial B(w,M)$ \ such that \ $f(T_c)\subset
T_{c'}$. \vspace{0.5cm}

This allows us to consider a map which sends the subtrees of $T
\backslash B(v,N)$ to subtrees of $T' \backslash B(w,M)$ as
follows.

\begin{definicion}\label{aplic de subarboles} Given $\mathcal{T}_N:=\{T_c| \ c\in
\partial B(v,N)\}$, let $f_{\mathcal{T}_N}:\mathcal{T}_N
\longrightarrow \mathcal{T'}_M$ such that
$f_{\mathcal{T}_N}(T_c)=T_{c'} \Leftrightarrow f(T_c)\subset
T_{c'}$.
\end{definicion}

This map can be defined from a certain $N_0$ (which depends on
$M$), and for all $N>N_{0}$. If $N>N_0$, then $\forall d \in
\partial B(v,N)$ there exists a unique $c\in \partial
B(v,N_0)$ such that $T_d \subset T_c$, and obviously
\[f(T_d)\subset f(T_c)\subset T'_{c'} \Rightarrow
f_{\mathcal{T}_{N'}}(T_d)=T'_{c'}.\]

\begin{definicion} \label{relequiv} Given \mbox{$f, f':(T,v) \rightarrow (T',w)$}
two metrically proper maps between trees, then
\[f\sim f' \Leftrightarrow \forall M>0, \quad \exists N_0>0 \
\mbox{such that} \quad \forall N>N_0 \quad
f_{\mathcal{T}_N}=f'_{\mathcal{T}_N}.\]
\end{definicion}

\begin{prop} $\sim$ defines an equivalence relation.
\end{prop}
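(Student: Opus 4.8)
The plan is to verify the three defining properties of an equivalence relation --- reflexivity, symmetry, and transitivity --- directly from Definition \ref{relequiv}, exploiting the fact that the condition is phrased in terms of the auxiliary maps $f_{\mathcal{T}_N}$ introduced in Definition \ref{aplic de subarboles}.

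\begin{proof} Reflexivity is immediate: for any metrically proper map $f$ between trees and any $M>0$, taking $N_0$ to be any threshold for which $f_{\mathcal{T}_N}$ is defined for all $N>N_0$ (such $N_0$ exists by the discussion following Definition \ref{aplic de subarboles}), we trivially have $f_{\mathcal{T}_N}=f_{\mathcal{T}_N}$ for all $N>N_0$, so $f\sim f$.

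Symmetry is equally direct, since the defining condition $f_{\mathcal{T}_N}=f'_{\mathcal{T}_N}$ is symmetric in $f$ and $f'$: if for every $M>0$ there is $N_0>0$ with $f_{\mathcal{T}_N}=f'_{\mathcal{T}_N}$ for all $N>N_0$, then the same $N_0$ witnesses $f'\sim f$.

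For transitivity, suppose $f\sim f'$ and $f'\sim f''$. Fix $M>0$. By $f\sim f'$ there is $N_1>0$ such that $f_{\mathcal{T}_N}=f'_{\mathcal{T}_N}$ for all $N>N_1$, and by $f'\sim f''$ there is $N_2>0$ such that $f'_{\mathcal{T}_N}=f''_{\mathcal{T}_N}$ for all $N>N_2$. Setting $N_0=\max\{N_1,N_2\}$, for every $N>N_0$ both equalities hold, hence $f_{\mathcal{T}_N}=f''_{\mathcal{T}_N}$; thus $f\sim f''$. Therefore $\sim$ is an equivalence relation.
\end{proof}

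The only mild subtlety, and the step I would be most careful about, is making sure that for each map the auxiliary family $f_{\mathcal{T}_N}$ is genuinely defined for all sufficiently large $N$ (so that the quantifier ``$\forall N>N_0$'' is not vacuous over an empty range and the composed equalities make sense simultaneously for $f$, $f'$, $f''$); this is exactly what the paragraph after Definition \ref{aplic de subarboles} guarantees, so no real obstacle arises --- the proof is essentially a formal manipulation of nested $\exists\forall$ quantifiers.
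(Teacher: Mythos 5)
Your proof is correct and follows essentially the same route as the paper's: reflexivity and symmetry are immediate from the definition, and transitivity is obtained for each fixed $M$ by taking the maximum of the two thresholds so that both equalities of the maps $f_{\mathcal{T}_N}$ hold simultaneously. Your extra remark that $f_{\mathcal{T}_N}$ is defined for all sufficiently large $N$ is a reasonable (if brief) point of care that the paper leaves implicit.
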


\begin{proof} It is obviously \underline{reflexive} and
\underline{symmetric}.

\underline{Transitive}: If $f\sim f'$ and $f'\sim f''$ then there
exists $N_0$ such that $\forall N>N_0 \quad
f_{\mathcal{T}_N}=f'_{\mathcal{T}_N}$ and exists $N_1$ such that
$\forall N>N_1 \quad f'_{\mathcal{T}_N}=f''_{\mathcal{T}_N}$
hence, for all $N>max\{N_0,N_1\}$ we may check that $f\sim f''$.
\end{proof}

\begin{definicion} If $f,g:X\to T$ are two continuous maps from any
topological space $X$ to a tree $T$ then the \emph{shortest path
homotopy} is an homotopy $H:X\times I \to T$ of $f$ to $g$ such
that if $j_x:[0,d(f(x),g(x))]\to [f(x),g(x)]$ is the isometric
immersion of the subinterval $[0,d(f(x),g(x))]\subset \mathbb{R}$
into $T$ whose image is the shortest path between $f(x)$ and
$g(x)$, then $H(x,t)=j_x(t\cdot d(f(x),g(x))) \ \forall t \in I \
\forall x\in X$.
\end{definicion}

\begin{lema}\label{short homotopy} If $f,g:X\to T$ are two continuous maps from any
topological space $(X,${\Large $\tau$}$)$ to a tree $T$ then there
is a shortest path homotopy, $H:X\times I \to T$ of $f$ to $g$.
\end{lema}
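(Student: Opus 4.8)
The plan is to construct $H$ explicitly by the formula dictated in the definition of the shortest path homotopy and then verify that this single formula already gives a continuous map $H\colon X\times I\to T$. For each $x\in X$ write $\rho(x)=d(f(x),g(x))$ and let $[f(x),g(x)]$ be the unique arc joining the two points, with $j_x\colon[0,\rho(x)]\to T$ the isometric parametrization of that arc starting at $f(x)$. Then simply set $H(x,t)=j_x\bigl(t\,\rho(x)\bigr)$. By construction $H(x,0)=j_x(0)=f(x)$ and $H(x,1)=j_x(\rho(x))=g(x)$, so the only thing in question is continuity, and that is the whole content of the lemma.

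For continuity I would argue as follows. First note that $\rho=d\circ(f,g)\colon X\to[0,\infty)$ is continuous, being a composition of continuous maps. Fix $(x_0,t_0)\in X\times I$ and let $p_0=H(x_0,t_0)$. The key geometric fact is that in an $\mathbb{R}$-tree the point $H(x,t)$ is characterized metrically: using Lemma~\ref{arcosintersec} applied to the three points $f(x)$, $g(x)$ and $p_0$, together with the fact that arcs in $T$ are isometric to real intervals, one sees that $H(x,t)$ is the unique point on $[f(x),g(x)]$ at distance $t\rho(x)$ from $f(x)$ and distance $(1-t)\rho(x)$ from $g(x)$. I would then estimate $d\bigl(H(x,t),H(x_0,t_0)\bigr)$ directly. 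Writing $m$ for the median point $[f(x_0),g(x_0)]\cap[f(x),f(x_0)]\cap\dots$ obtained from Lemma~\ref{arcosintersec}, one gets a bound of the shape
\[
d\bigl(H(x,t),H(x_0,t_0)\bigr)\ \le\ d\bigl(f(x),f(x_0)\bigr)+d\bigl(g(x),g(x_0)\bigr)+\bigl|t\rho(x)-t_0\rho(x_0)\bigr|,
\]
where the last term is small because $\rho$ is continuous and multiplication $[0,1]\times[0,\infty)\to[0,\infty)$ is continuous, and the first two terms are small by continuity of $f$ and $g$. Hence given $\varepsilon>0$ one finds a neighbourhood $U$ of $x_0$ in $(X,\tau)$ and $\delta>0$ so that on $U\times(t_0-\delta,t_0+\delta)$ the displacement of $H$ from $p_0$ is below $\varepsilon$, proving continuity at $(x_0,t_0)$.

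The main obstacle is making the estimate above rigorous, i.e.\ proving the claimed inequality in a general $\mathbb{R}$-tree. This is where one must use the tree structure carefully: the four-point (or repeated three-point) application of Lemma~\ref{arcosintersec} to locate the branch point through which the arcs $[f(x),g(x)]$ and $[f(x_0),g(x_0)]$ "reconnect," and then the observation that once two points lie on a common arc their distance is just the difference of the arc-length parameters. A clean way to package this is to first prove the auxiliary claim that the map sending a pair $(a,b)\in T\times T$ together with $s\in[0,1]$ to the point of $[a,b]$ that divides it in ratio $s:(1-s)$ is continuous on $T\times T\times[0,1]$; this is a purely metric statement about $\mathbb{R}$-trees, provable from Lemma~\ref{arcosintersec} alone, and the lemma then follows by precomposing with the continuous map $x\mapsto(f(x),g(x))$ and $t\mapsto t$. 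Everything else is routine.
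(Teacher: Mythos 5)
Your proposal is correct and takes essentially the same approach as the paper: both define $H$ by the explicit formula and prove continuity at an arbitrary $(x_0,t_0)$ via a metric estimate showing that the point dividing $[f(x),g(x)]$ proportionally moves little when the endpoints and the parameter move little; the paper organizes this in two steps (varying $x$ at fixed $t_0$, then varying $t$ using a local bound $K$ on $d(f(x),g(x))$), while you fold it into the single additive inequality $d(H(x,t),H(x_0,t_0))\le d(f(x),f(x_0))+d(g(x),g(x_0))+|t\rho(x)-t_0\rho(x_0)|$, which is indeed valid in an $\mathbb{R}$-tree. Both arguments leave the underlying tree-geometric estimate at the level of a sketch (the paper says it is ``immediate to check''), so your write-up matches the paper in both method and rigor.
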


\begin{proof} It suffices to prove that $H$ with the definition
above is continuous. Consider $(x_0,t_0)\in X\times I$. The
continuity of $f$ and $g$ implies that $\forall \epsilon>0$ there
exists $x_0\in U\in$ {\Large $\tau$} such that $f(U)\subset
B_T(f(x_0),\frac{\epsilon}{2})$ and $g(U)\subset
B_T(g(x_0),\frac{\epsilon}{2})$. It is immediate to check that
this implies that $H(U,t_0)\subset
B_{T}(H(x_0,t_0),\frac{\epsilon}{2})$. Let $K$ be such that
$d(f(x),g(x))<K \quad \forall x \in U$. Then,
$H(U,B(t_0,\frac{\epsilon}{2K})) \subset
B_{T}(H(x_0,t_0),\epsilon)$ and $H$ is continuous. Clearly,
$H_0\equiv f$ and $H_1\equiv g$. \end{proof}

\begin{definicion} Given $f,f':(T,v) \to (T',w)$ two metrically proper
maps between trees, let $H$ be a continuous map $H: T\times I \to
T'$ with \mbox{$H(v,t)=w$} $\forall t \in I$ such that $\forall
M>0, \exists N>0$ \ such that \mbox{$H^{-1}(B(v,M))\subset
B(v,N)\times I$.} \quad $H$ is a rooted metrically proper homotopy
of $f$ to $f'$ if \quad $H|_{T\times \{0\}}=f$ and $H|_{T\times
\{1\}}=f'$.
\end{definicion}

\textbf{Notation:} $f\simeq_{Mp} f'$ if and only if there exists a
rooted metrically proper homotopy of $f$ to $f'$.

\begin{definicion} Two trees $(T,v)$,$(T',w)$ are metrically
properly homotopic, \emph{($T \simeq_{Mp} T'$)}, if and only if
there exist two metrically proper maps between trees $f:T\to T'$
and $f':T'\to T$, such that $f\circ f'\simeq_{Mp} id_{T'} \mbox{
and } f'\circ f \simeq_{Mp} id_{T} $.
\end{definicion}

\begin{prop}\label{homotopy} $f\sim f' \Leftrightarrow f\simeq_{Mp} f'$.
\end{prop}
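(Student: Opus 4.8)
The plan is to prove the two implications separately, with the nontrivial direction being $f\sim f' \Rightarrow f\simeq_{Mp} f'$.

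For the easy direction, $f\simeq_{Mp} f' \Rightarrow f\sim f'$: suppose $H$ is a rooted metrically proper homotopy. Fix $M>0$. By the metric properness of $H$ there is $N>0$ with $H^{-1}(B(w,M))\subset B(v,N)\times I$, equivalently $H((T\setminus B(v,N))\times I)\subset T'\setminus B(w,M)$. Now fix $c\in\partial B(v,N)$ and consider the set $T_c\times I$, which is connected; its image under $H$ is a connected subset of $T'\setminus B(w,M)$, hence by Proposition \ref{componentesconexas} and Example \ref{ejpcorte} it lies inside a single subtree $T'_{c'}$ with $c'\in\partial B(w,M)$. Since $H(\cdot,0)=f$ and $H(\cdot,1)=f'$, both $f(T_c)$ and $f'(T_c)$ lie in this same $T'_{c'}$, so $f_{\mathcal{T}_N}(T_c)=T'_{c'}=f'_{\mathcal{T}_N}(T_c)$. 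As this holds for all $c\in\partial B(v,N)$, we get $f_{\mathcal{T}_N}=f'_{\mathcal{T}_N}$, and the same works for any larger radius (using that a bigger $N$ gives a finer cut set refining the coarser one). Hence $f\sim f'$.

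For the main direction, assume $f\sim f'$. The idea is to build the homotopy $H$ segment by segment along a suitable increasing sequence of radii. Choose $0=R_0<R_1<R_2<\cdots\to\infty$ together with radii $N_0<N_1<\cdots\to\infty$ so that for all $N>N_k$ one has $f_{\mathcal{T}_N}=f'_{\mathcal{T}_N}$ at "level $M=R_k$", i.e. $f(T\setminus B(v,N))$ and $f'(T\setminus B(v,N))$ land in $T'\setminus B(w,R_k)$, and moreover for each $c\in\partial B(v,N_{k+1})$ the subtrees $f(T_c)$ and $f'(T_c)$ lie in the same component $T'_{c'}$ of $T'\setminus B(w,R_k)$. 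On the compact piece $\bar B(v,N_1)$ one can use the shortest path homotopy of Lemma \ref{short homotopy} between $f|_{\bar B(v,N_1)}$ and $f'|_{\bar B(v,N_1)}$ (after an initial reparametrization so that everything is stationary near the root, which keeps $H(v,t)=w$). The crucial point is controlling properness: on the annular region $\bar B(v,N_{k+1})\setminus B(v,N_k)$ we again use the shortest path homotopy, but here both $f$ and $f'$ map each subtree $T_d$ (for $d$ in the appropriate sphere) into a common subtree $T'_{c'}\subset T'\setminus B(w,R_{k-1})$; since subtrees are convex (Lemma \ref{distanciasubarbol}), the entire shortest path between $f(x)$ and $f'(x)$ stays in $T'_{c'}$, hence outside $B(w,R_{k-1})$. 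Glue these homotopies along the spheres $\partial B(v,N_k)$: they agree there because the shortest-path homotopy formula $H(x,t)=j_x(t\cdot d(f(x),f'(x)))$ is defined pointwise and only depends on $f(x),f'(x)$, so adjacent pieces are automatically compatible on the overlap. Continuity of the glued map follows from the continuity of each piece (Lemma \ref{short homotopy}) together with local finiteness of the cover by these annuli. Finally, metric properness of $H$ holds because for any $M>0$, choosing $k$ with $R_{k-1}\geq M$, the construction guarantees $H((T\setminus B(v,N_k))\times I)\subset T'\setminus B(w,R_{k-1})\subset T'\setminus B(w,M)$, i.e. $H^{-1}(B(w,M))\subset B(v,N_k)\times I$.

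The main obstacle is verifying that the shortest path homotopy on each annulus stays in the correct subtree and thus does not re-enter a small ball around $w$ — this is where convexity of subtrees (Lemma \ref{distanciasubarbol}) is essential — and simultaneously checking that the pieces glue continuously across the spheres $\partial B(v,N_k)$. A subtle point worth spelling out is that on the sphere $\partial B(v,N_k)$ the point $x$ lies in $T_d$ for some $d$ relevant to the level-$R_{k-1}$ estimate but also in (the closure of a region governed by) the level-$R_k$ estimate, so the two shortest-path homotopies on the annuli meeting at that sphere are literally the same formula and hence agree; once this is observed, continuity of the glued homotopy is routine from the pasting lemma applied to the locally finite closed cover $\{\bar B(v,N_{k+1})\setminus B(v,N_k)\}_k$ (together with $\bar B(v,N_1)$). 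Properness and the root condition are then immediate from the choices made.
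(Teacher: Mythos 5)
Your proof is correct and follows essentially the same route as the paper: the easy implication via connectedness of $H(T_c\times I)$ landing in a single component $T'_{c'}$ of $T'\setminus B(w,M)$, and the main implication via the shortest path homotopy of Lemma \ref{short homotopy}, with metric properness checked by observing that $f(x)$ and $f'(x)$ lie in a common subtree $T'_{c'}$ and the arc joining them stays inside that subtree. The only differences are cosmetic: your annulus-by-annulus gluing is superfluous, since (as you note yourself) every piece is a restriction of the single globally defined shortest path homotopy, which is what the paper uses directly, and the root condition $H(v,t)=w$ is automatic from $f(v)=f'(v)=w$ without any reparametrization.
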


\begin{proof} Suppose $f\sim f'$. $\forall n\in \mathbb{N}$ let \
$t_n>0$\ such that \quad $f(T \backslash B(v,t_n)) \subset T'
\backslash B(w,n)$ and $f'(T \backslash B(v,t_n)) \subset T'
\backslash B(w,n).$ Without loss of generality suppose
$t_{n+1}>t_n+1$. If $f\sim f'$ by proposition
\ref{imagensubarboles} $\forall c$ in the cut set $\partial
B(v,t_n)$ in $T$, there exists a unique point $c'$ in the cut set
$\partial B(w,n)$ in $T'$, such that the image under either $f$ or
$f'$, of $T_c$, is contained in $T'_{c'}$.

By \ref{short homotopy} if we consider the shortest path homotopy
of $f$ to $f'$ it remains to check that this homotopy is
metrically proper. It suffices to show that $\forall t_n$ and \
$\forall t\in [0,1] \quad H_t(T\backslash B(v,t_n))\subset
T'\backslash B(w,n)$. Given $x\in T\backslash B(v,t_n)$ we know
that $f(x)\in T'\backslash B(w,n)$ and $f'(x)\in T'\backslash
B(w,n)$ and also, by definition \ref{relequiv}, $\exists ! c'\in
\partial B(w,n)$ such that \quad $f(x)\in T'_{c'}$ and $f'(x)\in T'_{c'}$.
As we saw in remark \ref{subarbol}, $T'_{c'}$ is an
$\mathbb{R}$-tree, so there exists an arc in that tree from $f(x)$
to $f'(x)$, and, since $T$ is uniquely arcwise connected, this arc
must be the same and must be contained in $T'_{c'}$. Hence the
homotopy restricted to $T\backslash B(v,t_n)$ is contained in
$T'\backslash B(w,n)$.

Conversely, given $f$,$f':(T,v)\to (T',w)$ consider $H: T\times I
\to T'$ a metrically proper homotopy of $f$ to $f'$. Let $M>0$,
$N>0$ such that $H_t(T\backslash B(v,N))\subset T'\backslash
B(w,M) \ \forall t \in I$. For any $c\in \partial B(v,N)$ and
$c'\in
\partial B(w,M)$ such that $f(T_c)\subset T'_{c'}$ it is clear
that $H_t(T_c)\subset T'_{c'} \ \forall t\in I$ (as it is the
continuous image of a connected set into $T'\backslash B(w,M)$),
and in particular (if $t=1$), $f'(T_c)\subset T'_{c'}$, and hence,
$f\sim f'$. \end{proof}

\section{Ultrametric spaces}

    We include in this section the definition and some elementary properties
of ultrametric spaces. Most of these properties are not going to
be needed through this paper but we believe that they are very
helpful to imagine the structure of an ultrametric space.

\begin{definicion} If $(X,d)$ is a metric space and \ $d(x,y)\leq \max \{d(x,z),d(z,y)\}$
for all $x,y,z\in X$, then $d$ is an \emph{ultrametric} and
$(X,d)$ is an \emph{ultrametric space}.
\end{definicion}

\begin{lema}\label{ultrametrico} \begin{itemize}
\item[(a)] Any point of a ball is a center of the ball.
\item[(b)]If two balls have a common point, one is contained in
the other. \item[(c)] The diameter of a ball is less than or equal
to its radius. \item[(d)] In an ultrametric space, all triangles
are isosceles with at most one short side. \item[(e)]
$S_r(a)=\underset{x\in S_r(a)}{\cup}B_{<r}(x)$. \item[(f)] The
spheres $S_r(a) \ (r>0)$ are both open and closed.
\end{itemize}
\end{lema}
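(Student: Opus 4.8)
The plan is to prove the six statements essentially in the order given, since (a) underlies (b) and (c), while (d) is obtained directly from the ultrametric inequality and then feeds into (e) and (f). Throughout, by a \emph{ball} I mean either a closed ball $\bar{B}_r(a)=\{x\in X:d(x,a)\le r\}$ or an open ball $B_{<r}(a)=\{x\in X:d(x,a)<r\}$; the arguments are identical in the two cases except for the placement of strict versus non-strict inequalities.

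For (a), suppose $b$ lies in the ball of radius $r$ centered at $a$. For any $x$ with $d(x,b)\le r$ (resp.\ $<r$), the ultrametric inequality gives $d(x,a)\le\max\{d(x,b),d(b,a)\}\le r$ (resp.\ $<r$), so the ball of radius $r$ centered at $b$ is contained in the one centered at $a$; by symmetry they coincide, i.e.\ $b$ is also a center. Statement (c) is then immediate: choosing any point $a$ of the ball as a center via (a), any two points $x,y$ of the ball satisfy $d(x,y)\le\max\{d(x,a),d(a,y)\}\le r$. For (b), if two balls of radii $r\le r'$ share a point $c$, then by (a) the point $c$ is a center of each, and $\{x:d(x,c)\le r\}\subseteq\{x:d(x,c)\le r'\}$ with the appropriate strict/non-strict inequalities, so the first ball sits inside the second.

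For (d), given $x,y,z$ with side lengths $d(y,z),d(x,z),d(x,y)$, applying the ultrametric inequality three times shows that each of the three lengths is $\le$ the maximum of the other two; hence the largest among them cannot be strictly larger than both of the others, so it is attained at least twice. Thus the triangle is isosceles, and the only possible "short" side is the unique smallest one. Statement (e) follows from (d): if $x\in S_r(a)$ and $y\in B_{<r}(x)$, then the triangle $a,x,y$ has $d(x,y)<r=d(a,x)$, so by (d) the two largest sides must agree, forcing $d(a,y)=r$; hence $B_{<r}(x)\subseteq S_r(a)$, and since trivially $x\in B_{<r}(x)$ we get $S_r(a)=\bigcup_{x\in S_r(a)}B_{<r}(x)$. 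Finally, for (f): by (e) the sphere $S_r(a)$ is a union of open balls, hence open; and $S_r(a)=\bar{B}_r(a)\setminus B_{<r}(a)$ is the difference of a closed set and an open set, hence closed — equivalently its complement $B_{<r}(a)\cup(X\setminus\bar{B}_r(a))$ is open.

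All of these arguments are short, and the computations are routine applications of the ultrametric inequality; the only point that requires a little care — and the closest thing to an obstacle — is keeping the open/closed and strict/non-strict bookkeeping consistent in (a) and (b), in particular the degenerate case of two balls of equal radius, one open and one closed, that share a point.
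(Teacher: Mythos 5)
Your proof is correct, and all six deductions are sound: (a)–(c) are direct applications of the strong triangle inequality, your argument for (d) (the largest side cannot strictly exceed the maximum of the other two, hence is attained twice) is the standard one, (e) follows from (d) exactly as you say, and (f) follows since $S_r(a)$ is a union of open balls by (e) and is closed as $\bar{B}_r(a)\setminus B_{<r}(a)$ — note that this last closedness holds in any metric space and does not use the ultrametric inequality at all. For comparison with the paper: the authors do not prove this lemma; they state it and refer the reader to Robert's book \cite{Ro}, where these standard facts are established by essentially the same routine computations you give. So your proposal supplies a self-contained proof where the paper offers only a citation. The one point to keep straight is the degenerate case you already flag in (b): if an open and a closed ball of the same radius meet, then after recentering at the common point the containment goes only one way (the open ball inside the closed one), but that still verifies the statement that one ball is contained in the other, so your argument covers every case.
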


All these properties are demonstrated and beautifully exposed in
\cite{Ro}.

\section{The end space of a tree}

In this section we define the functor $\xi$ from trees to
ultrametric spaces following step by step \cite{Hug}.

\begin{definicion} The \emph{end space} of a rooted
$\mathbb{R}$-tree $(T,v)$ is given by: \vspace{0.5cm}

\mbox{$end(T,v)=\{f:[0,\infty) \rightarrow T \ |\ f(0)=v$ and $f$
is an isometric embedding $\}.$} \vspace{0.5cm}

For $f,g\in end(T,v)$, define:

\[ d_e(f,g)= \left\{ \begin{tabular}{l} $0 \qquad  \mbox{ if }\
f=g,$\\

$e^{-t_0} \quad \mbox{ if } f\ne g \mbox{ and } t_0=sup\{t\geq 0 |
\ f(t)=g(t)\}$\end{tabular}
 \right.
\]
\end{definicion}

\vspace{0.5cm}

Note that since $T$ is uniquely arcwise connected:

\[ \{t\geq 0 | \ f(t)=g(t)\}= \left\{ \begin{tabular}{l} $[0,\infty) \mbox{ if }
f=g,$\\

$[0,t_0] \mbox{ if } f\ne g.$\end{tabular}
 \right.
 \]

\begin{prop} If $(T,v)$ is a rooted $\mathbb{R}$-tree,
then $(end(T,v),d_e)$ is a complete ultrametric space of diameter
$\leq 1$.
\end{prop}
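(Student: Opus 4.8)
The plan is to verify three things in turn: that $d_e$ is an ultrametric, that the diameter is at most $1$, and that the space is complete. The diameter bound is immediate from the definition: if $f\neq g$ then $d_e(f,g)=e^{-t_0}$ for some $t_0\geq 0$, hence $d_e(f,g)\leq e^0=1$; note that $t_0$ is genuinely a maximum (not just a supremum) by the observation recorded just before the statement, since $\{t\geq 0\mid f(t)=g(t)\}=[0,t_0]$ when $f\neq g$ — this uses that $T$ is uniquely arcwise connected, so once two geodesic rays from $v$ separate they never meet again.

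\textbf{The ultrametric inequality.} First I would dispose of the degenerate cases (if any two of $f,g,h$ coincide the inequality is trivial). Assuming $f,g,h$ pairwise distinct, write $t_{fg},t_{gh},t_{fh}$ for the three separation parameters, so $d_e(f,g)=e^{-t_{fg}}$ etc. Because $x\mapsto e^{-x}$ is decreasing, the inequality $d_e(f,g)\leq\max\{d_e(f,h),d_e(h,g)\}$ is equivalent to $t_{fg}\geq\min\{t_{fh},t_{hg}\}$. This is the key combinatorial fact: set $s=\min\{t_{fh},t_{hg}\}$; then on $[0,s]$ we have $f=h$ and $h=g$ (since $[0,t_{fh}]\supseteq[0,s]$ and likewise for $gh$), hence $f=g$ on $[0,s]$, so $s\in\{t\geq 0\mid f(t)=g(t)\}=[0,t_{fg}]$, giving $t_{fg}\geq s$ as required. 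Symmetry and $d_e(f,g)=0\iff f=g$ are immediate from the definition.

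\textbf{Completeness.} This is the step I expect to be the main obstacle, since it requires actually producing a limit point. Let $(f_n)$ be a Cauchy sequence in $end(T,v)$; I would show it converges. Cauchyness means: for every $k\in\mathbb{N}$ there is $N_k$ such that $d_e(f_n,f_m)<e^{-k}$ for $n,m\geq N_k$, i.e.\ $f_n$ and $f_m$ agree on $[0,k]$ for $n,m\geq N_k$. Hence for each fixed $t$, the sequence $(f_n(t))$ is eventually constant, and I can define $f(t)=\lim_n f_n(t)=f_n(t)$ for $n\geq N_{\lceil t\rceil}$. One checks $f(0)=v$ and that $f$ is an isometric embedding: given $t<t'$, pick $k>t'$ and $n\geq N_k$, so that $f$ agrees with $f_n$ on $[0,t']$; since $f_n$ is an isometric embedding, $d(f(t),f(t'))=d(f_n(t),f_n(t'))=t'-t$. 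Thus $f\in end(T,v)$. Finally $d_e(f_n,f)\leq e^{-k}$ for $n\geq N_k$ because $f$ and $f_n$ agree on $[0,k]$ by construction, so $f_n\to f$. This completes the proof.
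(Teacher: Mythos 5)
Your proof is correct. Note that the paper itself states this proposition without proof, importing it from Hughes's paper \cite{Hug}, so there is no internal argument to compare against; what you give is the standard verification, and each step checks out: the diameter bound and the equivalence $d_e(f,g)\leq\max\{d_e(f,h),d_e(h,g)\}\iff t_{fg}\geq\min\{t_{fh},t_{hg}\}$ are handled correctly (including the point, recorded in the paper just before the statement, that the agreement set is a closed interval $[0,t_0]$ by unique arcwise connectedness), and your completeness argument — Cauchy implies eventual agreement on $[0,k]$, define the limit ray pointwise, check it is an isometric embedding from $v$, and verify convergence — is sound. The only cosmetic gloss is the implicit compatibility of the thresholds $N_k$ when you say ``$f$ agrees with $f_n$ on $[0,k]$ for $n\geq N_k$''; this is immediate either by choosing the $N_k$ increasing or by comparing $f_n(s)$ with $f_m(s)$ for $m\geq\max\{N_k,N_{\lceil s\rceil}\}$, so it is not a gap.
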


\begin{prop} \label{arbol} For any $x\in (T,v)$, a geodesically complete
rooted $\mathbb{R}$-tree, there exist $F\in end(T,v)$ and $t\in
[0,\infty)$ such that \quad $F(t)=x$ (in fact, $t=\| x\|$).
\end{prop}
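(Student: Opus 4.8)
The plan is to construct the required end point $F$ explicitly by starting with the arc $[v,x]$ and then invoking geodesic completeness to extend it to a ray based at $v$. First I would dispose of the trivial case $x=v$: any isometric embedding $f\colon[0,\infty)\to T$ with $f(0)=v$ (one exists because $(T,v)$ is geodesically complete — take any nondegenerate $[0,t]$ with $f(0)=v$, which exists unless $T=\{v\}$, in which case the statement is vacuous with $t=0$) satisfies $f(0)=v=x$, so we may take $t=0=\|x\|$.

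For $x\neq v$ I would set $t=\|x\|=d(v,x)>0$ and use the defining property of an $\mathbb{R}$-tree: the unique arc $[v,x]$ is isometric to $[0,d(v,x)]=[0,t]$, so there is an isometric embedding $f\colon[0,t]\to T$ with $f(0)=v$ and $f(t)=x$. Since $(T,v)$ is geodesically complete (Definition \ref{extensiongeod}), this $f$ extends to an isometric embedding $\tilde f\colon[0,\infty)\to T$ with $\tilde f(0)=v$; thus $\tilde f\in end(T,v)$ by definition of the end space. Setting $F=\tilde f$ gives $F(t)=\tilde f(t)=f(t)=x$ with $t=\|x\|$, as claimed.

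There is essentially no hard part here — the statement is little more than an unpacking of the definitions of $\mathbb{R}$-tree and of geodesic completeness. The only point requiring a word of care is the degenerate case $T=\{v\}$ (the trivial tree flagged in the Remark after Definition \ref{extensiongeod}): then the only point is $x=v$, $end(T,v)$ consists of the constant map, and one should note that the statement holds with $t=0$. In the nontrivial case the existence of at least one ray through $v$ is exactly what geodesic completeness provides once we have the seed arc $[0,\|x\|]\xrightarrow{\sim}[v,x]$, so no separate argument for non-emptiness of $end(T,v)$ is needed.
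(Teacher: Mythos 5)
Your argument is correct and follows essentially the same route as the paper's own proof: realize $[0,\|x\|]$ isometrically as the arc $[v,x]$ with $0\mapsto v$, then invoke geodesic completeness to extend to a ray $F\in end(T,v)$ with $F(\|x\|)=x$. One minor quibble in your side remark: for the trivial tree $T=\{v\}$ the constant map is not an isometric embedding, so $end(T,v)$ is in fact empty there — an edge case the paper also silently ignores and which does not affect the argument for $x$ in a nontrivial tree.
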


\begin{proof} $[0,d(v,x)]=[0,\| x\|]\approx \footnote{isometry}
[v,x]$ and by \ref{extensiongeod}, it extends to a geodesic ray
$F=\{f:[0,\infty) \rightarrow T | \ f$ isometry \}. The result is
a geodesic ray (an element of the end space of the tree), $F$,
such that $F(\|x\|)=x$. \end{proof}

\section{The tree of an ultrametric space}

\begin{definicion}
Let U a complete ultrametric space with diameter $\leq 1$, define:
\begin{displaymath}T_U :=\frac{U\times
[0,\infty)}{\sim}\end{displaymath}
\end{definicion}
with $(\alpha,t)\sim(\beta,t')\Leftrightarrow t=t' \quad $and$
\quad \alpha,\beta \in U \quad$ such that $\quad
d(\alpha,\beta)\leq e^{-t}.$ \vspace{0.5cm}

Given two points in $T_U$ represented by equivalence classes
$[x,t],[y,s]$ with $(x,t),(y,s)\in U\times [0,\infty)$ define a
metric on $T_U$ by:
\[D([x,t],[y,s])=\left\{
\begin{tabular}{l} $|t-s| \qquad \qquad \qquad \qquad \qquad
\qquad \mbox{ if }
x=y,$\\

$t+s-2\min\{-ln(d(x,y)),t,s\} \quad \mbox{ if } x\ne
y.$\end{tabular}
 \right.\]

\begin{nota}
Instead of defining the tree as in \cite{Hug} for any ultrametric
space of finite diameter we restrict ourselves to ultrametric
spaces of diameter $\leq 1$. We place the root in $[(x,0)]$ and
thus the ultrametric space is isometric to the end space of the
tree.
\end{nota}

\begin{prop}\label{metrica} $D$ is a metric on $T_U$.
\end{prop}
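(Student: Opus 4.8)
The plan is to verify the three metric-space axioms for $D$: non-negativity with $D([x,t],[y,s]) = 0$ iff $[x,t]=[y,s]$, symmetry, and the triangle inequality. Before any of this I would check that $D$ is \emph{well defined}, i.e. independent of the chosen representatives of the equivalence classes. If $(x,t)\sim(x',t)$ then $d(x,x')\le e^{-t}$, so $-\ln d(x,x')\ge t$; I must check that replacing $x$ by $x'$ (and similarly $y$ by $y'$ with $d(y,y')\le e^{-s}$) does not change the value of the formula. The case $x=y$ versus $x\ne y$ needs care: if $x=y$ but $x'\ne y'$, then $d(x',y')\le\max\{d(x',x),d(x,y),d(y,y')\}=\max\{d(x',x),d(y,y')\}\le\max\{e^{-t},e^{-s}\}$, so $-\ln d(x',y')\ge\min\{t,s\}$, whence $\min\{-\ln d(x',y'),t,s\}=\min\{t,s\}$ and $t+s-2\min\{t,s\}=|t-s|$, matching the $x=y$ branch. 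When $x\ne y$ and $x'\ne y'$, ultrametricity forces $d(x',y')=d(x,y)$ unless one of $d(x,x'),d(y,y')$ equals $d(x,y)$; in the exceptional subcase one checks $-\ln d(x',y')$ and $-\ln d(x,y)$ are both $\ge\min\{t,s\}$, so the $\min$ with $t,s$ is unaffected. So well-definedness reduces to a short case analysis driven by the ultrametric inequality.

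Next, non-negativity and the identity of indiscernibles. Writing $\ell = \min\{-\ln d(x,y),t,s\}$ when $x\ne y$, we have $\ell\le t$ and $\ell\le s$, so $t+s-2\ell\ge 0$, and it vanishes only if $\ell=t=s$, forcing $-\ln d(x,y)\ge t=s$, i.e. $d(x,y)\le e^{-t}$, i.e. $(x,t)\sim(y,s)$ — contradicting $x\ne y$ unless the classes coincide. The $x=y$ branch gives $|t-s|$, zero iff $t=s$, i.e. the same class. Symmetry is immediate from the symmetry of the formula in the two arguments.

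The main work is the triangle inequality $D([x,t],[z,r])\le D([x,t],[y,s])+D([y,s],[z,r])$. I would organise it by how many of $x,y,z$ coincide. If all three are equal it is the triangle inequality for $|\cdot|$ on $\mathbb{R}$. If exactly two coincide, say $y=z\ne x$ (the other two such cases are symmetric), the inequality becomes $t+r-2\min\{-\ln d(x,z),t,r\}\le \bigl(t+s-2\min\{-\ln d(x,y),t,s\}\bigr)+|s-r|$; since $d(x,y)=d(x,z)$ here, set $p=-\ln d(x,z)$ and it reduces to the elementary inequality $r-2\min\{p,t,r\}\le s-2\min\{p,t,s\}+|s-r|$, provable by splitting on the order of $s$ and $r$. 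The genuinely delicate case is $x,y,z$ pairwise distinct. Here I would use the ultrametric inequality on $\{x,y,z\}$: at least two of $d(x,y),d(y,z),d(x,z)$ are equal and are $\ge$ the third, so in terms of $a=-\ln d(x,y)$, $b=-\ln d(y,z)$, $c=-\ln d(x,z)$ we have that the two largest among $a,b,c$ are equal and $\ge$ the third — equivalently $c\ge\min\{a,b\}$ and symmetric statements. The target inequality is
\[
 t+r-2\min\{c,t,r\}\ \le\ t+s-2\min\{a,t,s\}\ +\ s+r-2\min\{b,s,r\},
\]
i.e. $\min\{a,t,s\}+\min\{b,s,r\}\le s+\min\{c,t,r\}$. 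I expect this to be the main obstacle: it should follow by a finite but slightly tedious case split on which of the quantities achieves each minimum, using $c\ge\min\{a,b\}$ together with $\min\{a,t,s\}\le s$ and $\min\{b,s,r\}\le s$ (so the left side is always $\le 2s$, handling the case $\min\{c,t,r\}=s$ is not quite enough and one really must track the witnesses). A cleaner route I would try first is to interpret $[x,t]$ as a point of the $\mathbb{R}$-tree and note $D$ coincides with the path-metric: $D([x,t],[y,s])$ is the length of the path going down from height $t$ to the branch height $\min\{-\ln d(x,y),t,s\}$ and back up to height $s$, so the triangle inequality is automatic once one verifies the three branch heights are consistent — and that consistency is exactly the ultrametric inequality above. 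If that geometric argument can be made rigorous it replaces the case analysis entirely; otherwise I fall back on the explicit inequality.
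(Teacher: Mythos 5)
The paper offers no proof of this proposition at all: it is stated without argument, the construction of $(T_U,D)$ and its verification being imported from Hughes \cite{Hug}, so there is nothing in the text to compare with step by step; your proposal has to stand on its own, and it does. The reduction of the triangle inequality (in the generic case $x,y,z$ pairwise distinct) to $\min\{a,t,s\}+\min\{b,s,r\}\le s+\min\{c,t,r\}$ with the ultrametric input $c\ge\min\{a,b\}$ is exactly right, and the step you flag as the main obstacle closes more cleanly than you fear, with no case split: put $m_1=\min\{a,t,s\}$, $m_2=\min\{b,s,r\}$; from $m_1\le s$ and $m_2\le s$ one gets $m_1+m_2-s\le\min\{m_1,m_2\}$, while $m_1\le a,t$ and $m_2\le b,r$ give $\min\{m_1,m_2\}\le\min\bigl\{\min\{a,b\},t,r\bigr\}\le\min\{c,t,r\}$, which is the required bound. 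Two minor remarks. First, in the well-definedness analysis the exceptional case is $d(x,y)\le\max\{d(x,x'),d(y,y')\}$ (not merely equality of $d(x,y)$ with one of them): the isosceles property only forces $d(x',y')=d(x,y)$ when $d(x,x')$ and $d(y,y')$ are both strictly smaller than $d(x,y)$; but the argument you give for the exceptional case (both $-\ln d(x,y)$ and $-\ln d(x',y')$ are then $\ge\min\{t,s\}$, so the minimum with $t,s$ is unchanged) covers precisely this weaker hypothesis, so the proof is fine once the case is stated with ``$\ge$'' instead of ``$=$''. Second, your geometric reading of $D([x,t],[y,s])$ as the length of the path descending to the branching height $\min\{-\ln d(x,y),t,s\}$ and ascending again is indeed the picture behind the construction (it is what Proposition \ref{induced tree} formalizes), but making it rigorous before knowing $D$ is a metric would be circular; the explicit inequality above is the honest route, and it is short.
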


\begin{prop}\label{induced tree} $(T_U,D)$ is a geodesically complete rooted $\mathbb{R}$-tree.
\end{prop}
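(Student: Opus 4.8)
The plan is to check, one at a time, that $(T_U,D)$ has a well-defined root, is a geodesic space whose arcs are isometric to real intervals of the right length, is uniquely arcwise connected, and is geodesically complete; that $D$ is a metric is Proposition~\ref{metrica}, which I take as given. First I would note that the class $[x,0]$ is independent of $x$: since $\mathrm{diam}(U)\le 1=e^{-0}$, all pairs $(x,0),(y,0)$ are identified, so there is a single point $\mathbf{0}:=[x,0]$, which we take to be the root. Substituting into the displayed formula for $D$ gives $\|[x,t]\|:=D(\mathbf{0},[x,t])=t$ independently of the chosen representative; thus the height of a point is a well-defined $1$-Lipschitz function $T_U\to[0,\infty)$ sending $[x,t]$ to $t$.

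Next I would exhibit explicit geodesics. For distinct $P=[x,t]$, $Q=[y,s]$ with $x=y$ in $U$, the vertical path $u\mapsto[x,u]$ between heights $\min\{t,s\}$ and $\max\{t,s\}$ works. If $x\ne y$, put $r=-\ln d(x,y)\ge 0$ and $m=\min\{r,t,s\}$; as $m\le r$ we get $d(x,y)\le e^{-m}$, so $[x,m]=[y,m]=:W$. Let $\alpha\colon[0,D(P,Q)]\to T_U$ descend the fibre over $x$, $\alpha(u)=[x,t-u]$ for $u\in[0,t-m]$, then ascend the fibre over $y$, $\alpha(u)=[y,u-t+2m]$ for $u\in[t-m,t+s-2m]$. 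A short computation with the formula for $D$ — distinguishing whether the two parameters lie on the same fibre or on opposite fibres, and using in the mixed case that the relevant minimum is $m$ — shows $\alpha$ is an isometric embedding. Its image is an arc from $P$ to $Q$ isometric to $[0,D(P,Q)]$, which supplies the existence of geodesics and the isometry-to-subinterval requirement.

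The substantive step is unique arcwise connectedness. A clean route uses $0$-hyperbolicity: the formula gives, for $P=[x,t]$ and $Q=[y,s]$, the Gromov product at the root $\tfrac12(\|P\|+\|Q\|-D(P,Q))=\min\{-\ln d(x,y),t,s\}$, and the four-point inequality at the root, $\min\{-\ln d(x_1,x_3),t_1,t_3\}\ge\min\{-\ln d(x_1,x_2),-\ln d(x_2,x_3),t_1,t_2,t_3\}$, then amounts exactly to the ultrametric inequality $d(x_1,x_3)\le\max\{d(x_1,x_2),d(x_2,x_3)\}$ for $d$ together with trivial comparisons among the heights; by the standard characterization, a geodesic space that is $0$-hyperbolic with respect to a basepoint is an $\mathbb{R}$-tree. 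More in the spirit of the paper, one may argue directly: a geodesic $\gamma\colon[0,L]\to T_U$ from $\mathbf{0}$ is forced to be the vertical fibre over its endpoint — bounding $\|\gamma(u)\|$ against $\gamma(0)$ gives $\|\gamma(u)\|=u$, so $\gamma(u)=[z_u,u]$, and comparing $D(\gamma(u),\gamma(L))$ with $L-u$ forces $d(z_u,z_L)\le e^{-u}$, i.e. $\gamma(u)=[z_L,u]$ — hence is unique; then for arbitrary $P,Q$ the point $W$ above satisfies $D(P,W)+D(W,Q)=D(P,Q)$, so $W$ lies on every arc from $P$ to $Q$, and cutting there reduces to the root case. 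I expect this last reduction — making precise that $W$ lies on every arc without presupposing that $T_U$ is a tree — to be the main obstacle if one insists on the elementary route.

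Finally, geodesic completeness follows: given an isometric embedding $f\colon[0,\ell]\to T_U$ with $\ell>0$ and $f(0)=\mathbf{0}$, the height of $f(\ell)$ equals $\ell$, so $f(\ell)=[x,\ell]$ for some $x\in U$; by the root case, $f(u)=[x,u]$ for all $u\in[0,\ell]$, and $\tilde f(u):=[x,u]$, $u\in[0,\infty)$, is an isometric embedding (immediate from $D([x,u],[x,u'])=|u-u'|$) extending $f$. Together with Proposition~\ref{metrica}, this shows $(T_U,D)$ is a geodesically complete rooted $\mathbb{R}$-tree.
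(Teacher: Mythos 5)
Your argument is correct, but there is nothing in the paper to compare it against: Proposition~\ref{induced tree} is stated without proof, being imported (like Propositions~\ref{metrica} and the construction of $T_U$ itself) from Hughes's paper, where the verification is done by a direct hands-on analysis of arcs in $T_U$. What you do instead is a two-step argument: explicit geodesics plus $0$-hyperbolicity. Your explicit path through $W=[x,m]$, $m=\min\{-\ln d(x,y),t,s\}$, is indeed an isometric embedding (the mixed-case computation works because the minimum in the formula for $D$ is exactly $m$ whether $m=r$, $m=t$ or $m=s$), your computation of the Gromov product at the root as $\min\{-\ln d(x,y),t,s\}$ is right, and the four-point inequality at the root does reduce to the ultrametric inequality (with the convention $-\ln 0=\infty$ when representatives coincide); since $0$-hyperbolicity at one basepoint propagates to all basepoints with no loss of constant, the standard characterization of $\mathbb{R}$-trees as $0$-hyperbolic geodesic spaces applies. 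Your height-rigidity argument ($\|\gamma(u)\|=u$ forces $\gamma(u)=[z_L,u]$) is also correct and gives geodesic completeness cleanly, independently of which route you take for unique arcwise connectedness. What this buys over the direct approach is brevity and a conceptual explanation (the tree structure of $T_U$ is literally the ultrametric inequality read as $0$-hyperbolicity), at the cost of invoking an external characterization that the paper never uses. One caution: the ``more elementary'' alternative you sketch is not yet a proof, and you rightly flag this --- uniqueness of geodesics from the root does not by itself give uniqueness of arcs (the plane is uniquely geodesic), and $D(P,W)+D(W,Q)=D(P,Q)$ does not place $W$ on an arbitrary arc from $P$ to $Q$ without further argument; so if you wanted the elementary route you would have to argue as Hughes does, analyzing an arbitrary arc directly, rather than reducing to geodesics.
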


\begin{prop}\label{isometry} $U\approx end(T_U).$
\end{prop}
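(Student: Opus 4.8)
The plan is to construct an explicit isometry $\Phi\colon U \to end(T_U)$ and verify it preserves $d_e$. Given $\alpha \in U$, I would define $\Phi(\alpha)$ to be the ray $f_\alpha\colon[0,\infty)\to T_U$ given by $f_\alpha(t) = [(\alpha,t)]$. First I would check this is an isometric embedding of $[0,\infty)$ into $T_U$: for $s \le t$ we have $x=y=\alpha$ in the formula for $D$, so $D([(\alpha,s)],[(\alpha,t)]) = |t-s|$, which is exactly the condition required for $f_\alpha \in end(T_U)$, and clearly $f_\alpha(0) = [(\alpha,0)]$, the root. So $\Phi$ is well-defined into $end(T_U)$.

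Next I would compute $d_e(\Phi(\alpha),\Phi(\beta))$ for $\alpha \ne \beta$. By definition $d_e(f_\alpha,f_\beta) = e^{-t_0}$ where $t_0 = \sup\{t \ge 0 : [(\alpha,t)] = [(\beta,t)]\}$. Now $[(\alpha,t)] = [(\beta,t)]$ precisely when $d(\alpha,\beta) \le e^{-t}$, i.e. when $t \le -\ln d(\alpha,\beta)$; note this set is a nonempty initial segment since $d(\alpha,\beta) \le 1$ gives $-\ln d(\alpha,\beta) \ge 0$. Hence $t_0 = -\ln d(\alpha,\beta)$ and $d_e(\Phi(\alpha),\Phi(\beta)) = e^{-(-\ln d(\alpha,\beta))} = d(\alpha,\beta)$. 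This shows $\Phi$ is distance-preserving, hence injective.

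It remains to prove surjectivity: every $g \in end(T_U)$ equals $f_\alpha$ for some $\alpha \in U$. Given an isometric embedding $g\colon[0,\infty)\to T_U$ with $g(0) = [(x,0)]$ for some (any) $x$, write $g(t) = [(\alpha_t, r_t)]$ for representatives in $U \times [0,\infty)$. Using that $g$ is a geodesic ray from the root and that $D([(x,t)],[(x,s)])=|t-s|$ while any point of $T_U$ lies on such a vertical ray (this is essentially Proposition \ref{arbol} applied to the tree $T_U$, which is geodesically complete by Proposition \ref{induced tree}), I would argue that $\|g(t)\| = D([(x,0)],g(t)) = t$, so one may take $r_t = t$, i.e. $g(t) = [(\alpha_t,t)]$. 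The key point is then that the classes $\alpha_t$ are all compatible: for $s \le t$, since $g$ restricted to $[0,t]$ is the arc $[v, g(t)]$, the point $g(s)$ is the point of that arc at distance $s$ from the root, and examining the geodesic structure of $T_U$ (the arc from the root to $[(\alpha_t,t)]$ passes through $[(\alpha_t,s)]$) forces $[(\alpha_s,s)] = [(\alpha_t,s)]$, i.e. $d(\alpha_s,\alpha_t)\le e^{-s}$. In particular $(\alpha_t)_{t\ge 0}$ is a Cauchy net in the complete space $U$, converging to some $\alpha$, and by continuity $d(\alpha,\alpha_t) \le e^{-t}$ for all $t$, so $[(\alpha,t)] = [(\alpha_t,t)] = g(t)$, giving $g = f_\alpha = \Phi(\alpha)$.

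I expect the main obstacle to be the surjectivity argument, specifically justifying carefully that an arbitrary end $g$ has the form $t \mapsto [(\alpha_t,t)]$ with the $\alpha_t$ pairwise close — this requires a clean description of the geodesics in $T_U$ (which segments $[(x,t)]$-to-$[(y,s)]$ look like and which points lie on $[v, p]$ for a given $p$), drawn from the proof of Proposition \ref{induced tree}. The isometry computation itself is the routine part. An alternative, slicker route would be to invoke the categorical machinery: one already knows $end(T,v)$ recovers the tree up to the relevant equivalence, so it suffices to exhibit the natural bijection above and check it is an isometry on a dense or generating set; but writing out the explicit $\Phi$ as above is the most self-contained approach.
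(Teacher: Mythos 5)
Your proposal is correct and follows essentially the same route as the paper: the same map $\alpha\mapsto f_\alpha$ with $f_\alpha(t)=[(\alpha,t)]$ and the same computation $d_e(f_\alpha,f_\beta)=d(\alpha,\beta)$. The only difference is that you spell out the surjectivity argument (compatibility of the classes $\alpha_t$ and a Cauchy limit in the complete space $U$), which the paper simply declares immediate from completeness; your elaboration is sound.
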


\begin{proof} Consider the map $\gamma: U \to end(T_U)$ which
sends each $\alpha \in U$ to the isometric embedding
$f_{\alpha}:[0,\infty) \to T_U$ such that
$f_{\alpha}(t)=(\alpha,t)$ ($f_{\alpha} \in end(T_U)$).

Given $\alpha, \beta \in U$ let $d_0=d(\alpha, \beta)$ then
$(\alpha,t)=(\beta,t)$ on $[0,-ln(d_0)]$ and in the end space,
$d(f_{\alpha},f_{\beta})=e^{ln(d_0)}=d_0$ and hence, $\gamma$ is
an isometry. It is immediate to see that it is surjective by the
completeness of $U$. \end{proof}

\section{Constructing the functors}
    \subsection{Maps between trees induced by an uniformly continuous map between the end spaces}

The purpose in this section is, from a uniformly continuous map
between two ultrametric spaces (with diameter $\leq 1$) to induce
a map between the trees of these spaces. As we have seen, the
spaces are isometric to the end spaces of their trees, so we can
suppose that uniformly continuous map directly between the ends.

\begin{definicion2} A function $\varrho: [0,\infty) \longrightarrow
[0,\infty)$ is called \emph{modulus of continuity} if $\varrho$ is
non-decreasing, continuous at $0$ and $\varrho(0)=0$.
\end{definicion2}

\begin{lema2}\label{modulus} Let $(X_1,d_1)$, $(X_2,d_2)$ two metric spaces, $X_2$ bounded
and let \mbox{$f: X_1 \rightarrow X_2$} a uniformly continuous
map. Then $\exists \varrho: [0,\infty) \rightarrow [0,\infty)$
modulus of continuity such that \quad $\forall x,y \in X_1 \quad
d_2(f(x),f(y))\leq \varrho(d_1(x,y))$.
\end{lema2}

\begin{proof} \ Define: \begin{equation}
\varrho(\delta):=\sup_{x,y\in X_1,,\ d(x,y)\leq \delta}
\{d(f(x),f(y))\}. \end{equation} We are going to show that
$\varrho$ is a modulus of continuity. $\varrho$ is well defined
since $X_2$ is bounded, and it is immediate that is non-decreasing
and $\varrho(0)=0$. It remains to check the continuity at 0. Since
$f$ is uniformly continuous, then $\forall \varepsilon>0 \quad
\exists \ \delta>0 $ such that $\ d(x,y)<\delta \Rightarrow
d(f(x),f(y))<\varepsilon$ then $\varrho(\delta')\leq \varepsilon \
\forall \delta'<\delta$, and hence,
\begin{displaymath} \lim_{\delta \rightarrow
0}\varrho(\delta)=0. \end{displaymath} \end{proof}

To define the map between the trees we will need the modulus of
continuity to be continuous, so to define the functor, it suffices
to show that for the map between the end spaces, there exists a
continuous modulus of continuity such as in lemma \ref{modulus}.

We only need to show that there exists such a map to define the
functor and to prove the categorical equivalence, nevertheless, we
can construct (1) and in certain examples, to follow the process
from \ref{pares} to \ref{omega} and see what $\lambda$ exactly
does, and so, for simple examples, we can get an analytic
expression of this map between the trees as we shall see later on
in \ref{ejp lipschitz}.

Following the construction of Borsuk in \cite{Bo2}, we take
something similar to a convex hull of the image to obtain a
continuous (and convex) modulus of continuity.

\begin{definicion2}\label{pares} $\forall x \in [0,\infty)$ let \emph{$\Gamma
(x)$} the set of ordered pairs $(x_1,x_2)$ such that $\quad
x_1,x_2 \in [0,\infty),\ x_1<x_2$\ and \quad $x\in [x_1,x_2].$
\end{definicion2}

\begin{definicion2} \label{rho} If $ x\in [x_1,x_2] \quad \exists$! $t\in [0,1] $\ such that $\quad
x = tx_1 + (1-t)x_2 $. Let \emph{$\varrho_{x_1,x_2}(x)$} $
=t\varrho(x_1)+(1-t)\varrho(x_2)$
\end{definicion2}

\begin{definicion2}\label{omega}\[\omega(x):=\sup_{x_1,x_2\in \Gamma(x)}\varrho_{x_1,x_2}(x)
\]
\end{definicion2}

\begin{prop2} $\omega(0)=0$ and $\omega(x)$ is increasing, convex,
uniformly continuous and \[ \lim_{x\rightarrow 0}\omega (x)=0
\]
\end{prop2}

\begin{proof} It is clear that $\omega(0)=\varrho(0)=0$. It is
immediate to see that it is increasing since $\varrho$ is, and
convex obviously by construction.  The proof that it is continuous
at 0 is in \cite{Bo2}. \end{proof}

\begin{obs2} \label{omega mayor o igual} Note that by definition $\omega(x)\geq \varrho(x)
\quad \forall x\in [0,\infty)$.
\end{obs2}

The ultrametric spaces we are considering are of diameter $\leq 1$
so, we may assume $im(\omega)\subset [0,1]$. Define
$\lambda:=\omega \vert_{[0,1]}$ and suppose $\lambda(1)=1$.

There is no loss of generality since if $\lambda(1)<1$ we can find
another convex map, greater or equal than this one, with the same
properties and such that its image of 1 is 1. It suffices to
define $\varrho'(1)=1$ and $\varrho'(t)=\lambda(t) \ \forall t\in
[0,1)$. From this map, we rewrite the process to construct a
convex map $\omega'$ as in \ref{pares}, \ref{rho} and \ref{omega},
that will be greater or equal than $\varrho'$ and $\omega'(1)=1$.
We consider the restriction to $[0,1]$ and so we get the map
$\lambda'$ that we were looking for.

Hence, from a uniformly continuous map $f$ between two ultrametric
spaces $U_1,U_2$ with diameter $(U_i)\leq 1$, we get a map
$\lambda:[0,1] \rightarrow [0,1]$ uniformly continuous, convex and
non-decreasing such that:
\[\lim_{\delta \rightarrow 0}\lambda(\delta)=0 \] with
$\lambda(0)=0, \ \lambda(1)=1$ and by remark \ref{omega mayor o
igual}:  \[\forall x,\ y \in U_1 \quad d(f(x),f(y))\leq
\lambda(d(x,y)).\]

Using this map we are now in position to induce from a uniformly
continuous map $f$, between two complete ultrametric spaces of
diameter $\leq 1$, a map between the induced trees. As we saw in
\ref{isometry}, we can identify this ultrametric spaces with the
end spaces of the trees and given $f:U_1 \rightarrow U_2$ a
uniformly continuous map, by abuse of notation consider $f:
end(T_{U_1},v)\rightarrow end(T_{U_2},w)$ such that for any $x\in
U_1$, the isometric embedding whose image is $x\times [0,\infty)
(\in end(T_{U_1},v))$ is sent to the isometric embedding whose
image is $f(x)\times [0,\infty) (\in end(T_{U_2},w))$,
transforming $f$ into a map between end spaces.

\begin{definicion2} Let (T,v),(T',w) two geodesically complete rooted
$\mathbb{R}$-trees, and let $f:end(T,v)\rightarrow end(T',w)$ a
uniformly continuous map. Then define $\hat{f}:T\rightarrow T'$
such that \quad $\forall x \in T$ \ let $F\in end(T,v), \
t\in[0,\infty) \ $ with $\quad x=F(t)$ \ then
\mbox{$\hat{f}(x)=f(F)\Big(-ln(\lambda(e^{-t}))\Big).$}
\end{definicion2}

\begin{nota2}\label{no decreciente} $-ln(\lambda(e^{-t}))$ is non-decreasing.
\end{nota2}

\[t_1>t_0 \Rightarrow e^{-t_1}<e^{-t_0} \Rightarrow
\lambda(e^{-t_1})\leq \lambda(e^{-t_0})\Rightarrow
-ln(\lambda(e^{-t_1}))\geq -ln(\lambda(e^{-t_0})).\]

Moreover, if $d_0=min\{d>0|\ \lambda(d)=1\}$, then $\lambda$ is
strictly increasing on $[0,d_0]$ since it is convex, and hence, it
is immediate to check that $-ln(\lambda(e^{-t}))$ is strictly
increasing for $t$ on $[-ln(d_o), \infty)$. This implies that the
map $\hat{f}|_{F[-ln(d_0),\infty)}$ will be injective for every
$F\in end(T,v)$.

\begin{nota2} \label{limite} Note that \[\lim_{\delta \rightarrow 0}\lambda(\delta)=0
\Rightarrow \lim_{t \rightarrow
\infty}\Big(-ln(\lambda(e^{-t}))\Big)=\infty.\]
\end{nota2}

Now we are going to verify that this map is well-defined and then
we shall study its properties.

          \paragraph {Well defined}

Each point in $(T,v)$ has a unique image.

Let $x \in (T,v), \quad$ and $ F,G \in end(T,v),\quad t_0,t_1 \in
[0,\infty)$ such that $F(t_0)=x=G(t_1)$. We already know that
$t_0=t_1$ and $F(t)=G(t) \ \ \forall t\in [0,t_0] \Rightarrow
d(F,G)\leq e^{-t_0}$ and by (1) $\Rightarrow d(f(F),f(G))\leq
\lambda(e^{-t_0})$.

This means that the branches of the tree (the image of the
isometric embeddings of $[0,\infty)$) $f(F)$ and $f(G)$ coincide
at least until the image of $x$ and so, the image of $x$ is
unique.

Now, $d(f(F),f(G))=e^{-sup\{s\geq 0/f(F)(s)=f(G)(s)\}}\leq
\lambda(e^{-t_0}) \Leftrightarrow$ \linebreak \mbox{$sup\{s\geq
0/f(F)(s)=f(G)(s)\}\geq -ln(\lambda(e^{-t_0}))$} and in
particular\linebreak
$f(F)\Big(-ln(\lambda(e^{-t_0}))\Big)=f(G)\Big(-ln(\lambda(e^{-t_0}))\Big).$
Since the image by $\hat{f}$ doesn't depend on the representative,
is well defined.


\begin{prop2}
If $f$ is a uniformly continuous map between the end spaces then
$\hat{f}$, is \textbf{Lipschitz of constant 1}.
\end{prop2}

\begin{proof} Given $x, x' \in(T,v)$, we are going to prove that
\mbox{$d(\hat{f}(x),\hat{f}(x'))\leq d(x,x')$}:

\underline{Case I}. If the points are in the same branch of the
tree.

Then, there exists $F\in end(T,v) \ $ such that $\ x=F(t_0) $ and
$ \ x'=F(t_1)$ with $t_1>t_0$, and hence $d(x,x')=t_1-t_0$.

The images are $f(F)\Big(-ln(\lambda(e^{-t_0}))\Big)$ and
\mbox{$f(F)\Big(-ln(\lambda(e^{-t_1}))\Big)$} and it is clear that
\[d(\hat{f}(x),\hat{f}(x'))=\Big{\vert}
-ln\Big(\lambda(e^{-t_0})\Big)-\Big(-ln\Big(\lambda(e^{-t_1})\Big)\Big)\Big{\vert}.\]

We can avoid the absolute value, since $\lambda :[0,1]\to [0,1]$
is non-decreasing: \mbox{$t_1>t_0 \Rightarrow e^{-t_1}<e^{-t_0}
\Rightarrow \lambda(e^{-t_1})\leq \lambda(e^{-t_0})\Rightarrow
ln(\lambda(e^{-t_1}))\leq ln(\lambda(e^{-t_0}))$}
Hence,\begin{equation}
d(\hat{f}(x),\hat{f}(x'))=ln(\lambda(e^{-t_1}))-ln(\lambda(e^{-t_0})).
\end{equation}

The convexity of $\lambda$ will allow us to relate this distance
with $t_1-t_0$. The idea is that if we have two points on the line
$y=Kx \quad (y_1=Kx_1,\ y_2=Kx_2)$, the difference between the
logarithms only depends on the proportion between $x_1$ and $x_2$
since
$ln(Kx_1)-ln(Kx_2)=ln(\frac{Kx_1}{Kx_2})=ln(\frac{x_1}{x_2})$ and
in our case, this proportion between two points in the image of
$\lambda$ may be bounded using the line which  joins the $(0,0)$
with the first point since $\lambda$ is convex.

\begin{figure}[h]
\centering \scalebox{1}{\includegraphics{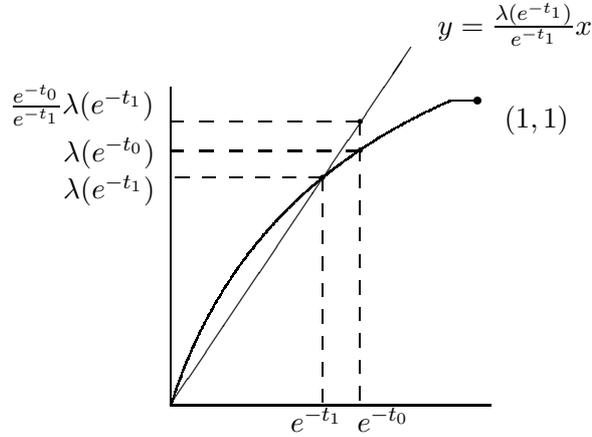}} \caption{The
function $\lambda$ is convex.}
\end{figure}

Since $\lambda:[0,1] \rightarrow [0,1]$ is convex and
$e^{-t_1}<e^{-t_0}$, we have that $\lambda(e^{-t_0})\leq
\frac{e^{-t_0}}{e^{-t_1}}\lambda(e^{-t_1})\Rightarrow$ since the
natural logarithm is an increasing function, substituting in (2),
\[d(\hat{f}(x),\hat{f}(x'))\leq
ln(\frac{e^{-t_0}}{e^{-t_1}}\lambda(e^{-t_1}))-ln(\lambda(e^{-t_1}))=
ln(e^{t_1-t_0})=t_1-t_0=d(x,x').\]

\underline{Case II}. Suppose that $x,\ x'$ are not in the same
branch. Then there exist $F,\ G \in end(T,v) \ $ and $\quad t_0,
t_1 \in R$ such that \mbox{$x=F(t_0),\ x'=G(t_1)$} and let
$t_2=sup\{s| \ F(s)=G(s)\}$. Then $t_2\leq t_0, t_1$ (if it was
not then $x$ and $x'$ would be in the same branch) and \quad
$d(x,x')=t_0-t_2+t_1-t_2=d(x,y)+d(y,x')$ with $y=F(t_2)=G(t_2)$.

Nevertheless, $\hat{f}(F(t_2))=\hat{f}(y)=\hat{f}(G(t_2))$ and by
case I, we can see that $d(\hat{f}(x),\hat{f}(x')) \leq
d(\hat{f}(x),\hat{f}(y))+d(\hat{f}(y),\hat{f}(x')) \leq
d(x,y)+d(y,x')=d(x,x').$ \end{proof}


\begin{nota} Being Lipschitz, the induced map $\hat{f}$ is
uniformly continuous.
\end{nota}

           \paragraph {Metrically proper between trees}

\begin{prop2}
If $f$ is a uniformly continuous map between the end spaces then
$\hat{f}$ is metrically proper between trees.
\end{prop2}

\begin{proof} We have already proved the continuity.

\underline{Rooted}. We assumed $\lambda(1)=1$ and the image of the
root will be the image of $F(0)$ for any $F\in end(T,v)$, thus
\[\hat{f}(v)=\hat{f}(F(0))=f(F)\Big(-ln(\lambda(e^{0}))\Big)=f(F)(0)=w.\]

\underline{Metrically proper}. We need to show that $\forall M>0
\quad \exists N>0 \ \ $ such that $\quad
\hat{f}^{-1}(B(w,M))\subset B(v,N)$.

(This is equivalent to say that the inverse image of bounded sets
is bounded.).

$\hat{f}^{-1}(B(w,M))=\{x\in T|\ -ln(\lambda(e^{-\parallel
x\parallel}))<M\}$. By remark \ref{no decreciente}
$-ln(\lambda(e^{-t}))$ is non-decreasing and by remark
\ref{limite} it is clear that $\exists N>0 \ \ $ such that $\
\forall t\geq N \quad -ln(\lambda(e^{-t}))>M$, and hence,
$\hat{f}^{-1}(B(w,M))\subset B(v,N)$. \end{proof}

\begin{ejp2}\label{ejp lipschitz}

\begin{figure}[h]
\centering \scalebox{0.8}{\includegraphics{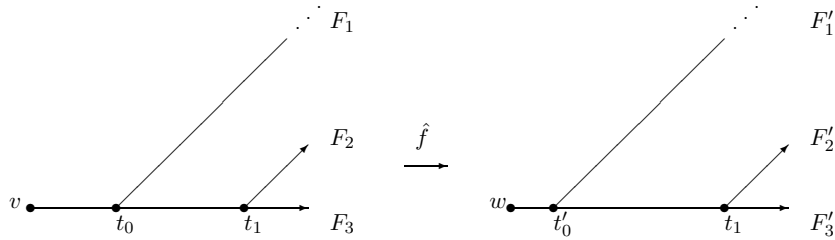}}
\caption{$\hat{f}:(T,v)\to (T',w)$ induced by a map $f$ between
the ends.}
\end{figure}

\end{ejp2}

Let $f: end(T,v) \to end(T',w)$ such that $f(F_i)=F'_i$ for $i=1,2
\mbox{ or } 3$. A modulus of continuity can be defined as in lemma
\ref{modulus}

\[\varrho(\delta):=\left\{
\begin{tabular}{l} $0 \qquad \qquad
\qquad \mbox{\ \ if }
\delta<e^{-t_1},$\\

$e^{-t_1} \quad \qquad \qquad \mbox{ if }
e^{-t_1}\leq \delta<e^{-t_0},$\\

$e^{-t'_0} \quad \qquad \qquad \mbox{ if } e^{-t_0}\leq
\delta<1,$\\

$1\qquad \qquad \qquad \mbox{\ \ if } 1\leq \delta.$\end{tabular}
 \right.\]
\vspace{0.5cm}

Now, if we construct $\omega$ as in \ref{omega}, we have

\[\omega(\delta):=\left\{
\begin{tabular}{l} $\frac{e^{-t'_0}}{e^{-t_0}} \cdot \delta \quad \qquad
\qquad \mbox{\ if }
\delta<e^{-t_0},$\\

$\varrho_{e^{-t_0},1}(\delta) \ \quad \qquad \qquad \mbox{ if }
e^{-t_0}\leq \delta<1,$\\

$1 \quad \qquad \qquad \qquad \qquad \mbox{ if }  1\leq
\delta.$\end{tabular}
 \right.\]
\vspace{0.5cm}


 Making $\lambda:=\omega |_{[0,1]}$

\begin{figure}[h]
\centering
\scalebox{1}{\includegraphics{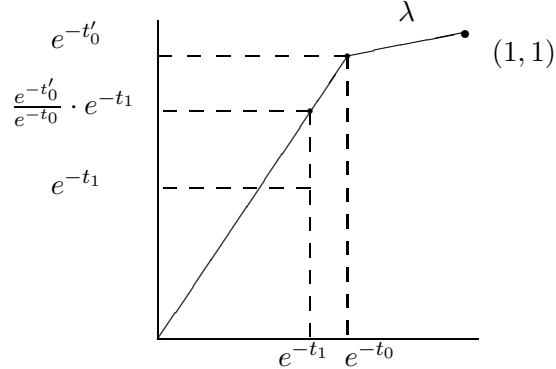}}
\caption{$\lambda$ can be defined as follows.}
\end{figure}

We can check that $\hat{f}$ is Lipschitz of constant $\leq 1$ from
$F_i[0,t_0]$ to $F'_i[0,t'_0]$ and an isometry between
$F_i[t_0,\infty)$ and $F_i[t'_0,\infty)$ for $i=1,2 \mbox{ or } 3$
with $\hat{f}(F_i(t_0))=F'_i(t'_0)$ and
$\hat{f}(F_j(t_1))=F'_j(t_1-t_0+t'_0) \in F'_j(t'_0,t_1)$ for
$j=2,3$. Thus, $f$ is a non-expansive map.

           \paragraph {Some remarks}

\begin{definicion2} Let $X_1, X_2$ two metric spaces, a map
$f: X_1 \to X_2$ is \emph{bornologous} if for every $R>0$ there is
$S>0$ such that for any two points $x,x'\in X_1$ with $d(x,x')<R$,
$d(f(x),f(x'))<S$.
\end{definicion2}

\begin{definicion2} The map is \emph{coarse} if it is metrically proper
and bornologous.
\end{definicion2}

\begin{nota2} The induced map between the trees $\hat{f}$ from the uniformly
continuous map between the end spaces is coarse.
\end{nota2}

\begin{proof} We have already seen that it is metrically proper.
Since it is lipschitz of constant 1, then $\forall S>0 \quad
\exists R>0 \ $ such that $\quad d(x,x')<S \Rightarrow
d(\hat{f}(x),\hat{f}(x'))<R $, with only making $R=S.$
\end{proof}

\begin{definicion2} A map is \emph{proper} if the inverse image of
any compact set is compact.
\end{definicion2}

We studied if $\hat{f}$ is also proper but it isn't.

\textbf{Counterexample.} Let $U$ a ultrametric space consisting of
a countable family, non finite, of points $\{x_n\}_{n\in
\mathbb{N}}$ with $d(x_i,x_j)=d_1 \ \forall i\neq j$ and another
point, $\{y\}$ with $d(y,x_i)=d_0 \ \forall i$, suppose $d_0>d_1$,
and let $U'$ the same family of points $\{x_n\}_{n\in \mathbb{N}}$
with distance $d_1$ among them and another point, $\{y'\}$ with
$d(y',x_i)=d_0'$ and $d'_0>d_0$. Both spaces are uniformly
discrete and the map $f$ which sends $y$ to $y'$, and $x_i$ to
$x'_i$ is obviously uniformly continuous. Now we can find a
compact set $K$ in $T_{U'}$ such that its inverse image under
$\hat{f}: T_U \rightarrow T_{U'}$ is not compact.

Consider $t_0=-ln(d_0) \ t'_0=-ln(d'_0)$ and $t_1=-ln(d_1)$. The
induced trees are,

\begin{figure}[h]
\centering \scalebox{0.8}{\includegraphics{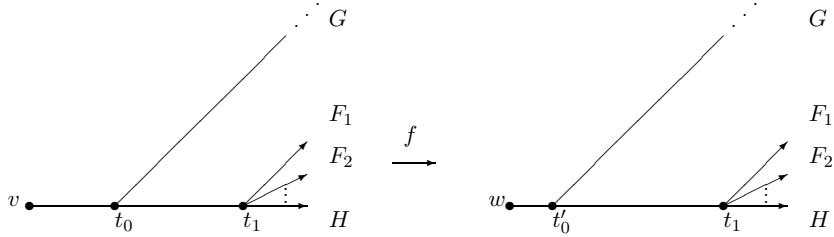}} \caption{A
metrically proper map between the trees which is not proper.}
\end{figure}

Let $K=\bar{B}(w,t_1)$ which is obviously compact, we can see that
$\hat{f}^{-1}(K)$ is not compact.

The image by $\hat{f}$ of the arc $[v,x_i(t_0)]\approx [0,t_0]$
will be $[w,x'_i(t'_0)] \approx [0,t'_0]$ (with $t'_0<t_0$). By
convexity of $\lambda$, $\forall t>t_0 \quad e^{-t}<e^{-t_0}
\Rightarrow \lambda(e^{-t})\geq
\frac{e^{-t}}{e^{-t_0}}\lambda(e^{-t_0}) \Rightarrow
-ln(\lambda(e^{-t}))\leq -ln(e^{t_0-t}\cdot \lambda(e^{-t_0}))=
t-t_0+t'_0.$ Let $\epsilon=t_0-t'_0>0$ then
$\hat{f}(B(v,t))\subset B(w,t-\epsilon)\Rightarrow$ in particular
$B(v,t_1+\epsilon)\subset \hat{f}^{-1}(B(w,t_1))$, and so the
inverse image by $\hat{f}$ of K is a closed ball of radius greater
than $t_1$, and since $T_U$ is not locally compact at $t_1$, this
set is not compact.

    \subsection{Uniformly continuous map between end spaces induced by a metrically proper
    map between trees}

\begin{prop2}\label{unica rama en imagen} Given $(T,v)$ and $(T',w)$ two geodesically complete rooted
$\mathbb{R}$-trees, and $f$ a metrically proper map between trees
then $\forall F\in end(T,v)\linebreak \exists ! \ G\in end(T',w) \
$ such that $ \ G[0,\infty)\subset
im\Big(\hat{f}(F[0,\infty))\Big).$ Thus, $f$ induces a map between
the end spaces of the trees.
\end{prop2}

\begin{proof} \underline{Existence}. Let $F\in end(T,v)$.
$\forall n\in \mathbb{N}, \quad \exists t_n>0 \ $ such that $
\quad \hat{f}^{-1}(B(w,n))\subset B(v,t_n)$. By proposition
\ref{imagensubarboles} $\exists ! \ c'_n\in \partial B(w,n) \quad
$ such that $ \quad f\Big(T_{F(t_n)}\Big)\subset T'_{c'_n}$.

Define $G:[0,\infty) \rightarrow T$ such that $G|_{[0,n]}\equiv
[w,c'_n] \ \forall n\in \mathbb{N}$. It is clear that this $G$ is
well defined, $G\in end(T,v)$ and \ $G[0,\infty) \subset
im\Big(\hat{f}(F[0,\infty))\Big)$ q.e.d.

\underline{Uniqueness}: Let $H \in end(T',w) \quad H \neq G$ with
$d(H,G)=d_0>0$ we are going to show that $H[0,\infty)$ can't be
contained in the image of $F[0,\infty)$ by $f$.


\begin{figure}[h]
\centering
\scalebox{1}{\includegraphics{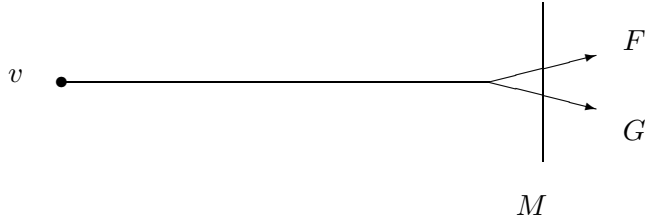}}
\caption{Uniqueness.}
\end{figure}

Let $M>-ln(d_0)$. As we know, $\exists N>0 \ \ $ such that $ \quad
\hat{f}^{-1}(B(w,M))\subset B(v,N)$. By proposition
\ref{imagensubarboles} $\exists ! \ c'_M\in
\partial B(w,M) \ $ such that $ \quad f\Big(T_{F(N)}\Big)\subset
T'_{c'_M}$ and it is clear that $c'_M=G(M)$ but since
$M>-ln(d_0)=sup\{s/G(s)=H(s)\} \Rightarrow H(M) \neq c'_M
\Rightarrow \hat{f}(F[N,\infty))\cap H[0,\infty)=\emptyset$.

Moreover $(T,v),\ (T',w)$ are metric spaces and $\hat{f}$ is
continuous, then $\hat{f}(F[0,N])$ is the continuous image of a
compact set and so it is compact in a metric space and hence, it
is bounded $\Longrightarrow H[0,\infty) \not \subset
\hat{f}(F[0,N])$.

Hence $H[0,\infty) \not \subset \hat{f}(F[0,\infty))$ and $G$ is
unique. \end{proof}

\begin{definicion2} Let $f: (T,v) \rightarrow (T',w)$ a metrically proper map
between trees, define $\tilde{f}:end(T,v)\rightarrow end(T',w)$
with $\tilde{f}(F)=G \in end(T',w) \ $ such that $ \quad
G[0,\infty)\subset f(F[0,\infty))$.
\end{definicion2}

\begin{prop2} $\tilde{f}$ is uniformly continuous.
\end{prop2}

\begin{proof} Let $\epsilon'>0$. Consider $\epsilon<\epsilon'$.
Then there exists $\delta>0$ such that
$\hat{f}^{-1}(B(w,-ln\epsilon))\subset B(v,-ln\delta) \Rightarrow
\hat{f}(T\backslash B(v,-ln\delta))\subset T' \backslash
B(w,-ln\epsilon)$. Once again, the idea of \ref{imagensubarboles}.

Consider two branches $F$ and $G$ on $(T,v)$ (two elements in the
end space of the tree) with $d(F,G) \leq \delta$, this means that
$F(t)=G(t)$ on $[0,-ln\delta ]$. Let
$c=F(-ln\delta)=G(-ln\delta)$, we have seen that $f(c)\in
T'\backslash B(w,-ln\epsilon)$, then $\tilde{f}(F)=\tilde{f}(G)$
at least on $[0,-ln\epsilon]$, thus $d(\hat{f}(F),\hat{f}/G))\leq
\epsilon<\epsilon'$ and hence $\tilde{f}$ is uniformly continuous.
\end{proof}

\begin{prop2}\label{equiv de aplic}  Let $f, f':(T,v)\rightarrow (T',w')$ two metrically
proper maps between trees, $f\sim f' \Leftrightarrow
\tilde{f}=\tilde{f'}$ (this is, if they induce the same map
between the end spaces).
\end{prop2}

\begin{proof} Suppose $f\sim f'$ and they don't induce the same map.
\quad $\exists F\in end(T,v) \ $ such that $ \quad
\tilde{f}(F)=G\neq H=\tilde{f'}(F). $ \quad Let $M>-ln (d(G,H))>0
$, \quad $N_{0}>0 \ \ $ such that $ \quad f^{-1}(B(w,M))\subset
B(v,N_{0})$ and $f'^{-1}(B(w,M))\subset B(v,N_{0})$ \ then,
$\forall N>N_{0},$ let \ $c=F(N)\in \partial B(v,N)$ \ and by
\ref{unica rama en imagen} $f_{\mathcal{T}_N}(T_c)=T'_{G(M)}\neq
T'_{H(M)}=f'_{\mathcal{T}_N}(T_c)$ which are different because
$M>-ln(d(G,H))$ which is a contradiction with $(f\sim f')$.

Conversely, suppose that $f$ \ and $f'$ induce the same map
between the end spaces. Since they are metrically proper \quad
$\forall M>0 \quad \exists N_{1}>0$ \ such that \quad
$f(T\backslash B(v,N_1))\subset T'\backslash B(w,M)$ \ and \quad
$\exists N_{2}>0$ \ such that \ $f(T\backslash B(v,N_2))\subset
T'\backslash B(w,M)$. Let $N_{0}=\max \{ (N_{1},N_{2})\}
\Rightarrow f(T\backslash B(v,N_0))\subset T'\backslash B(w,M)$
and $\forall N>N_0$, we have two maps as we saw in \ref{aplic de
subarboles}.
\[f_{\mathcal{T}_{N}},f'_{\mathcal{T}_{N}}:\mathcal{T}_{N}
\longrightarrow \mathcal{T'}_M.\]

The induced map between the end spaces is the same, hence $\forall
F\in end(T,v) \quad \exists !\ G\in end(T',w) \ \ $ such that $
\quad \tilde{f}(F)=G=\tilde{f'}(F)$. Consider $T_{F(N)}$ any
subtree of $T\backslash B(v,N)$, and it is clear that the image of
$F[N,\infty)$ whether by $f$ or $f'$ must be contained in
$T'_{G(M)}$ since  $G[0,\infty)$ is contained in the image of
$F[0,\infty)$. Thus
$f_{\mathcal{T}_{N}}(T_{F(N)})=T'_{G(M)}=f'_{\mathcal{T}_{N}}(T_{F(N)})
\Longrightarrow f\sim f'.$ \end{proof}

\begin{cor2} Let $f, f':(T,v)\rightarrow (T',w')$ two metrically
proper maps between trees, $f \simeq_{Mp} f' \Leftrightarrow
\tilde{f}=\tilde{f'}$
\end{cor2}

\begin{cor2} In any equivalence class of metrically proper maps
between the trees there is a representative which is Lipschitz of
constant 1 and that restricted to the complement of some open ball
centered on the root, the restriction to the branches is
injective.
\end{cor2}

\begin{nota2} Given $f:(T,v)\rightarrow (T',w')$ a surjective
metrically proper map between trees, arise the question if the
induced map between the end spaces would also be surjective. It is
not.
\end{nota2}

\newpage

\textbf{Counterexample.}

\begin{figure}[h]
\centering
\scalebox{1}{\includegraphics{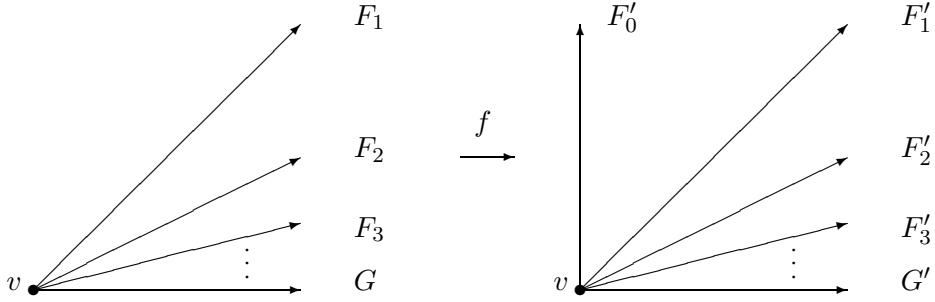}}
\caption{A surjective metrically proper map between the trees which induces a non surjective map between the ends.}
\end{figure}

Let \[ f(F_n(t))= \left\{ \begin{tabular}{l} $F'_0(t) \qquad
\mbox{ if }\ t\in [0,\frac{1}{4}],$\\
$F'_0(\frac{1}{4} + 4n(t-\frac{1}{4})) \qquad \mbox{ if }\
t\in [\frac{1}{4},\frac{1}{2}],$\\
$F'_0(2n(1-t)) \quad \mbox{ if } t\in(\frac{1}{2},1],$\\
$F'_n(t-1) \quad \mbox{ if } t\in (1,\infty).$
\end{tabular}
 \right.
\]
and \[f(G(t))=G'(t).\]

$f$ is clearly rooted, continuous, surjective and metrically
proper but if we consider the induced map between the end spaces
we find that $F'_0$ is not contained in the image of any branch of
$T$.

\section{Equivalence of categories}

Consider the categories,

\vspace{0.5cm}

$\mathcal{T}$: Geodesically complete rooted $\mathbb{R}$-trees and
metrically proper homotopy classes of metrically proper maps
between trees.

\vspace{0.5cm}

$\mathcal{U}$: Complete ultrametric spaces of diameter $\leq 1$
and uniformly continuous maps.
\vspace{0.5cm}
Define the functors,

$\xi : \mathcal{T}\longrightarrow  \ \mathcal{U \ }$ such that
$\xi(T,v)=end(T,v)$ for any geodesically complete rooted
$\mathbb{R}$-tree and $\xi ([f]_{Hp})=\tilde{f}$ \ for any
metrically proper homotopy class of a metrically proper map
between trees.

$\eta : \mathcal{U}\longrightarrow \mathcal{\ T}$ such that
$\eta(U)=T_{U}$ for any complete ultrametric space of diameter
$\leq 1$ and $\eta (f)=[\hat{f}]$ \ for any uniformly continuous
map.

\begin{prop} $\xi : \mathcal{T}\longrightarrow  \ \mathcal{U \ }$
is a functor.
\end{prop}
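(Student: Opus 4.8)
The plan is to verify the two functor axioms directly: that $\xi$ respects identities and that it respects composition. For the identity, I would take a geodesically complete rooted $\mathbb{R}$-tree $(T,v)$ and observe that the identity map $\mathrm{id}_T$ is trivially rooted, continuous and metrically proper between trees, so $[\mathrm{id}_T]_{Hp}$ is a morphism in $\mathcal{T}$; then I must check $\xi([\mathrm{id}_T]_{Hp}) = \widetilde{\mathrm{id}_T} = \mathrm{id}_{end(T,v)}$. This is immediate from the definition of $\tilde{f}$: for any $F \in end(T,v)$, the unique $G \in end(T,v)$ with $G[0,\infty) \subset \mathrm{id}_T(F[0,\infty)) = F[0,\infty)$ is $G = F$ itself, by uniqueness in Proposition~\ref{unica rama en imagen}. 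Hence $\widetilde{\mathrm{id}_T}(F) = F$ for all $F$.

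The main content is functoriality on composition. Given morphisms $[f]_{Hp}:(T,v)\to (T',w)$ and $[g]_{Hp}:(T',w)\to (T'',z)$ in $\mathcal{T}$, I must first check that $g\circ f$ is again metrically proper between trees — rootedness and continuity are clear, and metric properness follows because the preimage under $g$ of a bounded set is bounded and then the preimage of that under $f$ is bounded. I must also check that the homotopy class of $g\circ f$ depends only on the classes of $f$ and $g$; by Proposition~\ref{equiv de aplic} (together with its Corollary), $f\sim f'$ implies $\tilde f=\tilde{f'}$, and one shows $g\circ f\sim g\circ f'$ and $g\circ f\sim g'\circ f$ using the subtree description of $\sim$ from Definition~\ref{relequiv}: a cut set in $T'$ pulls back along $f$ to (a refinement of) a cut set in $T$, so agreement of $g_{\mathcal{T}_N}$ and $g'_{\mathcal{T}_N}$ far out, and of $f_{\mathcal{T}_N}$ and $f'_{\mathcal{T}_N}$ far out, propagates to agreement of $(g\circ f)_{\mathcal{T}_N}$ and the corresponding composites. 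Then $\xi$ is well-defined on composites, and it remains to prove
\[
\widetilde{g\circ f} = \tilde g \circ \tilde f
\]
as maps $end(T,v)\to end(T'',z)$.

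For this last equality I would argue pointwise. Fix $F\in end(T,v)$. By definition $\tilde f(F)$ is the unique $G\in end(T',w)$ with $G[0,\infty)\subset f(F[0,\infty))$, and $\tilde g(G)$ is the unique $H\in end(T'',z)$ with $H[0,\infty)\subset g(G[0,\infty))$. Since $G[0,\infty)\subset f(F[0,\infty))$, applying $g$ gives $g(G[0,\infty))\subset g(f(F[0,\infty))) = (g\circ f)(F[0,\infty))$, hence $H[0,\infty)\subset (g\circ f)(F[0,\infty))$. By the uniqueness clause of Proposition~\ref{unica rama en imagen} applied to the metrically proper map $g\circ f$, this $H$ is exactly $\widetilde{g\circ f}(F)$. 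Therefore $\widetilde{g\circ f}(F) = H = \tilde g(G) = \tilde g(\tilde f(F))$, as required. The one point demanding a little care — and the place I expect to spend the most effort — is the well-definedness of $\xi$ on homotopy classes, i.e. checking that $f\simeq_{Mp}f'$ and $g\simeq_{Mp}g'$ force $g\circ f\simeq_{Mp}g'\circ f'$; this is handled by translating everything through the subtree maps $f_{\mathcal{T}_N}$ and invoking Proposition~\ref{equiv de aplic} and Corollary (the characterization $f\simeq_{Mp}f'\Leftrightarrow\tilde f=\tilde{f'}$), after which the computation $\widetilde{g\circ f}=\tilde g\circ\tilde f$ shows the composite class is determined.
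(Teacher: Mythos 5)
Your proof is correct and follows essentially the same route as the paper: the identity is immediate, and composition is handled exactly as the paper does, by applying the uniqueness clause of Proposition~\ref{unica rama en imagen} to $g\circ f$ to conclude $\widetilde{g\circ f}=\tilde g\circ\tilde f$. Your additional checks (that $g\circ f$ is metrically proper between trees, and that the composite class is independent of representatives, which follows most cleanly from Proposition~\ref{equiv de aplic}) only make explicit what the paper leaves implicit.
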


\begin{proof} $\xi(id_{(T,v)})=id_{end(T,v)}$ is obvious.
\vspace{0.5cm}

Let $[f]:(T,v) \rightarrow (S,w),\quad [g]:(S,w) \rightarrow
(R,z)$ two equivalence classes of metrically proper maps between
trees then
\[\xi([g]\circ [f])=\xi([g])\circ \xi([f]).\]
By \ref{unica rama en imagen}, the induced maps between the end
spaces are clearly the same. \end{proof}

\begin{prop} $\eta : \mathcal{U}\longrightarrow \mathcal{\ T}$ is
a functor.
\end{prop}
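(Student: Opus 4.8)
The plan is to check the two functor axioms for $\eta$, and the main device will be a \emph{round--trip identity}: for every uniformly continuous $h\colon U\to U'$ between complete ultrametric spaces of diameter $\le 1$ one has $\widetilde{\widehat h}=h$ once $U$ and $U'$ are identified with $end(T_U)$ and $end(T_{U'})$ via the isometry of Proposition~\ref{isometry}. I would read this off the constructions directly: $\widehat h$ sends the ray $F_\alpha$ of $T_U$ with image $\alpha\times[0,\infty)$ into the ray $F_{h(\alpha)}$ of $T_{U'}$ with image $h(\alpha)\times[0,\infty)$, by $\widehat h(F_\alpha(t))=h(F_\alpha)\bigl(-\ln(\lambda(e^{-t}))\bigr)=F_{h(\alpha)}\bigl(-\ln(\lambda(e^{-t}))\bigr)$; since $t\mapsto-\ln(\lambda(e^{-t}))$ equals $0$ at $t=0$, is non-decreasing (Remark~\ref{no decreciente}) and continuous ($\lambda$ being continuous and positive on $(0,1]$), and tends to $\infty$ as $t\to\infty$ (Remark~\ref{limite}), it is a continuous non-decreasing surjection of $[0,\infty)$ onto itself, so $\widehat h$ maps $F_\alpha[0,\infty)$ \emph{onto} $F_{h(\alpha)}[0,\infty)$. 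Hence $\widetilde{\widehat h}(F_\alpha)$, being the unique end of $T_{U'}$ whose image lies inside $\widehat h(F_\alpha[0,\infty))$, is $F_{h(\alpha)}$, i.e.\ $\widetilde{\widehat h}=h$. I would also observe here that this makes $\widetilde{\widehat h}$, hence (by Proposition~\ref{equiv de aplic}) the class $[\widehat h]$, independent of the particular admissible convex modulus $\lambda$ used to build $\widehat h$, so that $\eta$ is well defined on morphisms in the first place.

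Next I would invoke two facts: $\xi$ is a functor (the preceding proposition), and $\xi$ is faithful, i.e.\ two metrically proper maps between trees represent the same morphism of $\mathcal T$ iff they induce the same map of end spaces (this is Proposition~\ref{homotopy} together with Proposition~\ref{equiv de aplic}). The axioms for $\eta$ then follow formally. For identities, applying $\xi$ to $[\widehat{id_U}]$ and to $[id_{T_U}]$ gives $\xi([\widehat{id_U}])=\widetilde{\widehat{id_U}}=id_U=\widetilde{id_{T_U}}=\xi([id_{T_U}])$ by the round--trip identity, so faithfulness of $\xi$ yields $\eta(id_U)=[\widehat{id_U}]=[id_{T_U}]=id_{\eta(U)}$; one can also check this by hand, since for $id_U$ the modulus $\varrho(\delta)=\delta$ runs through \ref{pares}--\ref{omega} to give $\lambda=id_{[0,1]}$, whence $\widehat{id_U}(x)=F(\|x\|)=x$. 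For composition, given uniformly continuous $f\colon U_1\to U_2$ and $g\colon U_2\to U_3$, I would first note that $\widehat g\circ\widehat f$ is rooted, continuous and metrically proper --- a composite of metrically proper maps between trees is one, since preimages of bounded sets stay bounded --- and hence is a bona fide representative of $[\widehat g]\circ[\widehat f]=\eta(g)\circ\eta(f)$; then, applying $\xi$ and using the round--trip identity twice together with functoriality of $\xi$,
\[\xi\bigl([\widehat{g\circ f}]\bigr)=\widetilde{\widehat{g\circ f}}=g\circ f=\widetilde{\widehat g}\circ\widetilde{\widehat f}=\xi([\widehat g])\circ\xi([\widehat f])=\xi\bigl([\widehat g]\circ[\widehat f]\bigr),\]
so faithfulness of $\xi$ gives $\eta(g\circ f)=\eta(g)\circ\eta(f)$.

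The step carrying the real weight is not in this proposition but is Proposition~\ref{equiv de aplic}, the equivalence ``same induced end map $\Leftrightarrow$ same morphism of $\mathcal T$''; I am free to use it, and once it is in hand everything reduces to the round--trip identity and routine bookkeeping, the only point requiring care being that the identification $U\approx end(T_U)$ of Proposition~\ref{isometry} be used consistently, since $\eta$ is defined by routing through end spaces rather than on the ultrametric spaces themselves. If one wishes to avoid $\xi$ in the composition step, I would argue directly instead: $\widehat f$ sends $F_\alpha[0,\infty)$ onto $F_{f(\alpha)}[0,\infty)$ and $\widehat g$ then sends that onto $F_{g(f(\alpha))}[0,\infty)$, so $\widehat g\circ\widehat f$ and $\widehat{g\circ f}$ both send $F_\alpha[0,\infty)$ onto $F_{g(f(\alpha))}[0,\infty)$, hence induce the same map of end spaces and are metrically properly homotopic by Proposition~\ref{equiv de aplic}; similarly $\widehat{id_U}\sim id_{T_U}$.
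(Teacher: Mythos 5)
Your proof is correct and follows essentially the same route as the paper: both reduce the identity and composition axioms to Proposition~\ref{equiv de aplic}, by observing that $\widehat{g\circ f}$ and $\widehat g\circ\widehat f$ (resp.\ $\widehat{id_U}$ and $id_{T_U}$) induce the same map between end spaces and hence represent the same morphism of $\mathcal{T}$. The extra detail you supply --- the round-trip identity $\widetilde{\widehat h}=h$ via surjectivity of $t\mapsto-\ln(\lambda(e^{-t}))$, and the resulting independence of $[\widehat h]$ from the choice of $\lambda$ --- is exactly what the paper leaves implicit here (and later invokes as ``immediate'' when proving $\xi$ is full), so it is a welcome but not divergent elaboration.
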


\begin{proof} $\eta (id_U)=\eta (id_{end(T_U)}) = id_{T_U}$ is obvious.
\vspace{0.5cm}

Let $f: U_1 \rightarrow U_2$, and \ $g: U_2 \rightarrow U_3$ two
uniformly continuous maps then \[\eta (g \circ f)=\eta(g)\circ
\eta(f).\] This follows immediately from \ref{equiv de aplic}
since the maps between the end spaces are the same. \end{proof}

\begin{lema} Let $S:A \rightarrow C$ be a functor between two categories.
$S$ is an equivalence of categories if and only if is $full,\
faithful$ and each object $c\in C$ is isomorphic to $S(a)$ for
some object $a\in A$.
\end{lema}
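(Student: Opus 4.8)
The plan is to prove the two implications separately, the substantive one being the ``if'' direction, which amounts to constructing a quasi-inverse functor.

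\textbf{Sufficiency.} Suppose $S$ is full, faithful and essentially surjective, i.e. every object of $C$ is isomorphic to some $S(a)$. I would build a quasi-inverse $R:C\to A$ as follows. For each object $c\in C$, use essential surjectivity (and the axiom of choice) to pick an object $R(c)\in A$ together with an isomorphism $\theta_c\colon S(R(c))\to c$ in $C$. For a morphism $g\colon c\to c'$ in $C$, the composite $\theta_{c'}^{-1}\circ g\circ \theta_c\colon S(R(c))\to S(R(c'))$ is, because $S$ is full and faithful, equal to $S(\psi)$ for a \emph{unique} $\psi\colon R(c)\to R(c')$; set $R(g):=\psi$. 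Uniqueness of such preimages under $S$ immediately yields $R(\mathrm{id}_c)=\mathrm{id}_{R(c)}$ and $R(g'\circ g)=R(g')\circ R(g)$, so $R$ is a functor, and by the very definition of $R$ on morphisms the family $\{\theta_c\}$ is a natural isomorphism $S\circ R\Rightarrow \mathrm{id}_C$. It then remains to produce a natural isomorphism $\mathrm{id}_A\Rightarrow R\circ S$: for $a\in A$ the isomorphism $\theta_{S(a)}\colon S(R(S(a)))\to S(a)$ is $S(\varphi_a)$ for a unique isomorphism $\varphi_a\colon R(S(a))\to a$, and faithfulness of $S$ reduces the naturality square for $\varphi$ to the already-known naturality of $\theta$ evaluated on morphisms of the form $S(h)$. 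Hence $S$ is an equivalence.

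\textbf{Necessity.} Conversely, assume $S$ is an equivalence, with a functor $R\colon C\to A$ and natural isomorphisms $\alpha\colon \mathrm{id}_A\Rightarrow RS$ and $\beta\colon SR\Rightarrow \mathrm{id}_C$. Then $\beta$ exhibits each $c$ as isomorphic to $S(R(c))$, so $S$ is essentially surjective. For full faithfulness I would run the standard hom-set argument: naturality of $\alpha$ shows that the composite $A(a,a')\xrightarrow{\,S\,} C(Sa,Sa')\xrightarrow{\,R\,} A(RSa,RSa')$ is conjugation by the isomorphisms $\alpha_a,\alpha_{a'}$, hence a bijection; in particular $S\colon A(a,a')\to C(Sa,Sa')$ is injective, so $S$ is faithful. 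Applying the identical reasoning to $R$ (using $\beta$) shows $R$ is faithful, so in the bijective composite above the second arrow $R$ is injective, which forces the first arrow $S$ to be surjective on hom-sets. Thus $S$ is full as well.

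\textbf{Main obstacle.} The only delicate point is the ``if'' direction: turning the object assignment $c\mapsto R(c)$ into an honest functor and verifying naturality of the two comparison transformations. Both are handled uniformly by the remark that, since $S$ is full and faithful, $\psi\mapsto S(\psi)$ is a bijection $A(a,a')\to C(Sa,Sa')$, so any diagram in $A$ one wants to commute may be checked after applying $S$, where it reduces to bookkeeping with the chosen isomorphisms $\theta_c$. (One unavoidably invokes the axiom of choice to select the $\theta_c$, which is intrinsic to the general statement.)
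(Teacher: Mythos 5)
Your proof is correct: the quasi-inverse construction in the ``if'' direction and the hom-set conjugation argument in the ``only if'' direction are both carried out accurately. The paper itself gives no proof of this lemma --- it simply cites Mac Lane --- and your argument is precisely the standard one found there, so there is nothing to reconcile with the paper's treatment.
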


This lemma is in \cite{McL}

\begin{teorema}\label{tma equiv} \textbf{(Main theorem)} $\xi : \mathcal{T}\longrightarrow  \ \mathcal{U}$\ is an
equivalence of categories.
\end{teorema}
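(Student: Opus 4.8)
The plan is to invoke the categorical criterion stated in the lemma just before the theorem (from \cite{McL}): a functor is an equivalence of categories if and only if it is full, faithful, and essentially surjective. So the proof naturally splits into three verifications for $\xi : \mathcal{T} \to \mathcal{U}$, each of which can be assembled from the propositions already established in the excerpt.

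First I would handle \emph{essential surjectivity}: given any complete ultrametric space $U$ of diameter $\leq 1$, Proposition \ref{isometry} gives $U \approx end(T_U) = \xi(\eta(U))$, and an isometry of ultrametric spaces is in particular an isomorphism in $\mathcal{U}$. Hence every object of $\mathcal{U}$ is isomorphic to $\xi$ of the object $T_U \in \mathcal{T}$. Next, \emph{faithfulness}: if $[f], [f']: (T,v) \to (T',w)$ are metrically proper homotopy classes with $\xi([f]) = \xi([f'])$, that means $\tilde f = \tilde{f'}$, and Proposition \ref{equiv de aplic} (together with its corollary identifying $\sim$ with $\simeq_{Mp}$, and Proposition \ref{homotopy}) says precisely that $f \sim f'$, i.e. $[f] = [f']$ in $\mathcal{T}$. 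So $\xi$ is injective on each hom-set.

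The remaining, and I expect the most delicate, point is \emph{fullness}: given trees $(T,v), (T',w)$ and a uniformly continuous map $g : end(T,v) \to end(T',w)$, I must produce a metrically proper homotopy class $[f]$ in $\mathcal{T}$ with $\xi([f]) = g$. The candidate is $f = \hat g$: by the construction in Section 7.1, $\hat g : T \to T'$ is Lipschitz of constant $1$ (hence continuous) and metrically proper between trees, so $[\hat g] \in \mathcal{T}$. It then remains to check that $\xi([\hat g]) = \widetilde{\hat g} = g$, i.e. that passing from ends to the tree via $\hat g$ and back via $\tilde{\;\cdot\;}$ recovers the original $g$. For $F \in end(T,v)$, one traces through the definitions: $\hat g(F(t)) = g(F)\big(-\ln(\lambda(e^{-t}))\big)$, so the image of the ray $F[0,\infty)$ is exactly the ray $g(F)[0,\infty)$ reparametrised (using Remark \ref{limite} to see it is an unbounded sub-ray), whence by the uniqueness clause of Proposition \ref{unica rama en imagen} one gets $\widetilde{\hat g}(F) = g(F)$. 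This identity $\widetilde{\hat g} = g$ is the technical heart of the argument, and I would write it out carefully; the bound $d(g(F),g(G)) \leq \lambda(d(F,G))$ coming from Borsuk's modulus is what makes the reparametrisation coherent across all rays. With full, faithful and essentially surjective all in hand, the cited lemma from \cite{McL} finishes the proof. (As a remark, the construction also shows $\eta$ is a quasi-inverse and that the equivalent descriptions via coarse maps and via non-expansive maps listed in the introduction follow by the same machinery, since every class contains a Lipschitz-$1$ representative.)
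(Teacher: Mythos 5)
Your proposal is correct and follows exactly the paper's route: apply the Mac Lane criterion, get essential surjectivity from $U\approx end(T_U)$, faithfulness from Proposition \ref{equiv de aplic}, and fullness from the identity $\widetilde{\hat g}=g$. The only difference is that you spell out the verification $\widetilde{\hat g}=g$ (via the reparametrisation $t\mapsto -\ln(\lambda(e^{-t}))$ and the uniqueness clause of Proposition \ref{unica rama en imagen}), which the paper simply declares immediate.
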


\begin{proof} \ \underline{$\xi $ is full} ( immediate  $f=
\tilde{[\hat{f}]}=\xi(\hat{f})$).\vspace{0.5cm}

\underline{$\xi $ es faithful} (this follows immediately from
proposition \ref{equiv de aplic}).\vspace{0.5cm}

$\forall U\in  \mathcal{U} \quad \exists \ T\in \mathcal{T}$\ such
that \quad $\xi(T)\approx U$. \ (By \ref{metrica} $\xi
(T_{U})\approx U$, with $\ \approx$ isometry). \end{proof}

\begin{ejp} Consider $f:(T,v)\to (T',w)$ a map between the
geodesically complete rooted $\mathbb{R}$-trees
\end{ejp}

\begin{figure}[h]
\centering \scalebox{0.8}{\includegraphics{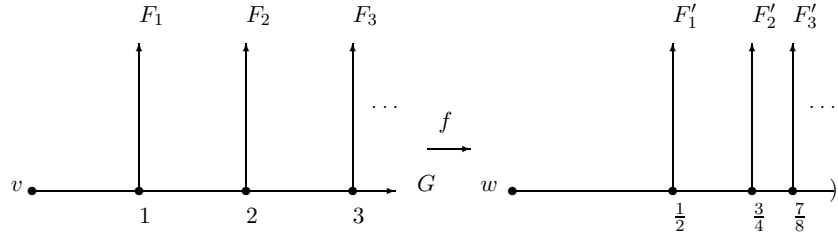}} \caption{A
homeomorphism between the trees which does not induce a map
between the ends.}
\end{figure}

We can easily define an homeomorphism between these trees. Let $f$
be such that $f[n-1,n]=[1-\frac{1}{2^{n-1}},1-\frac{1}{2^n}] \
\forall n\in \mathbb{N}$ with $f|_{[n-1,n]}$ a similarity with
constant $\frac{1}{2^{n}}$ on this arc, and an isometry on the
rest (the vertical lines) with $f(F_n)=F'_n \ \forall n\in
\mathbb{N}$. Then it is obviously an homeomorphism but clearly not
uniform since $f^{-1}$ is not uniformly continuous.

Since $f$ is a non-expansive map, $f^{-1}$ is metrically proper
and hence, it induces a map $\widetilde{f^{-1}}$ from $end(T',w)$
to $end(T,v)$ which is uniformly continuous but $f$ is not
metrically proper (for example $f^{-1}(B(w,1))$ is not bounded)
and it doesn't induce any map from $end(T,v)$ to $end(T',w)$ since
$f(G)$ is not geodesically complete.

$f$ is bornologous but it is not coarse (fails to be metrically
proper) and $f^{-1}$ is not bornologous.

\begin{ejp} We can define also an homeomorphism $f$ between two rooted
geodesically complete $\mathbb{R}$-trees such that $\tilde{f}$ is
a non-uniform homeomorphism between the end spaces.
\end{ejp}

\begin{figure}[h]
\centering \scalebox{0.9}{\includegraphics{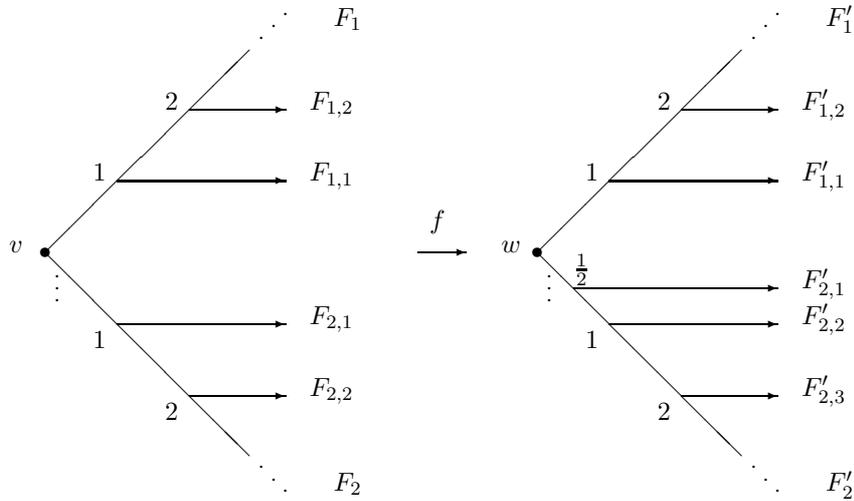}} \caption{A
homeomorphism between the trees which induces a non--uniform
homeomorphism between the ends.}
\end{figure}

Consider these trees $(T,v)$ and $(T',w)$. $(T,v)$ has
$\{F_i\}_{i=1}^{\infty}$ branches such that $F_i\cap F_j=\{v\}$,
and $\forall i$ there are branches $\{F_{i,k}\}_{k=1}^{\infty}$
such that $F_{i,k}=F_i$ on $[0,k]$. $(T',w)$ is quite similar but
$\forall i$ the branches $\{F'_{i,k}\}_{k=1}^{\infty}$ are such
that $F'_{i,k}=F'_i$ on $[0,\frac{k}{i}] \quad \forall k \leq i$
and $F'_{i,k}=F'_i$ on $[0,k-i] \quad \forall k>i$.

Define $f: (T,v)\to (T',w)$ such that $f(F_i(t))=F'_i(\frac{t}{i})
\ \forall t\in [0,i]$  and  $f(F_i(t))=F'_i(t-i+1) \ \forall t\in
[i,\infty) \ \forall i \in \mathbb{N}$, and also
$f(F_{i,k}(t)=F'_{i,k}(t-i+\frac{k}{i}) \ \forall t\in
[i,\infty)$, $\forall k\leq i$ and $f(F_{i,k}(t)=F'_{i,k}(t-i)$ $
\forall t\in [i,\infty)$, $\forall k> i$. Hence, the induced map
between the end spaces $\tilde{f}: end(T,v) \to end(T',w)$ is
$\tilde{f}(F_i)=F'_i$ and $\tilde{f}(F_{i,k})=F'_{i,k} \quad
\forall i,k\in \mathbb{N}$. It is easy to verify that $\tilde{f}$
is an homeomorphism but this homeomorphism is not uniform. Let
$\epsilon<e^{-1}$, $\forall \delta>0$ there exists $N>0$ such that
$e^{-i}<\delta \ \forall i\geq N$. Then, $\forall i>N$
$d(F_i,F_{i,{i}})=e^{-i}<\delta$ and
$d(\tilde{f}(F_i),\tilde{f}(F_{i,i}))=d(F'_i,F'_{i,{i}})=e^{-1}>\epsilon$.

Define $g:=\widetilde{f^{-1}}$. Then it easy to check that $g$ is
uniformly continuous and the induced map $\hat{g}$ is such that
$\hat{g}|_{F'[0,\infty)}\to F[0,\infty)$ is an isometric embedding
$\forall F'\in end(T',w)$.

Nevertheless, the end spaces of these trees are in fact uniformly
homeomorphic, and hence, as it has been proved, there are $f:(T,v)
\to (T',w)$, and $f':(T',w)\to (T,v)$ metrically proper maps
between trees such that $f\circ f'\simeq_P id_{T'}$ and $f'\circ
f\simeq_P id_{T}$. These can be $f'=\hat{g}$ and $f: (T,v) \to
(T',w)$ such that $f(F[0,1])=w \ \forall F\in end(T,v)$,
$f(F_i(t))=F'_i(t-1) \ \forall t\in [1,\infty)$, $\forall i\in
\mathbb{N}$, $f(F_{i,k}(t))=F_{i,{k+i-1}}(t-1) \ \forall t\in
[1,\infty)$, $\forall k\geq 2$ and finally $f(F_{\frac{i\cdot
(i-1)}{2}+k,1})=F'_{i,k}$ $\forall k\leq i$, $\forall i\in
\mathbb{N}$. The uniform homeomorphism is the naturally induced by
these maps.

\section{Lipschitz maps and coarse maps between trees}

In this section lipschitz may be understood directly as
non-expansive, or lipschitz of constant 1. Nevertheless what is
written is true in general for the usual definition of lipschitz.

\begin{lema}\label{caso0} Let $x_1,x_2,y_1,y_2$ points in $\mathbb{R}$ then for
any $t\in [0,1]$, \[d(tx_1+(1-t)x_2,ty_1+(1-t)y_2)\leq
max\{d(x_1,y_1),d(x_2,y_2)\}\]
\end{lema}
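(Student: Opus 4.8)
The plan is to prove the inequality by a direct case analysis on the relative ordering of the four real numbers, reducing everything to the one-dimensional estimate $|a - b| \le \max\{|a|,|b|\}$ valid whenever $a$ and $b$ have the same sign (or one is zero). First I would observe that by symmetry of the two sides of the claimed inequality — swapping $(x_1,y_1)$ with $(x_2,y_2)$ simultaneously replaces $t$ by $1-t$ and leaves both sides unchanged — we may assume $x_1 \le x_2$ without loss of generality, and similarly one can normalize which of the two pairs is handled first. Write $a = tx_1 + (1-t)x_2$ and $b = ty_1 + (1-t)y_2$, so that $a$ is a convex combination lying in $[x_1,x_2]$ and $b$ lies in $[\min\{y_1,y_2\},\max\{y_1,y_2\}]$.

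The key step is the computation $a - b = t(x_1 - y_1) + (1-t)(x_2 - y_2)$, which exhibits $a-b$ as a convex combination of the two ``coordinate differences'' $d_1 := x_1 - y_1$ and $d_2 := x_2 - y_2$. Since $t \in [0,1]$, the value $a - b$ lies in the interval with endpoints $d_1$ and $d_2$; hence $|a-b| \le \max\{|d_1|,|d_2|\} = \max\{|x_1-y_1|,|x_2-y_2|\}$, which is exactly the desired bound. Indeed, for any real $d_1, d_2$ and any $t \in [0,1]$ we have $\min\{d_1,d_2\} \le td_1 + (1-t)d_2 \le \max\{d_1,d_2\}$, and both $\min\{d_1,d_2\}$ and $\max\{d_1,d_2\}$ have absolute value at most $\max\{|d_1|,|d_2|\}$, so $|td_1+(1-t)d_2| \le \max\{|d_1|,|d_2|\}$. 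This single observation disposes of the statement without any genuine case split at all.

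I do not expect any serious obstacle here: the statement is the elementary fact that taking the same convex combination in both arguments of the metric on $\mathbb{R}$ is non-expansive with respect to the sup-metric on pairs, and the only care needed is to write $a - b$ as a convex combination of $x_1-y_1$ and $x_2-y_2$ rather than trying to compare the convex combinations $a$ and $b$ directly. If one prefers a more pedestrian route, one can instead case-split on the signs of $x_1-y_1$ and $x_2-y_2$ and invoke the ultrametric-type inequality $|a-b|\le\max\{|a|,|b|\}$ for same-sign reals together with the fact that $a$ and $b$ both lie between the corresponding coordinates; but the convex-combination identity is cleaner and is the version I would write up.
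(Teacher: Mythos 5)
Your proposal is correct and follows essentially the same route as the paper: the key identity $tx_1+(1-t)x_2-[ty_1+(1-t)y_2]=t(x_1-y_1)+(1-t)(x_2-y_2)$ is exactly the paper's step, and whether one then bounds $|t d_1+(1-t)d_2|$ by the triangle inequality (as the paper does) or by noting that a convex combination lies between its endpoints (as you do) is an immaterial difference. The preliminary normalization $x_1\le x_2$ and the alternative sign case-split you sketch are unnecessary, as your own convex-combination argument already shows.
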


\begin{proof}
$d(tx_1+(1-t)x_2,ty_1+(1-t)y_2)=|tx_1+(1-t)x_2-[ty_1+\linebreak
(1-t)y_2]|=|t(x_1-y_1)+ (1-t)(x_2-y_2)|\leq t\cdot
|x_1-y_1|+(1-t)\cdot |x_2-y_2|\leq max\{d(x_1,y_1),d(x_2,y_2)\}$.
\end{proof}

\begin{lema} Let $f$,$g: T \to T'$ two metrically proper maps
between trees. Consider $H:T\times I \to T'$ the shortest path
homotopy defined in lemma \ref{short homotopy}, then for any two
points $x,y\in T$, \[d(H_t(x),H_t(y))\leq
max\{d(f(x),f(y)),d(g(x),g(y)) \}.\]
\end{lema}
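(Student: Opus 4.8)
The plan is to reduce the statement to the elementary inequality in Lemma~\ref{caso0} by exploiting the tree structure of $T'$. Fix $x,y \in T$ and $t \in I$. By definition of the shortest path homotopy, $H_t(x)$ lies on the arc $[f(x),g(x)]$ at parameter $t\cdot d(f(x),g(x))$, and similarly $H_t(y)$ lies on $[f(y),g(y)]$. The key observation is that in an $\mathbb{R}$-tree the four points $f(x),g(x),f(y),g(y)$ span a subtree with a very restricted combinatorial shape (an ``H'' or a ``star''): by repeated application of Lemma~\ref{arcosintersec} there are points $p \in [f(x),g(x)]$ and $q \in [f(y),g(y)]$ such that the arc $[p,q]$ (possibly degenerate) is the ``bridge'' joining the two arcs, and $[f(x),f(y)] = [f(x),p]\cup[p,q]\cup[q,f(y)]$, $[g(x),g(y)] = [g(x),p]\cup[p,q]\cup[q,g(y)]$.

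Next I would parametrize. Identify $[f(x),g(x)]$ isometrically with a real interval so that $f(x)\mapsto a_1$, $g(x)\mapsto a_2$, with $p$ corresponding to some value; then $H_t(x)$ corresponds to $t a_1 + (1-t)a_2$ under the obvious affine parametrization (here one uses that isometric parametrization of an arc turns the ``fraction of the way'' description of $H_t$ into a convex combination). Do the same for $[f(y),g(y)]$ with $f(y)\mapsto b_1$, $g(y)\mapsto b_2$. The distance $d(H_t(x),H_t(y))$ in $T'$ is then computed by going $H_t(x)\to p \to q \to H_t(y)$; since $T'$ is uniquely arcwise connected and $[H_t(x),H_t(y)]$ is forced to pass through $p$ and $q$, we get $d(H_t(x),H_t(y)) = d(H_t(x),p) + d(p,q) + d(q,H_t(y))$, and likewise $d(f(x),f(y)) = d(f(x),p)+d(p,q)+d(q,f(y))$ and $d(g(x),g(y)) = d(g(x),p)+d(p,q)+d(q,g(y))$. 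Because $H_t(x),f(x),g(x),p$ all lie on the single arc $[f(x),g(x)]$, the quantities $d(H_t(x),p)$, $d(f(x),p)$, $d(g(x),p)$ are just $|{\cdot}|$-distances of real numbers, and Lemma~\ref{caso0} applied on $[f(x),g(x)]$ gives $d(H_t(x),p) \le \max\{d(f(x),p),\,d(g(x),p)\}$ (taking the endpoint $p$ as the common target $y_1=y_2=p$). Analogously $d(q,H_t(y)) \le \max\{d(q,f(y)),\,d(q,g(y))\}$. Adding the common middle term $d(p,q)$ to both sides and combining yields
\[
d(H_t(x),H_t(y)) \le \max\{d(f(x),f(y)),\,d(g(x),g(y))\},
\]
which is exactly the claim.

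The main obstacle is the case analysis hidden in the phrase ``restricted combinatorial shape'': one must verify that the bridge points $p,q$ exist and that the three arc-decompositions above are simultaneously valid, including the degenerate configurations (e.g.\ $p=q$, or $p$ an endpoint of $[f(x),g(x)]$, or $f(x)=g(x)$ so that $[f(x),g(x)]$ collapses). Each degenerate case only makes the inequality easier, but they should be listed. A slightly cleaner route that avoids naming $p,q$ explicitly: let $z \in T'$ be the median of $f(x),f(y),g(x)$ and argue directly that $[H_t(x),H_t(y)]$ decomposes through the median of $H_t(x),f(y),g(y)$; however the bridge-point formulation above is the most transparent and leans only on Lemma~\ref{arcosintersec} and Lemma~\ref{caso0}, both already available.
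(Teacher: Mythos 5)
Your reduction to a single ``bridge'' $[p,q]$ only exists when the arcs $A=[f(x),g(x)]$ and $B=[f(y),g(y)]$ are disjoint or meet in a single point, and the missing configuration is not one of the degenerate cases you list: in an $\mathbb{R}$-tree two such arcs can overlap in a nondegenerate segment, and then no pair $p\in A$, $q\in B$ makes your three decompositions hold simultaneously. Concretely, take a tripod with centre $m$ and legs of positive length to $a,b,e$, and set $f(x)=a$, $f(y)=b$, $g(x)=g(y)=e$; then $A\cap B=[m,e]$, the decomposition of $d(g(x),g(y))=0$ would force $p=q=e$, but $d(f(x),f(y))=d(a,m)+d(m,b)<d(a,e)+d(e,b)$. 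This overlapping situation is as generic as the H-shape (of the three ways of pairing four points of a tree, two produce overlapping arcs), it is precisely the case in which both homotopy tracks run along a common segment, and it needs its own estimate; it is not subsumed by ``$p=q$, $p$ an endpoint, or $f(x)=g(x)$'', nor is it automatically easier.

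There is also a flaw in the final ``combining'' step even in the genuine bridge case: from $d(H_t(x),p)\le\max\{d(f(x),p),d(g(x),p)\}$ and $d(q,H_t(y))\le\max\{d(q,f(y)),d(q,g(y))\}$ you only get a bound by the maximum over all four mixed sums, i.e.\ by $\max\{d(f(x),f(y)),d(g(x),g(y)),d(f(x),g(y)),d(g(x),f(y))\}$, since a sum of maxima is a maximum over mixed choices. For a star with $p=q=m$, $d(f(x),m)=d(g(y),m)=10$ and $d(g(x),m)=d(f(y),m)=1$, your chain gives $20$ while the claimed bound is $11$. The repair is to keep the convex combinations that the proof of Lemma \ref{caso0} actually provides: $d(H_t(x),p)$ is bounded by the corresponding convex combination of $d(f(x),p)$ and $d(g(x),p)$, and similarly on $B$, so that after adding $d(p,q)$ one gets that $d(H_t(x),H_t(y))$ is at most the same convex combination of $d(f(x),f(y))$ and $d(g(x),g(y))$, hence at most their maximum; a coordinate computation of the same kind (splitting at the endpoints of $A\cap B$) then has to be supplied for the overlap case. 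Note that your route is genuinely different from the paper's, which first reduces to $d(f(x),f(y))=d(g(x),g(y))$ and then combines a small-$t$ estimate with an open--closed argument in $I$; once the two gaps above are closed, your global argument would in fact prove the stronger convexity inequality, but as written it does not yet prove the lemma.
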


\begin{proof} Suppose $d(f(x),f(y))<d(g(x),g(y))$. If for some
$t\in I \linebreak \quad d(H_t(x),H_t(y))>d(g(x),g(y))$ then there
must be some $t_0>t\in I$ such that
$d(H_{t_0}(x),H_{t_0}(y))=d(g(x),g(y))$. So let us assume
$d(f(x),f(y))=d(g(x),g(y))=d_0$, and it suffices to show that in
this case the condition is satisfied.

Now if we show that in this conditions there is always some
$\epsilon >0$ such that for any \ $0<t<\epsilon  \quad
d(H_t(x),H_t(y))\leq d_0$ then we have that this happens for any
$t$ in an open set of $I$ and by continuity of the metric, this
will be also a closed set of $I$ and hence, $d(H_t(x),H_t(y))\leq
d_0 \ \forall t \in I$.

Now to prove the lemma it suffices to distinguish the following
cases.

\underline{Case 1}. If $f(x)=g(x)$ (or $f(y)=g(y)$). Then there is
a unique arc, isometric to certain interval in $\mathbb{R}$ that
contains the points and this is the case of lemma \ref{caso0}.
\vspace{0.5cm}

Now we can assume $f(x)\neq g(x)$ and $f(y)\neq
g(y)$.\vspace{0.5cm}

\underline{Case 2}. If $f(x)\not \in [w,g(x)]$ and $f(y)\not \in
[w,g(y)]$.  Then there exists $\delta>0$ such that
$\delta<d(f(x),[w,g(x)])$ and $\delta<d(f(y),[w,g(y)])$. Let
$\epsilon$ such that $\epsilon<\frac{\delta}{d(f(x),g(x))}$ and
$\epsilon<\frac{\delta}{d(f(y),g(y))}$ then for every
$0<t<\epsilon$ $H_t(x)\not \in [w,g(x)] \Rightarrow H_t(x) \in
[w,f(x)]$ and $H_t(y)\not \in [w,g(y)] \Rightarrow H_t(y)\in
[w,(f(y)]$ and it is easy to check that for every $0<t<\epsilon$
$d(H_t(x),H_t(y))<d(f(x),f(y))=d_0$.

\underline{Case 3}. If $f(x)\in [w,g(x)]$ and $g(x)\in [w,f(y)]$.
Let $z\in T$ such that $[w,g(x)]\cap [w,f(y)]=[w,z]$. If $z=g(x)$
or $z=f(y)$ then there is an arc, isometric to an interval in
$\mathbb{R}$ that contains the points and this is again the case
of lemma \ref{caso0}. Now we find two different situations.

a) $z\in [w,f(x)]$ and $z\in [w,g(y)]$ then this is again an arc
isometric to an interval.

\begin{figure}[h]
\centering
\scalebox{1}{\includegraphics{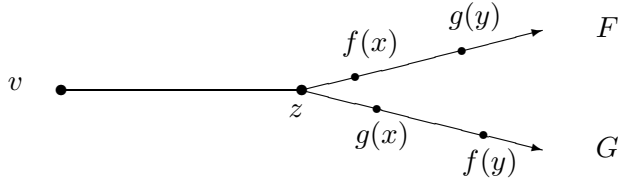}}
\caption{This is again the case of the previous lemma.}
\end{figure}

b) $z\not \in [w,f(x)]$ (if $z\not \in [w,g(y)]$ it is analogous).
Note that both $z,f(x) \in [w,g(y)]$ and so in this case $f(x)\in
[w,z]$. Let $\delta>0$ such that $\delta<d(f(x),z)$ and
$\delta<d(z,f(y))$. Let $\epsilon>0$ such that
$\epsilon<\frac{\delta}{d(f(x),g(x))}$ and
$\epsilon<\frac{\delta}{d(f(y),g(y))}$ then for every
$0<t<\epsilon$ $H_t(x)\in [f(x),z]$ and $H_t(y)\in [z,f(y)]$ and
hence, $d(H_t(x),H_t(y))<d(f(x),f(y))=d_0$.

\begin{figure}[h]
\centering
\scalebox{1}{\includegraphics{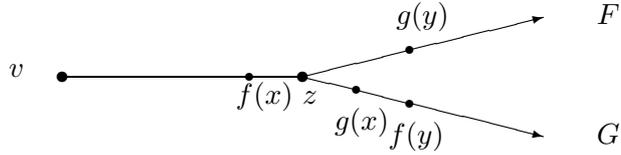}}
\caption{Case 3 b)}
\end{figure}

\end{proof}

Define $f\simeq_L f'$ if there exists $H:T\times I \to T'$ a
rooted metrically proper homotopy of $f$ to $f'$ such that $H_t$
is Lipschitz for every $t\in I$.

Also, $f\simeq_C f'$ if there exists $H:T\times I \to T'$ a rooted
(metrically proper) homotopy of $f$ to $f'$ such that $H_t$ is
coarse for every $t\in I$. Being metrically proper is already
supposed by definition of coarse.

The next propositions follow immediately from the lemma and
proposition \ref{equiv de aplic}.

\begin{prop} Given $f$,$f': T \to T'$ two lipschitz, metrically proper
maps between trees, then $\tilde{f}=\tilde{f'} \Leftrightarrow
f\simeq_L f'$.
\end{prop}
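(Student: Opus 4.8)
The plan is to prove both implications, using the already-established equivalence $f \sim f' \Leftrightarrow \tilde f = \tilde f'$ from Proposition \ref{equiv de aplic} together with the preceding lemma bounding $d(H_t(x),H_t(y))$ along the shortest path homotopy.

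For the forward implication, suppose $\tilde f = \tilde f'$. By Proposition \ref{equiv de aplic} this gives $f \sim f'$, and hence by Proposition \ref{homotopy} there is a rooted metrically proper homotopy of $f$ to $f'$; the natural candidate is the shortest path homotopy $H$ of Lemma \ref{short homotopy}, which was already shown in the proof of Proposition \ref{homotopy} to be metrically proper when $f \sim f'$. What remains is precisely to check that each level $H_t$ is Lipschitz (of constant $1$). This is where the preceding lemma does the work: for any $x,y \in T$ it gives
\[
d(H_t(x),H_t(y)) \leq \max\{d(f(x),f(y)),\, d(f'(x),f'(y))\}.
\]
Since $f$ and $f'$ are assumed Lipschitz of constant $1$, both $d(f(x),f(y)) \leq d(x,y)$ and $d(f'(x),f'(y)) \leq d(x,y)$, so $d(H_t(x),H_t(y)) \leq d(x,y)$, i.e. $H_t$ is Lipschitz of constant $1$ for every $t \in I$. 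Thus $H$ witnesses $f \simeq_L f'$.

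For the converse, suppose $f \simeq_L f'$. Then in particular there is a rooted metrically proper homotopy of $f$ to $f'$ (the Lipschitz condition on the levels is extra), so $f \simeq_{Mp} f'$, hence $f \sim f'$ by Proposition \ref{homotopy}, and therefore $\tilde f = \tilde f'$ by Proposition \ref{equiv de aplic}. So this direction is essentially immediate from the chain of equivalences already set up, and needs no new content beyond observing that $\simeq_L$ refines $\simeq_{Mp}$.

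I expect the only genuine obstacle to be a bookkeeping one in the forward direction: one must confirm that the homotopy produced is genuinely the shortest path homotopy (so that the bounding lemma applies to it) and that metric properness of that homotopy for $f \sim f'$ is exactly what was verified inside the proof of Proposition \ref{homotopy}. Both facts are on hand, so the argument is short; the delicate point is simply citing the right earlier result for each piece rather than any real estimate. A clean write-up will be: ($\Rightarrow$) invoke Proposition \ref{equiv de aplic} to get $f\sim f'$, take the shortest path homotopy $H$, use Proposition \ref{homotopy}'s construction for metric properness, and apply the preceding lemma with the Lipschitz hypothesis to bound each $H_t$; ($\Leftarrow$) a Lipschitz homotopy is a metrically proper homotopy, so apply Propositions \ref{homotopy} and \ref{equiv de aplic}.
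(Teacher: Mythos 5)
Your argument is correct and is exactly the route the paper intends: the paper's own justification is that the proposition "follows immediately" from the preceding lemma together with Proposition \ref{equiv de aplic}, and your write-up simply spells this out (shortest path homotopy, metric properness from the proof of Proposition \ref{homotopy}, the lemma plus the Lipschitz hypothesis to bound each level $H_t$, and the trivial converse since $\simeq_L$ refines $\simeq_{Mp}$). The only cosmetic remark is that for a general Lipschitz constant $K$ the same lemma gives $H_t$ Lipschitz with constant $\max$ of the two constants, which is all the definition of $\simeq_L$ requires.
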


\begin{cor} There is an equivalence of categories between $\mathcal{U}$
and the category of geodesically complete rooted
$\mathbb{R}$-trees with lipschitz, metrically proper homotopy
classes of lipschitz, metrically proper maps between trees.
\end{cor}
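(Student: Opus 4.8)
The plan is to assemble the corollary from the functor machinery already in place, essentially imitating the proof of the Main Theorem \ref{tma equiv} but with the refined homotopy relation $\simeq_L$ replacing $\simeq_{Mp}$. First I would introduce the category, call it $\mathcal{T}_L$, whose objects are geodesically complete rooted $\mathbb{R}$-trees and whose morphisms are $\simeq_L$-classes of lipschitz (non-expansive) metrically proper maps between trees; one must check $\simeq_L$ is an equivalence relation and is compatible with composition, so that $\mathcal{T}_L$ is genuinely a category — reflexivity and symmetry are trivial, transitivity follows by concatenating the two homotopies and reparametrizing, and the key point that the concatenation stays metrically proper and that each slice stays lipschitz is exactly what the preceding lemma guarantees for the shortest path homotopy.

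Next I would define the functor $\xi_L:\mathcal{T}_L\to\mathcal{U}$ exactly as $\xi$: on objects $\xi_L(T,v)=end(T,v)$, and on morphisms $\xi_L([f]_L)=\tilde f$. This is well defined by the immediately preceding proposition (that $\tilde f=\tilde{f'}\Leftrightarrow f\simeq_L f'$ for lipschitz metrically proper $f,f'$), and functoriality in composition is inherited from Proposition \ref{unica rama en imagen} exactly as for $\xi$. For the reverse direction one keeps $\eta$ essentially unchanged: given a uniformly continuous $f:U_1\to U_2$, the induced map $\hat f$ is \emph{already} lipschitz of constant $1$ and metrically proper between trees (Propositions in Section 7.1), so $\eta(f)=[\hat f]_L$ lands in $\mathcal{T}_L$, and $\eta$ is a functor for the same reason as before, using \ref{equiv de aplic}.

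Then I would verify the three hypotheses of the McLane-type lemma for $\xi_L$. Fullness: for any uniformly continuous $f$ we have $f=\widetilde{\hat f}=\xi_L([\hat f]_L)$, since $\hat f$ is a lipschitz metrically proper representative, so every morphism of $\mathcal{U}$ is hit. Faithfulness: if $\xi_L([f]_L)=\xi_L([f']_L)$, i.e. $\tilde f=\tilde{f'}$ with $f,f'$ lipschitz metrically proper, then $f\simeq_L f'$ by the preceding proposition, so $[f]_L=[f']_L$. Essential surjectivity: for any $U\in\mathcal{U}$, $\xi_L(T_U)=end(T_U)\approx U$ by Proposition \ref{isometry}. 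Hence $\xi_L$ is an equivalence of categories, which is the assertion of the corollary.

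The main obstacle I anticipate is not in $\xi_L$ itself but in confirming that $\simeq_L$ really is a well-behaved equivalence relation on the \emph{lipschitz} maps — specifically transitivity. Glueing two metrically proper homotopies $H:f\simeq_L g$ and $H':g\simeq_L f''$ produces a homotopy whose slices are lipschitz and which is metrically proper, but one should be slightly careful that the concatenated map $T\times I\to T'$ is still continuous and metrically proper at the joining time; this is routine but is the one place where something could go wrong. An alternative, cleaner route that sidesteps this is to observe that by the preceding proposition $f\simeq_L f'\Leftrightarrow\tilde f=\tilde{f'}$, so $\simeq_L$ is literally the equivalence relation ``induce the same end map'' restricted to lipschitz maps, which is transitive for free; with that observation the corollary is immediate from the chain of results above, and indeed the paper's phrase ``follow immediately from the lemma and proposition \ref{equiv de aplic}'' suggests this is the intended argument.
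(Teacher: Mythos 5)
Your proposal is correct and follows exactly the route the paper intends: restrict the functors $\xi$ and $\eta$ of Theorem \ref{tma equiv} to the lipschitz setting, using that $\hat f$ is already Lipschitz of constant $1$ for fullness and the proposition $\tilde f=\tilde{f'}\Leftrightarrow f\simeq_L f'$ for faithfulness, with essential surjectivity from \ref{isometry}. Your closing observation that $\simeq_L$ is simply ``inducing the same end map'' restricted to lipschitz maps, so transitivity is free, is precisely why the paper states the corollary as immediate.
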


\begin{prop} Given $f$,$f': T \to T'$ two coarse, metrically proper
maps between trees, then $\tilde{f}=\tilde{f'} \Leftrightarrow
f\simeq_C f'$.
\end{prop}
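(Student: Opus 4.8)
The plan is to mirror the proof of the Lipschitz version (the proposition just above, $\tilde f=\tilde f' \Leftrightarrow f\simeq_L f'$) almost verbatim, since the only difference is that the homotopy we produce must consist of coarse maps rather than Lipschitz maps. First I would handle the direction $f\simeq_C f' \Rightarrow \tilde f=\tilde f'$: a coarse homotopy is in particular a rooted metrically proper homotopy, so this is immediate from Corollary \ref{equiv de aplic} (via $f\simeq_{Mp}f' \Leftrightarrow \tilde f=\tilde f'$), and nothing coarse-specific is needed here.

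For the substantive direction, $\tilde f=\tilde f' \Rightarrow f\simeq_C f'$, I would take $H:T\times I\to T'$ to be the shortest path homotopy of $f$ to $f'$ furnished by Lemma \ref{short homotopy}. Two things must be checked. First, that $H$ is a rooted metrically proper homotopy: this is exactly what was proved in Proposition \ref{homotopy}, using that $\tilde f=\tilde f'$ forces $f\sim f'$ and hence for each $n$ there is $t_n$ with $f,f'$ sending $T\setminus B(v,t_n)$ into a common subtree $T'_{c'}\subset T'\setminus B(w,n)$, so that the shortest-path arc between $f(x)$ and $f'(x)$ stays in $T'_{c'}$; thus $H_t(T\setminus B(v,t_n))\subset T'\setminus B(w,n)$ for all $t$, which gives metric properness of $H$ and of each $H_t$. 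Second — and this is the only genuinely new point — that each slice $H_t$ is bornologous. Here I would invoke the lemma preceding these propositions: for any $x,y\in T$,
\[
d(H_t(x),H_t(y))\leq \max\{d(f(x),f(y)),d(f'(x),f'(y))\}.
\]
Since $f$ and $f'$ are coarse, hence bornologous, given $R>0$ there are $S_1,S_2>0$ with $d(x,y)<R \Rightarrow d(f(x),f(y))<S_1$ and $d(f'(x),f'(y))<S_2$; taking $S=\max\{S_1,S_2\}$ shows $d(H_t(x),H_t(y))<S$, so $H_t$ is bornologous. Combined with the metric properness established above, each $H_t$ is coarse, so $H$ witnesses $f\simeq_C f'$.

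I do not expect a serious obstacle: every ingredient is already in place, and the proof is essentially a bookkeeping combination of Proposition \ref{homotopy} (for metric properness of the shortest path homotopy) and the displayed inequality from the lemma above (for bornologousness of the slices). The one place to be slightly careful is to note that the lemma's inequality is stated with the $\max$ on the right, which is precisely what lets the single bound $S$ work uniformly in $t\in I$; and to recall that "coarse" already includes "metrically proper" by definition, so no extra argument for properness of the individual $H_t$ is needed beyond what Proposition \ref{homotopy} supplies. Hence the proof reads: one direction is Corollary \ref{equiv de aplic}; the other takes the shortest path homotopy, cites Proposition \ref{homotopy} for metric properness, and cites the preceding lemma for bornologousness of each slice.
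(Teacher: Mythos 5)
Your proposal is correct and is essentially the paper's own argument: the paper simply states that these propositions ``follow immediately'' from the preceding lemma and Proposition \ref{equiv de aplic}, and your write-up fills in exactly those steps — the shortest path homotopy is metrically proper as in Proposition \ref{homotopy}, and the lemma's $\max$ inequality makes each slice $H_t$ bornologous, hence coarse.
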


\begin{cor} There is an equivalence of categories between $\mathcal{U}$
and the category of geodesically complete rooted
$\mathbb{R}$-trees with coarse, (metrically proper) homotopy
classes of coarse, (metrically proper) maps between trees.
\end{cor}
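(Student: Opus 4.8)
The plan is to rerun the proof of the Main Theorem (Theorem \ref{tma equiv}) with $\mathcal{T}$ replaced by the category $\mathcal{T}_C$ whose objects are the geodesically complete rooted $\mathbb{R}$-trees and whose morphisms are the coarse (metrically proper) homotopy classes of coarse maps between trees. First I would check that $\mathcal{T}_C$ is indeed a category: a composite of two metrically proper maps is metrically proper and a composite of two bornologous maps is bornologous, so coarse maps compose to coarse maps; and $f\simeq_C f'$, $g\simeq_C g'$ imply $g\circ f\simeq_C g'\circ f'$, because pre- and post-composing a coarse homotopy with a coarse map again produces a coarse homotopy (the same control estimates used in the proof of Proposition \ref{homotopy}). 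Then define $\xi_C:\mathcal{T}_C\to\mathcal{U}$ on objects by $\xi_C(T,v)=end(T,v)$ and on morphisms by $\xi_C([f]_C)=\tilde f$. By the preceding Proposition, $f\simeq_C f'$ forces $\tilde f=\tilde{f'}$, so $\xi_C$ is well defined; functoriality ($\widetilde{g\circ f}=\tilde g\circ\tilde f$, and the identity goes to the identity) is Proposition \ref{unica rama en imagen}, exactly as in the proof that $\xi$ is a functor.

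Next I would verify the three clauses of the quoted lemma of MacLane for $\xi_C$. Essential surjectivity is identical to that in Theorem \ref{tma equiv}: for every $U\in\mathcal{U}$ one has $\xi_C(T_U)=end(T_U)\approx U$ by Proposition \ref{isometry}. For fullness, given a uniformly continuous $f:U_1\to U_2$ the induced map $\hat f:T_{U_1}\to T_{U_2}$ is Lipschitz of constant $1$ and metrically proper, hence coarse by the Remark asserting that $\hat f$ is coarse; so $[\hat f]_C$ is a morphism of $\mathcal{T}_C$ with $\xi_C([\hat f]_C)=\widetilde{\hat f}=f$. Faithfulness is the only clause using genuinely new input: if $\xi_C([f]_C)=\xi_C([f']_C)$, i.e.\ $\tilde f=\tilde{f'}$, then $f\simeq_C f'$ by the preceding Proposition, whence $[f]_C=[f']_C$.

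Thus the whole weight of the argument rests on the Proposition just proved, whose proof contains the only real obstacle: the lemma controlling $d(H_t(x),H_t(y))$ by $\max\{d(f(x),f(y)),d(g(x),g(y))\}$ along the shortest-path homotopy. Two small remarks should be added. First, one could instead note that the forgetful functor $\mathcal{T}_C\to\mathcal{T}$, sending a $\simeq_C$-class to its $\simeq_{Mp}$-class, is itself an equivalence: it is the identity on objects; it is faithful, since $f\simeq_{Mp} f'$ implies $\tilde f=\tilde{f'}$ (Propositions \ref{homotopy} and \ref{equiv de aplic}) and hence $f\simeq_C f'$ by the preceding Proposition; and it is full, since by the Corollary producing a Lipschitz-of-constant-$1$ representative in every metrically proper homotopy class each such class contains a coarse map. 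Composing this equivalence with $\xi$ yields the claim. Second, in either route one should confirm that the representatives used actually lie in $\mathcal{T}_C$, i.e.\ are bornologous; but a map that is Lipschitz of constant $1$ is bornologous (take $S=R$), so nothing is needed.
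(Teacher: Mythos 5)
Your proposal is correct and follows essentially the route the paper intends: the corollary is meant to follow from the preceding proposition ($\tilde f=\tilde{f'}\Leftrightarrow f\simeq_C f'$), the remark that $\hat f$ is coarse (Lipschitz of constant $1$ and metrically proper), and the full/faithful/essentially-surjective argument of Theorem \ref{tma equiv}, which is exactly what you rerun for $\mathcal{T}_C$. Your alternative route via the forgetful functor $\mathcal{T}_C\to\mathcal{T}$ is a harmless repackaging of the same ingredients, so no further comment is needed.
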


\begin{cor} Given $f: T \to T'$ a metrically
proper map between trees then there exists a rooted continuous
metrically proper non-expansive map $f': T \to T'$ such that
$f\simeq_{Mp} f'$.
\end{cor}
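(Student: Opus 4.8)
The plan is to use the categorical equivalence machinery already set up, together with the explicit construction of the non‑expansive map $\hat f$, to produce the desired representative. The statement is: given $f:T\to T'$ a metrically proper map between trees, there exists a rooted continuous metrically proper non-expansive map $f':T\to T'$ with $f\simeq_{Mp}f'$. The natural candidate is $f' := \widehat{\widetilde f\,}$, the non-expansive map induced (via Definition of $\hat{(\cdot)}$ and the modulus-of-continuity construction) by the uniformly continuous end map $\widetilde f:end(T,v)\to end(T',w)$ obtained from $f$ in Proposition \ref{unica rama en imagen}.

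\textbf{Step 1: Build $f'$.} By Proposition \ref{unica rama en imagen}, $f$ induces a uniformly continuous map $\widetilde f:end(T,v)\to end(T',w)$. Apply the construction of Section 7.1: from $\widetilde f$ obtain the convex modulus $\lambda:[0,1]\to[0,1]$ and then the induced map $\widehat{\widetilde f\,}:T\to T'$. By the propositions of that section, $\widehat{\widetilde f\,}$ is rooted, continuous, metrically proper between trees, and Lipschitz of constant $1$; in particular it is non-expansive. Set $f' := \widehat{\widetilde f\,}$.

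\textbf{Step 2: Identify the induced end maps.} The key point is that $\widetilde{f'} = \widetilde{\,\widehat{\widetilde f\,}\,} = \widetilde f$. This is exactly the "$\xi$ is full" computation already used in the proof of the Main Theorem \ref{tma equiv}: for any uniformly continuous $g$ one has $\widetilde{[\hat g]} = g$. Applying this with $g=\widetilde f$ gives $\widetilde{f'}=\widetilde f$. Hence $f$ and $f'$ are two metrically proper maps between trees inducing the same map between the end spaces.

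\textbf{Step 3: Conclude via Proposition \ref{equiv de aplic} and Proposition \ref{homotopy}.} Since $\widetilde{f}=\widetilde{f'}$, Proposition \ref{equiv de aplic} gives $f\sim f'$, and then Proposition \ref{homotopy} gives $f\simeq_{Mp} f'$. This closes the argument. \textbf{The main thing to be careful about} is simply that all the hypotheses feeding into the construction of Section 7.1 are met — namely that $\widetilde f$ really is uniformly continuous (guaranteed once $\tilde f$ is shown uniformly continuous, as in the relevant proposition) and that $end(T,v)$, $end(T',w)$ are complete ultrametric spaces of diameter $\le 1$ so that the modulus $\lambda$ exists — but these are all established earlier, so no genuinely new estimate is needed; the corollary is a formal consequence of full-faithfulness of $\xi$ together with the fact that $\eta$ lands in non-expansive maps.

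\begin{proof}
Let $f:(T,v)\to(T',w)$ be a metrically proper map between trees. By Proposition \ref{unica rama en imagen} it induces a uniformly continuous map $\tilde f:end(T,v)\to end(T',w)$ between the end spaces, which are complete ultrametric spaces of diameter $\le 1$. Apply the construction of Section 7.1 to $\tilde f$: this produces a convex, non-decreasing $\lambda:[0,1]\to[0,1]$ with $\lambda(0)=0$, $\lambda(1)=1$ and $\lim_{\delta\to 0}\lambda(\delta)=0$, and hence a map $f':=\widehat{\tilde f\,}:T\to T'$. By the results of that section, $f'$ is rooted, continuous, metrically proper between trees and Lipschitz of constant $1$, hence non-expansive.

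It remains to show $f\simeq_{Mp}f'$. By the argument used in the proof of Theorem \ref{tma equiv} (fullness of $\xi$), for any uniformly continuous map $g$ between the end spaces one has $\widetilde{\,\hat g\,}=g$; applying this with $g=\tilde f$ yields $\widetilde{f'}=\widetilde{\,\widehat{\tilde f\,}\,}=\tilde f$. Thus $f$ and $f'$ are metrically proper maps between trees inducing the same map between the end spaces, so by Proposition \ref{equiv de aplic} we have $f\sim f'$, and by Proposition \ref{homotopy} this means $f\simeq_{Mp}f'$.
\end{proof}
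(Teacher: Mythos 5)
Your proposal is correct and is essentially the argument the paper intends: the corollary is left as an immediate consequence of the construction $\tilde f \mapsto \widehat{\tilde f\,}$ (rooted, continuous, metrically proper, Lipschitz of constant 1), the fullness identity $\widetilde{\,\hat g\,}=g$, and Propositions \ref{equiv de aplic} and \ref{homotopy}. Your write-up just makes explicit the chain the paper treats as immediate.
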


\begin{cor} Given $f: T \to T'$ a rooted continuous coarse map
between trees then there exists a rooted continuous metrically
proper non-expansive map $f': T \to T'$ such that $f\simeq_C f'$.
\end{cor}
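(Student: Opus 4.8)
The plan is to repeat the construction used for the preceding corollary and verify that the homotopy it produces can be taken to be coarse. Since $f$ is coarse it is in particular metrically proper, so Proposition~\ref{unica rama en imagen} furnishes a uniformly continuous map $\tilde f:end(T,v)\to end(T',w)$, and I set $f':=\widehat{\tilde f}:T\to T'$. By the propositions established for maps of the form $\widehat{(\cdot)}$, the map $f'$ is rooted, continuous, metrically proper between trees and Lipschitz of constant $1$; a non-expansive map is bornologous, so $f'$ is in fact a rooted continuous \emph{coarse} map between trees --- this is precisely the earlier remark asserting that $\widehat g$ is coarse for every uniformly continuous $g$.

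Next I would check that $f$ and $f'$ induce the same map on the end spaces. The argument proving that $\xi$ is full in Theorem~\ref{tma equiv} shows $\widetilde{\widehat g}=g$ for every uniformly continuous $g$; taking $g=\tilde f$ gives $\widetilde{f'}=\widetilde{\widehat{\tilde f}}=\tilde f$. Thus $f$ and $f'$ are two coarse, metrically proper maps between trees with $\tilde f=\widetilde{f'}$.

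Finally I would invoke the proposition characterising $\simeq_C$ in terms of the induced maps on end spaces (the one stating that for coarse, metrically proper $f,f'$ one has $\tilde f=\widetilde{f'}\Leftrightarrow f\simeq_C f'$, itself a consequence of the shortest-path-homotopy lemma of that section together with Proposition~\ref{equiv de aplic}); it yields $f\simeq_C f'$, which is the assertion. The argument is entirely a matter of assembling results already proved, so there is no genuine obstacle; the only point deserving a moment's care is the observation that $\widehat{\tilde f}$ is coarse and not merely non-expansive and metrically proper, and that follows at once because any non-expansive map is bornologous.
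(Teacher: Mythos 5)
Your argument is correct and is essentially the proof the paper intends: take $f':=\widehat{\tilde f}$, note it is rooted, continuous, metrically proper and non-expansive (hence bornologous, hence coarse), observe $\widetilde{f'}=\tilde f$ as in the fullness part of Theorem \ref{tma equiv}, and conclude $f\simeq_C f'$ from the proposition characterising $\simeq_C$ via induced end maps. No gaps.
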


\section{Freudenthal ends and classical results}

This work allows us to give some new proofs of already known
results and to look at them from a new perspective. We also extend
in this section the field of our study to include some
considerations about non-rooted and non-geodesically complete
trees and how can we use or adapt our tools with them.

\paragraph{Pruning the tree}

When we have a non-geodesically complete rooted $\mathbb{R}$-tree
and we are only interested in the geodesically complete branches
we can prune the rest as follows.

\begin{teorema} If $(T,v)$ is a rooted $\mathbb{R}$-tree then,
there exists $(T_\infty,v)\subset (T,v)$ a unique geodesically
complete subtree that is maximal.
\end{teorema}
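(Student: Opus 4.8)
The plan is to define $T_\infty$ directly as the union of all geodesic rays emanating from $v$, and then check that this set is itself a geodesically complete subtree, that it contains every geodesically complete rooted subtree, and that these two properties pin it down uniquely. Concretely, I would set
\[
T_\infty := \bigcup \{\, \alpha[0,\infty) \ \mid\ \alpha:[0,\infty)\to T \text{ is an isometric embedding with } \alpha(0)=v \,\} = \bigcup_{F\in end(T,v)} F[0,\infty).
\]
Note $v\in T_\infty$ (it lies on any ray, and if there are no rays then $T=\{v\}$ and the statement is trivial), so we take $v$ as the root of $T_\infty$.

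First I would verify that $T_\infty$ is an $\mathbb{R}$-subtree: given $x,y\in T_\infty$, each lies on some ray through $v$, and I claim $[x,y]\subset T_\infty$. Using Lemma \ref{arcosintersec} pick $z$ with $[v,x]\cap[v,y]=[v,z]$; then $[x,y]=[z,x]\cup[z,y]\subset [v,x]\cup[v,y]$, and since $[v,x]$ is an initial segment of the ray carrying $x$ and similarly for $y$, the whole arc $[x,y]$ lies in $T_\infty$. Arc-uniqueness and the isometry-to-an-interval property are inherited from $T$, so $T_\infty$ is a rooted $\mathbb{R}$-tree. Next, geodesic completeness: let $g:[0,t_0]\to T_\infty$ be an isometric embedding with $g(0)=v$. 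Then $g(t_0)$ lies on some ray $F\in end(T,v)$, and since $[v,g(t_0)]$ is the unique arc from $v$ to $g(t_0)$ in $T$, we must have $g[0,t_0]=F[0,t_0]$; extending by $F$ itself gives an isometric embedding $[0,\infty)\to T_\infty$ extending $g$. Hence $T_\infty$ is geodesically complete.

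For maximality, let $S\subset T$ be any geodesically complete rooted subtree with root $v$. Given $x\in S$, the arc $[v,x]$ (computed in $T$, hence in $S$) is an isometric embedding of $[0,\|x\|]$ into $S$ with value $v$ at $0$; by geodesic completeness of $S$ it extends to a ray $\alpha:[0,\infty)\to S\subset T$, so $x\in \alpha[0,\infty)\subset T_\infty$. Thus $S\subset T_\infty$, which is exactly maximality. Uniqueness is then immediate: if $T_\infty'$ were another maximal geodesically complete subtree, maximality of $T_\infty$ gives $T_\infty'\subset T_\infty$, and maximality of $T_\infty'$ gives $T_\infty\subset T_\infty'$, so $T_\infty'=T_\infty$.

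The only genuinely delicate point is showing $T_\infty$ is closed under taking arcs (the subtree property) — one must be careful that the union of rays, which individually only branch at $v$ relative to a fixed base ray, is actually arcwise closed. The argument via Lemma \ref{arcosintersec} above handles this cleanly: the arc between two points of $T_\infty$ decomposes into two pieces each sitting inside one of the defining rays. Everything else (inheritance of the metric-tree axioms, the extension argument for geodesic completeness, and the two-way inclusion for uniqueness) is routine once this is in place.
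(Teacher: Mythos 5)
Your argument is correct in substance but takes a genuinely different route from the paper's. The paper obtains existence abstractly via Zorn's lemma applied to the poset of geodesically complete rooted subtrees of $(T,v)$ ordered by inclusion (nonempty because $\{v\}$ is a trivial geodesically complete subtree; chains have upper bounds because the union of a chain is again a geodesically complete subtree), and then deduces uniqueness from the observation that the union of any two geodesically complete subtrees is again one, so a maximal element is unique. You instead build the object explicitly as the union of all geodesic rays issuing from $v$ and prove it is itself a geodesically complete subtree that contains every geodesically complete rooted subtree, i.e.\ it is the \emph{maximum}, from which maximality and uniqueness are immediate. Your route avoids the axiom of choice, identifies $T_\infty$ concretely (which is what is actually relevant later, when the tree is ``pruned''), and correctly isolates the one delicate verification, namely that the union of rays is arcwise closed, handled via Lemma \ref{arcosintersec} and the decomposition $[x,y]=[x,z]\cup[z,y]$ with $[z,x]\subset[v,x]$ and $[z,y]\subset[v,y]$; the paper's route is shorter on verification but yields no explicit description.

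One small correction: your parenthetical claim that if $T$ admits no geodesic ray then $T=\{v\}$ is false --- a bounded rooted tree such as a segment $[0,1]$ rooted at $0$ has no rays but is not a single point. The repair is trivial: set $T_\infty:=\{v\}\cup\bigcup_{F} F[0,\infty)$ so that the root is always included. In the no-ray case $T_\infty=\{v\}$ is indeed the unique maximal geodesically complete subtree, because any geodesically complete rooted subtree containing a point $x\neq v$ would, by extending $[v,x]$ inside it, produce a geodesic ray of $T$. With this adjustment the rest of your argument goes through unchanged.
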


\begin{proof} Using Zorn's lemma. Consider
$(\mathcal{T}_{gc},\leq)$ with $\mathcal{T}_{gc}$ geodesically
complete subtrees of $(T,v)$ and $T_1 \leq T_2 \Leftrightarrow
T_1\subset T_2$. This is an ordered structure.

It is not empty since the root is a trivial geodesically complete
subtree.

To prove that every chain of $(\mathcal{T}_{gc},\leq)$ admits an
upper bound $T_M$ it suffices to show that the union of elements
of the chain is also a geodesically complete subtree of $(T,v)$.
It is a subset of the tree where every point is arcwise connected
to the root and hence it is obviously a subtree. Let $f:[0,t]\to
T_M, t>0$ any isometric embedding such that $f(0)=v$, then there
exists an element $T_0$ in the chain such than $f(t)\in
T_0\Rightarrow f[0,t]\in T_0$ and $f$ extends to an isometric
embedding $\tilde{f}:[0,\infty)\to T_0 \subset T_M$, and hence,
$T_M$ is geodesically complete.

Then, by Zorn's lemma $(\mathcal{T}_{gc},\leq)$ possesses a
maximal element.

The union of two elements of $(\mathcal{T}_{gc},\leq)$ is also a
geodesically complete subtree and hence, the maximal element
$(T_\infty,v)$ is unique.\end{proof}

\begin{lema}\label{retracto} If the metric of $(T_\infty,v)$ is proper then it
is a deformation retract of $(T,v)$.
\end{lema}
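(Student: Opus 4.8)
The plan is to realise $T_\infty$ as a strong deformation retract by projecting each point of $T$ onto $T_\infty$ along its geodesic to the root, and then applying the shortest path homotopy of Lemma~\ref{short homotopy}. First I would observe that the hypothesis enters only through closedness: a proper metric space is complete, and a complete subspace of a metric space is closed, so $T_\infty$ is closed in $T$.

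Next I would define the retraction. For $x\in T$ write $\alpha_x\colon[0,\|x\|]\to T$ for the arc-length parametrisation of $[v,x]$ with $\alpha_x(0)=v$ and $\alpha_x(\|x\|)=x$, and set $s(x)=\sup\{t\in[0,\|x\|]\mid\alpha_x(t)\in T_\infty\}$. The key small observation is that $\{t\mid\alpha_x(t)\in T_\infty\}$ equals the closed interval $[0,s(x)]$: it is closed as the preimage of the closed set $T_\infty$ under $\alpha_x$, and it is downward closed because a subtree containing the root contains the whole geodesic from the root to each of its points. Then $r(x):=\alpha_x(s(x))$ is a well-defined point of $T_\infty$, and $r|_{T_\infty}=\mathrm{id}$ since $x\in T_\infty$ forces $s(x)=\|x\|$.

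The heart of the proof is the continuity of $r\colon T\to T$, and this is the step I expect to require the most care. Given $x_n\to x$ I would take the branch point $y_n$ provided by Lemma~\ref{arcosintersec} with $[v,x_n]\cap[v,x]=[v,y_n]$; since $y_n$ lies on $[x_n,x]$ and on $[v,x]$ one gets $\|y_n\|\to\|x\|$ and $\|x_n\|\to\|x\|$. I would then split into two cases. If $r(x_n)$ lies on the part of $[v,x_n]$ beyond $y_n$, i.e. $\|y_n\|<s(x_n)\le\|x_n\|$, then $s(x_n)\to\|x\|$ and $d(r(x_n),x)=(s(x_n)-\|y_n\|)+(\|x\|-\|y_n\|)\to0$, so $r(x_n)\to x$; closedness of $T_\infty$ then gives $x\in T_\infty$, whence $r(x)=x$. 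If instead $r(x_n)\in[v,y_n]\subseteq[v,x]$ for infinitely many $n$, then $r(x_n)=\alpha_x(s(x_n))$ with $s(x_n)\le s(x)$; were some subsequence to satisfy $s(x_n)\to s'<s(x)$, then choosing $t$ with $s'<t<s(x)$ we would have $s(x_n)<t<\|y_n\|$ for large $n$, hence $\alpha_x(t)=\alpha_{x_n}(t)\notin T_\infty$, contradicting $\alpha_x(t)\in[v,r(x)]\subseteq T_\infty$. So $s(x_n)\to s(x)$, and in every case $r(x_n)\to r(x)$; the delicate point, ruling out that $[v,x_n]$ leaves $T_\infty$ strictly earlier than $[v,x]$, is exactly this last contradiction.

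Finally, because $r(x)\in[v,x]$, the arc $[r(x),x]$ is a subarc of $[v,x]$, so the shortest path homotopy $H\colon T\times I\to T$ from $\mathrm{id}_T$ to $r$ (regarded as a self-map of $T$) — continuous by Lemma~\ref{short homotopy} — slides each point toward the root along its own geodesic. It satisfies $H_0=\mathrm{id}_T$, $H_1=r$ with image contained in $T_\infty$, and $H_t|_{T_\infty}=\mathrm{id}$ for all $t$, which exhibits $T_\infty$ as a strong deformation retract, in particular a deformation retract, of $(T,v)$.
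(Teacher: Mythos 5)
Your proof is correct, and it follows the same overall skeleton as the paper's — a retraction of $T$ onto $T_\infty$ followed by the shortest path homotopy of Lemma \ref{short homotopy} — but it reaches the retraction by a somewhat different route. The paper defines $r(x)$ as the point of $T_\infty$ nearest to $x$, using properness of the metric of $(T_\infty,v)$ to see that the distance is attained, and unique arcwise connectedness for uniqueness; you instead take $r(x)$ to be the last point of $[v,x]$ lying in $T_\infty$, which exists as soon as $T_\infty$ is closed in $T$, a fact you correctly extract from properness via completeness. The two maps coincide: since $T_\infty$ contains $[v,z]$ for each $z\in T_\infty$, the geodesic from $x$ to any such $z$ passes through your $r(x)$, so it is also the nearest point. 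What your route buys is, first, that the lemma is seen to hold under the strictly weaker hypothesis that $T_\infty$ is closed (and $T_\infty$ need not be closed in general, so some hypothesis really is needed), and second, an explicit verification of the continuity of $r$ — via the branch points $y_n$ of Lemma \ref{arcosintersec} and your two-case analysis, both cases of which check out (in the first, closedness forces $x\in T_\infty$ and $r(x)=x$; in the second, the subsequence argument correctly rules out $s(x_n)$ dropping below $s(x)$) — whereas the paper's one-line proof leaves this continuity implicit even though Lemma \ref{short homotopy} is stated only for continuous maps. You in fact obtain a strong deformation retract, since $H_t$ fixes $T_\infty$ pointwise; the paper's version is shorter and uses the stated hypothesis more directly, but yours is complete where the paper's is telegraphic.
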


\begin{proof} Since the metric is proper, for any $x \in
T\backslash T_{\infty}$ there is a point $y \in T_{\infty}$ such
that $d(x,T_{\infty})=d(x,y)$ and it is unique since the tree is
uniquely arcwise connected. Let $r:T \to T_\infty$ such that
$r(x)=y \ \forall x\in T\backslash T_{\infty}$ and the identity on
$T_\infty$. Then $r$ is a retraction and the shortest path
homotopy makes the deformation retract. \end{proof}

\paragraph{Proper homotopies and Freudenthal ends}

\begin{definicion} Two proper maps $f,g: X\to Y$ are properly
homotopic $f\simeq_p g$ in the usual sense if there exists an
homotopy $H:X\times I \to Y$ of $f$ to $g$ such that $H$ is
proper.
\end{definicion}

\begin{definicion} $X,Y$ are of the same proper homotopy type or properly homotopic
in the usual sense if there exist two proper maps $f:X \to Y$ and
$g:Y\to X$ such that $g\circ f \simeq_p Id_X$ and $f\circ g
\simeq_p Id_Y$.
\end{definicion}

\textbf{Notation:} $\simeq_P$ means properly homotopic such that
the proper maps and the homotopy are rooted, and $\simeq_p$ is the
usual sense of proper homotopy equivalence.

\begin{lema}\label{prop homot} Let $S_1,S_2$ two locally finite simplicial trees and
consider any two points $x_1\in S_1,x_2\in S_2$. Then
$(S_1,x_1)\simeq_P (S_2,x_2)$ if and only if $S_1\simeq_p S_2$.
\end{lema}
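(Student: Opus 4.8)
The plan is to prove the two implications separately; the implication $(\Rightarrow)$ is immediate and all of the content sits in $(\Leftarrow)$. For $(\Rightarrow)$, a rooted proper map between trees is in particular a proper map, and a rooted proper homotopy is in particular a proper homotopy; so forgetting the basepoints turns any rooted proper homotopy equivalence $(S_1,x_1)\to(S_2,x_2)$ into an ordinary proper homotopy equivalence, i.e. $S_1\simeq_p S_2$. (Nothing beyond unwinding the definitions is needed here.)

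For $(\Leftarrow)$ I would start from proper maps $f\colon S_1\to S_2$ and $g\colon S_2\to S_1$ with $g\circ f\simeq_p id_{S_1}$ and $f\circ g\simeq_p id_{S_2}$, and upgrade them to rooted data in two steps. Step one is to rootify the maps. Put $y_0:=f(x_1)\in S_2$. Every point of the arc $[y_0,x_2]$ lies at distance $\le d(y_0,x_2)$ from $x_2$, so $[y_0,x_2]$ sits in the interior of the finite subtree $W:=\bar B(x_2,R)$ for $R>d(y_0,x_2)$; moving $y_0$ to $x_2$ along this arc by a piecewise--linear homotopy of $W$ that fixes $\partial W$, and extending it by the identity on $S_2\setminus W$, produces a proper map $G\colon S_2\to S_2$ with $G(y_0)=x_2$ together with a proper homotopy $G\simeq_p id_{S_2}$ (the homotopy is supported in the compact set $W$, so it is automatically proper; this is in the spirit of the shortest path homotopy of Lemma \ref{short homotopy}). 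Set $f':=G\circ f$; then $f'\simeq_p f$ and $f'(x_1)=x_2$. Symmetrically build $g'\colon S_2\to S_1$ with $g'\simeq_p g$ and $g'(x_2)=x_1$. Now $f'$ and $g'$ are rooted proper maps, one still has $g'\circ f'\simeq_p id_{S_1}$ and $f'\circ g'\simeq_p id_{S_2}$ as (non--rooted) proper homotopies, and the basepoint identities $g'f'(x_1)=x_1$, $f'g'(x_2)=x_2$ hold automatically.

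Step two is to rootify the homotopies. Given a proper homotopy $H$ from $g'f'$ to $id_{S_1}$ with $H(x_1,0)=H(x_1,1)=x_1$, the track $t\mapsto H(x_1,t)$ is a loop at $x_1$ in $S_1$; since $S_1$ is a tree, hence contractible, this loop is null-homotopic rel its endpoints, and the null-homotopy can be chosen inside the finite subtree spanned by the track. A standard rearrangement (pushing the basepoint track along that null-homotopy, using the homotopy extension property) then replaces $H$ by a proper homotopy $H'$ with the same ends and $H'(x_1,t)=x_1$ for all $t$; properness is not lost because the modification only alters $H$ inside a bounded neighbourhood of the compact track. This gives $g'f'\simeq_P id_{(S_1,x_1)}$, and symmetrically $f'g'\simeq_P id_{(S_2,x_2)}$, so $(S_1,x_1)\simeq_P(S_2,x_2)$.

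The delicate point is step two; I would expect to spend most of the care there. An alternative that sidesteps it is to route $(\Leftarrow)$ through the end-space machinery: pruning gives $(S_i,x_i)\simeq_P((S_i)_\infty,x_i)$ (for a locally finite simplicial tree the metric is proper and every pruned branch is a finite subtree hanging off a bounded part, so the deformation retraction of Lemma \ref{retracto} is proper, and ``proper'' and ``metrically proper'' coincide here); moreover $end((S_i)_\infty,x_i)=end(S_i)$ is a \emph{compact} ultrametric space, being complete and, by local finiteness, totally bounded. A (non--rooted) proper homotopy equivalence induces a homeomorphism of end spaces, and any homeomorphism between compact metric spaces is automatically a uniform homeomorphism, so Theorem \ref{tma equiv} yields $((S_1)_\infty,x_1)\simeq_P((S_2)_\infty,x_2)$ and hence $(S_1,x_1)\simeq_P(S_2,x_2)$. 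Either way the moral is the same: for trees, passing from ``properly homotopic'' to ``rootedly properly homotopic'' costs nothing, precisely because trees are contractible, and the only real work is making that passage rigorous.
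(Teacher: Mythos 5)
Your proposal is correct and follows essentially the same route as the paper: first rootify $f$ and $g$ by a proper homotopy supported on a compact neighbourhood of the arc $[x_2,f(x_1)]$ that drags $f(x_1)$ to the root, then rootify the homotopies by altering them only near the basepoint track inside a compact set, which preserves properness. The only cosmetic difference is that the paper builds the compact support from the arc together with compact neighbourhoods of its finitely many vertices (using local compactness) rather than from a closed ball $\bar B(x_2,R)$, and likewise you should say ``compact'' rather than ``bounded'' neighbourhood when arguing properness of the modified homotopy, since the simplicial metric need not be proper.
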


\begin{proof} The only if part is clear since it is a particular case.

The other part is rather technical. Consider $f: S_1 \to S_2$ and
$g:S_2\to S_1$ proper maps and the proper homotopies $H^1$ of
$g\circ f$ to $Id_{S_1}$ and $H_2$ of $f\circ g$ to $Id_{S_2}$.
First we construct two rooted proper maps redefining $f$ and $g$.
Consider the unique arc in $S_2$ $[x_2,f(x_1)]$. In order to
define the rooted proper map from $S_1$ to $S_2$ we are going to
send this arc with a proper homotopy to the root $x_2$ and to pull
somehow the rest of the tree after it.

Since $[x_2,f(x_1)]$ is compact and the tree is locally finite,
there are finitely many vertices $v_1, \ldots, v_n$ in this arc.
The tree is locally compact, hence consider
$\overline{B}(v_i,\epsilon_i)$ compact neighborhoods of $v_i$ with
$i=1,\ldots,n$ (we may assume that they are disjoint). We define
the homotopy such that sends $[x_2,f(x_1)]$ to $x_2$, that for
each point $y\in T_{v_i} \cap
\partial \overline{B}(v_i,\epsilon_i)$ goes linearly from
$[v_i,y]$ to $[x_2,y]$ and is the identity on the rest as follows.
If $x\in [x_2,f(x_1)]$ let $j_x:[0,d(x_2,x)]\to [x_2,x]$ an
isometry with $j_x(0)=x$ then $H(x,t)=j_x(t\cdot d(x_2,x))$. If
$x\in T_{v_i}\cap \overline{B}(v_i,\epsilon_i)$ then let
$j_x:[0,d(x_2,x)]\to [x_2,x]$ an isometry such that $j_x(0)=x$
then $H(x,t)=j_x\big(t\cdot \big[
\frac{d(v_i,x_2)+\epsilon}{\epsilon}\big(\epsilon -
d(x,v_i)\big)-(\epsilon-d(x,v))\big] \big)= j_x\big( t \cdot
\frac{d(v_i,x_2)}{\epsilon} (\epsilon-d(x,v)) \big)$.

\begin{figure}[h]
\centering
\scalebox{1}{\includegraphics{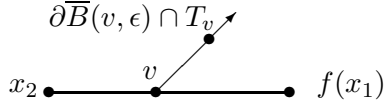}}
\caption{The homotopy sends $[x_2, f(x_1)]$ to $x_2$ and $[v,\partial\overline{B}(v,\epsilon) \cap T_v]$ to $[x_2,\partial\overline{B}(v,\epsilon) \cap T_v]$.}
\end{figure}

It is easy to check that $H(v_i\times I)=[x_2,v_i]$ with
$H(v_i,0)=v_i$ and $H(v_i,1)=x_2$, and $\forall y\in \partial
\overline{B}(v_i,\epsilon_i)\cap T_{v_i}\quad H(y,t)=y \quad
\forall t$. $H(x,t)=x$ on the rest of the tree. This map is
continuous. To see that it is proper first consider
$K_0:=[x_2,f(x_1)]\cup (\overset{n}{\underset{i=1}{\cup}}
\overline{B}(v_i,\epsilon_i))$ which is a compact subset of the
tree $S_2$, and hence $K_0\times I$ is a compact subset of
$S_2\times I$. For any compact set $K\in S_2$ $H^{-1}(K)$ is a
closed (since $H$ is continuous) subset of the compact set
$K_0\cup K$. Thus, $H$ is proper.

Clearly $f(x)=H(x,0)$ and let $\tilde{f}(x):=H(x,1)$. $\tilde{f}$
is proper, $\tilde{f}(x_1)=x_2$ (it is rooted) and $f\simeq_P
\tilde{f}$.

We do the same for $g:S_2 \to S_1$ and we get a rooted proper map
$\tilde{g}:(S_2,x_2) \to (S_1,x_1)$ such that $g\simeq_P
\tilde{g}$.

Hence we have a proper homotopy $H^1$ of $\tilde{g}\circ
\tilde{f}$ to $Id_{S_1}$ and also $H^2$ of $\tilde{f}\circ
\tilde{g}$ to $Id_{S_2}$.

$H^1$ is such that $H^1(x_1,0)=x_1=H^1(x_1,1)$. $S_1$ is locally
compact since it is locally finite, thus, consider
$\overline{B}(x_1,\epsilon)$ a compact neighborhood of the root,
and so $\overline{B}(x_1,\epsilon)\times I$ is compact. Now we
define the rooted homotopy which is the same at levels 0,1
$(\tilde{H}^1(x,0)=H^1(x,0)$ and $\tilde{H}^1(x,1)=H^1(x,1) \
\forall x\in S_1)$ and in the complement of the closed (and
compact) ball, (for each $t\in (0,1)$, $\tilde{H}^1(S_1\backslash
\overline{B}(x_1,\epsilon),t)=H^1(S_1\backslash
\overline{B}(x_1,\epsilon),t)$ ). In the closed ball we only need
$\tilde{H}^1(x_1,t)=x_1 \forall t$ and $\tilde{H}^1(x,t)=H(x,t)
\forall x \in \partial \overline{B}(x_1,\epsilon) \forall t$. We
can define that homotopy such that $\tilde{H}^1(x,t)\subset
H^1(\overline{B}(x_1,\epsilon)\times I)$ $\forall x \in
\overline{B}(x_1,\epsilon)$ and since
$H^1(\overline{B}(x_1,\epsilon)\times I)$ is compact, $\tilde{H}$
is also proper and rooted. We do the same with $H^2$ and finally
we have that $(S_1,x_1)\simeq_P (S_2,x_2)$.\end{proof}

We can now give another proof of the following corollary in
\cite{BQ}.
\begin{prop} Two locally finite simplicial trees are properly homotopic
(in the usual sense) if and only if their Freudenthal ends are
homeomorphic.
\end{prop}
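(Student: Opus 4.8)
The plan is to reduce this statement to the machinery already developed in the paper, chiefly Lemma~\ref{prop homot}, Lemma~\ref{retracto}, Theorem~\ref{tma equiv}, and the identification of Freudenthal ends with end spaces of $\mathbb{R}$-trees. First I would recall that for a locally finite simplicial tree $S$, the Freudenthal end space is precisely the end space $end(S_\infty,x)$ of the maximal geodesically complete subtree $S_\infty$ obtained by pruning (the finite dead branches contribute nothing to the ends), and that for locally finite simplicial trees the metric on $S_\infty$ is proper, so by Lemma~\ref{retracto} the inclusion $S_\infty \hookrightarrow S$ is a rooted deformation retract; in particular $(S,x) \simeq_P (S_\infty,x)$. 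Thus, after pruning, we may assume $S_1$ and $S_2$ are geodesically complete, and it suffices to prove the statement for such trees.

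Next I would chain the equivalences. By Lemma~\ref{prop homot}, $S_1 \simeq_p S_2$ in the usual (unrooted) sense if and only if $(S_1,x_1)\simeq_P (S_2,x_2)$ for a choice of basepoints. The key observation is that a rooted proper homotopy equivalence between \emph{geodesically complete} locally finite simplicial trees is, in particular, a metrically proper homotopy equivalence in the sense of Section~3: a proper map between locally finite trees takes bounded sets to bounded sets (here one uses local finiteness, so bounded sets are finite unions of compact pieces), hence is metrically proper, and conversely the homotopies produced in the proof of Lemma~\ref{prop homot} are rooted and proper, so restricting attention to the complements of balls centered at the root exhibits them as metrically proper homotopies. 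Therefore $(S_1,x_1)$ and $(S_2,x_2)$ are isomorphic objects in the category $\mathcal{T}$ of Section~8. By the Main Theorem~\ref{tma equiv}, the functor $\xi$ is an equivalence of categories, so $(S_1,x_1)\cong (S_2,x_2)$ in $\mathcal{T}$ if and only if $\xi(S_1,x_1)=end(S_1,x_1)$ and $\xi(S_2,x_2)=end(S_2,x_2)$ are isomorphic in $\mathcal{U}$, i.e.\ uniformly homeomorphic. Finally, since the Freudenthal end space is a compact (hence complete and bounded) metrizable zero-dimensional space, and the end space $end(S_\infty,x)$ with its ultrametric induces exactly the Freudenthal topology, a uniform homeomorphism of the ultrametric end spaces is in particular a homeomorphism, and conversely any homeomorphism between these compact ultrametric spaces is automatically uniformly continuous (a continuous map from a compact metric space is uniformly continuous) in both directions; so "uniformly homeomorphic" and "homeomorphic" coincide here. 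Stringing these equivalences together yields: $S_1\simeq_p S_2 \iff (S_1,x_1)\simeq_P(S_2,x_2) \iff (S_1,x_1)\cong(S_2,x_2)$ in $\mathcal{T}$ $\iff end(S_1,x_1)\cong end(S_2,x_2)$ in $\mathcal{U}$ $\iff$ the Freudenthal ends are homeomorphic.

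The main obstacle I anticipate is the careful bookkeeping at the two translation points: (i) verifying that an \emph{arbitrary} rooted proper homotopy equivalence between these trees really does descend to a metrically proper homotopy equivalence (i.e.\ that "proper" in the compact-preimage sense and "metrically proper" in the bounded-preimage sense agree for locally finite simplicial trees, using that closed balls are compact and that the maps in play are continuous so preimages of compact balls are closed), and (ii) checking that the passage $S \rightsquigarrow S_\infty$ is compatible with basepoint choices and does not lose or create ends — this is where Lemma~\ref{retracto} and the properness of the simplicial metric are essential, and one must note that different basepoints in the same connected tree give $\simeq_P$-equivalent rooted trees (which is implicit in Lemma~\ref{prop homot} itself). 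Everything else is a direct invocation of the already-established equivalence of categories and the compactness of Freudenthal end spaces.
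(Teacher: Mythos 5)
Your proposal is correct and follows essentially the same route as the paper's own proof: Lemma \ref{prop homot} to pass between rooted and unrooted proper homotopy, pruning to the maximal geodesically complete subtree with Lemma \ref{retracto} (the retraction being proper by local finiteness), the identification of the Freudenthal ends with $end(T_i,v)$, and the main categorical equivalence together with properness of the metric to translate $\simeq_{Mp}$ into $\simeq_P$. The only differences are cosmetic: the paper first re-metrizes so that every simplex has length $1$ (which you assume implicitly) and verifies by induction that closed balls are compact, while the compactness argument you give for ``homeomorphic $=$ uniformly homeomorphic'' is left implicit in the paper.
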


\begin{proof} Let $S_1,S_2$ two simplicial, locally finite
trees. Let $v\in S_1 \mbox{ and } w \in S_2$ any two points, hence
$(S_1,v)$ and $(S_2,w)$ are two rooted trees, and by lemma
\ref{prop homot} $(S_1,v)\simeq_P (S_2,w)$ if and only if
$S_1\simeq_p S_2$.

We can change the metric on the simplices and assume length 1 for
each simplex. Then we have two  homeomorphic copies of the
simplicial rooted trees $(S_1',v)\cong (S_1,v)$ and $(S_2',w)\cong
(S_2,w)$ (in particular $(S_1',v)\simeq_P (S_1,v)$ and
$(S_2',w)\simeq_P (S_2,w)$), such that the non-compact branches
are geodesically complete.

The metrics on $(S_1',v)$ and $(S_2',w)$ are proper. It suffices
to check that any closed ball centered at the root is compact and
this can be easily done by induction on the radius. Since the
trees are locally finite and the distance between two vertices is
at least 1, the closed ball $\overline{B}(v,1)$ (similarly
$\overline{B}(w,1)$) is a finite union of compact sets (isometric
to the subinterval $[0,1]$ in $\mathbb{R}$). Let
$\overline{B}(v,n)$ a finite union of compact sets, $\partial
\overline{B}(v,n)$ is a finite number of vertices and, since the
trees are locally finite and the distance between two vertices is
at least 1, $\overline{B}(v,n+1)$ is also a finite union of
compact sets. Thus every closed ball centered at the root is
compact.

$(S_1',v)$ and $(S_2',w)$ are proper length spaces, and by the
Hopf-Rinow theorem, see \cite{Roe1}, $(S_1',v)$ and $(S_2',w)$ are
complete and locally compact.

Now consider the maximal geodesically complete subtrees $(T_1,v)$
and $(T_2,w)$ of $(S_1',v)$ and $(S_2',w)$ (Note that these are
$\emptyset$ if and only if $(S_1,v)$ and $(S_2,w)$ are compact).
These trees are locally finite, complete, geodesically complete
and their metrics are proper. We can now find a proper homotopy
equivalence between the pruned tree $T_i$ and $S'_i$. The
retractions $r_i:(S'_i,v) \to (T_i,v)$, $i=1,2$, such that
$r_i(x)=y$ with $d(x,T_i)=d(x,y)$ defined in lemma \ref{retracto}
are proper maps since after the change of metric the bounded
branches are compact and the tree is supposed to be locally
finite. Clearly this retraction and the inclusion give us a rooted
proper homotopy equivalence between the trees, $(S_1',v)\simeq_P
(T_1,v)$ and $(S_2',w)\simeq_P (T_2,w)$. Thus
\[(S_1,v)\simeq_P  (T_1,v) \mbox{ and } S_2,w)\simeq_P  (T_2,w)\]

It is well known that in this conditions
$end(T_1,v)=Fr(S'_1,v)=Fr(S_1)$ and $end(T_2,w)=Fr(S'2,w)=Fr(S_2)$
and as we proved, $end(T_1,v)\cong end(T_2,w)\Leftrightarrow
(T_1,v) \simeq_{Mp} (T_2,w)$. If the metric is proper $(T_1,v)
\simeq_{Mp} (T_2,w) \Leftrightarrow (T_1,v) \simeq_{P} (T_2,w)$
and hence $Fr(S_1)\cong Fr(S_2) \Leftrightarrow (T_1,v)\simeq_P
(T_2,w)$.

Thus, $Fr(S_1)\cong Fr(S_2) \Leftrightarrow
(S_1,v)\simeq_P(S_2,w)\Leftrightarrow S_1\simeq_p S_2$.\end{proof}

There is also an immediate proof of the following corollary in
\cite{Hug}.

\begin{prop} Two geodesically complete rooted $\mathbb{R}$-trees,
$(T,v)$ and $(S,w)$, are rooted isometric if and only if
$end(T,v)$ and $end(S,w)$ are isometric.
\end{prop}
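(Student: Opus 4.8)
The plan is to prove both implications by working directly with geodesic rays. The forward implication is routine. If $h:(T,v)\to(S,w)$ is a rooted isometry, then $F\mapsto h\circ F$ carries each isometric embedding $F:[0,\infty)\to T$ with $F(0)=v$ to an isometric embedding into $S$ starting at $w$, and it is a bijection with inverse $G\mapsto h^{-1}\circ G$. Since $h$ is injective, $\sup\{t\ge 0\mid h(F(t))=h(G(t))\}=\sup\{t\ge 0\mid F(t)=G(t)\}$ for all $F,G\in end(T,v)$, so this bijection preserves $d_e$; hence $end(T,v)\approx end(S,w)$.

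For the converse, given an isometry $\phi:end(T,v)\to end(S,w)$, I would define $h:T\to S$ by $h(x)=\phi(F)\big(\|x\|\big)$, where $F\in end(T,v)$ is any ray with $F(\|x\|)=x$; such a ray exists by Proposition \ref{arbol}. (This is exactly the map $\hat f$ constructed above for $f=\phi$: for an isometry the associated modulus of continuity $\lambda$ is the identity, so $-\ln\big(\lambda(e^{-\|x\|})\big)=\|x\|$.) Thus well-definedness together with $h(v)=w$ and $\|h(x)\|=\|x\|$ follow exactly as in the discussion of $\hat f$: if $F(\|x\|)=G(\|x\|)=x$ then $F$ and $G$ agree on $[0,\|x\|]$, so $d_e(F,G)\le e^{-\|x\|}$, hence $d_e(\phi F,\phi G)\le e^{-\|x\|}$, and therefore $\phi(F)(\|x\|)=\phi(G)(\|x\|)$.

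The heart of the argument is that $h$ is an isometric embedding. Fix $x,y\in T$ and let $z$ be the point with $[v,x]\cap[v,y]=[v,z]$ (Lemma \ref{arcosintersec}); then $d(x,y)=\|x\|+\|y\|-2\|z\|$. Choose rays $F,G\in end(T,v)$ with $F(\|x\|)=x$ and $G(\|y\|)=y$, and set $s=\sup\{t\ge 0\mid F(t)=G(t)\}$, with $s=\infty$ if $F=G$. Since both $F$ and $G$ trace $[v,z]$ on $[0,\|z\|]$ we have $s\ge\|z\|$, and also $\|x\|,\|y\|\ge\|z\|$, so $\min\{\|x\|,\|y\|,s\}\ge\|z\|$. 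If this minimum were strictly larger than $\|z\|$, then picking $t_1$ with $\|z\|<t_1<\min\{\|x\|,\|y\|,s\}$ would give $F(t_1)=G(t_1)\in F([0,\|x\|])\cap G([0,\|y\|])=[v,x]\cap[v,y]=[v,z]$, contradicting $\|F(t_1)\|=t_1>\|z\|$; hence $\min\{\|x\|,\|y\|,s\}=\|z\|$. Because $\phi$ is an isometry, $\sup\{t\ge 0\mid \phi(F)(t)=\phi(G)(t)\}=s$ as well, so $\phi(F)$ and $\phi(G)$ are rays from $w$ whose agreement set is exactly $[0,s]$; repeating the same branch-point computation, now in $S$, yields $d\big(h(x),h(y)\big)=\|x\|+\|y\|-2\min\{\|x\|,\|y\|,s\}=\|x\|+\|y\|-2\|z\|=d(x,y)$.

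Finally, $h$ is onto: given $y'\in S$, use Proposition \ref{arbol} to pick a ray $G'\in end(S,w)$ with $G'(\|y'\|)=y'$, set $F=\phi^{-1}(G')$ and $x=F(\|y'\|)$; then $\|x\|=\|y'\|$ and $h(x)=\phi(F)(\|x\|)=G'(\|y'\|)=y'$. Therefore $h$ is a rooted isometry of $T$ onto $S$. I expect the isometric-embedding step to be the only genuine obstacle: concretely, the identity $\min\{\|x\|,\|y\|,s\}=\|z\|$ for the chosen rays, and the mirrored branch-point computation in $S$ that turns it into $d(h(x),h(y))=d(x,y)$; everything else is bookkeeping already carried out for $\hat f$.
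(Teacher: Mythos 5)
Your proposal is correct and follows essentially the same route as the paper: for the nontrivial direction it takes the induced map $\hat f$ with the identity as modulus of continuity, so that each branch is mapped isometrically, and then checks via the branch-point distance formula that distances between points on different branches are preserved. The only differences are cosmetic --- you handle all cases through the single identity $\min\{\|x\|,\|y\|,s\}=\|z\|$ and make surjectivity explicit, which the paper leaves implicit.
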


\begin{proof} If there is an isometry between the
trees then the induced map between their end spaces is clearly an
isometry.

Let $f: end(T,v)\to end(S,w)$ an isometry between the end spaces.
Then, to induce the map between the trees we can take the identity
as modulus of continuity. If $\lambda\equiv Id_{[0,1]}$ then
$f(F)(-ln(\lambda(e^{-t})))=f(F)(t) \forall F\in end(T,v) \forall
t\in [0,\infty)$ and the map restricted to the branches is an
isometry. For any two points in different branches
$x=F(t),y=G(t')$ with $-ln(d(F,G))<t,t'$. Since the end spaces are
isometric, the distance between two branches is the same between
their images and hence
$d(\hat{f}(x),\hat{f}(y))=t+t'-2(-ln(d(f(F),f(G))))=t+t'-2(-ln(d(F,G)))=d(x,y)$
and $\hat{f}$ is an isometry between the trees. \end{proof}

\paragraph{Non-rooted maps between the trees}
If the map is not rooted we can extend the idea of the rooted case
and define how a non-rooted metrically proper map induces a map
between the end spaces.

Let $f:(T,v)\to (T',w)$ any metrically proper (non-rooted) map
between two geodesically complete rooted $\mathbb{R}$-trees. Then
$\forall M>0 \quad \exists N>0$ such that $f(B(v,N))\subset
B(f(v),M)$. Let $d_0:=d(w,f(v))$, hence $f(B(v,N))\subset
B(w,M+d_0)$ and this is equivalent to $f^{-1}(T'\backslash
B(w,M+d_0))\subset T\backslash f(B(v,N))$. Now we can induce a
uniformly continuous map between the end spaces almost like in
\ref{unica rama en imagen}, since for each branch $F\in (T,v)$
there is a unique branch $F'\in (T',w)$ such that
$F'[d_0,\infty)\subset f(F)$ and so we define
$\tilde{f}:end(T,v)\to (T',w)$ such that $\tilde{f}(F)=F'$.

The results then are not so strong, as an example of this we can
give the following proposition.

\begin{prop} An isometry (non rooted) $f:(T,v) \to (S,w)$ between
two geodesically complete rooted $\mathbb{R}$-trees, induces a
bi-lipschitz homeomorphism between $end(T,v)$ and $end(S,w)$.
\end{prop}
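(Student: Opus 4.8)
The plan is to factor the induced map $\tilde f\colon end(T,v)\to end(S,w)$ (the one constructed immediately before the statement) as a composition $\tilde f=\rho\circ f_{\ast}$, where $f_{\ast}\colon end(T,v)\to end(S,p)$ with $p:=f(v)$ is the map $F\mapsto f\circ F$ obtained by viewing $f$ as a rooted isometry onto the tree $S$ re-rooted at $p$, and $\rho\colon end(S,p)\to end(S,w)$ is the canonical change-of-root map sending a geodesic ray $\alpha$ from $p$ to the unique geodesic ray $\rho(\alpha)$ from $w$ that eventually coincides with $\alpha$. First I would check that this composition really is $\tilde f$: by the construction before the statement $\tilde f(F)$ is the unique ray from $w$ whose tail (from parameter $d_{0}:=d(w,p)$ on) lies inside $(f\circ F)\big([0,\infty)\big)$, and that ray is exactly $\rho(f\circ F)$. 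I would also verify that $\rho$ is a well-defined bijection: given $\alpha\in end(S,p)$, let $m_{0}=\alpha(a)$ be the last point of $\alpha$ lying on the arc $[p,w]$ (such a point exists by Lemma~\ref{arcosintersec}); concatenating $[w,m_{0}]\subseteq[w,p]$ with $\alpha\vert_{[a,\infty)}$ yields a geodesic ray from $w$ — that the concatenation does not backtrack is a short consequence of unique arcwise connectedness of $S$ — and this ray is $\rho(\alpha)$; the symmetric construction produces $\rho^{-1}$.

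Since $f$ is a surjective isometry, $F\mapsto f\circ F$ is a bijection $end(T,v)\to end(S,p)$, and for rays $F,G$ from $v$ one has $\sup\{t:f(F(t))=f(G(t))\}=\sup\{t:F(t)=G(t)\}$; hence $f_{\ast}$ is an isometry of end spaces. So the whole statement reduces to showing that $\rho$ is bi-Lipschitz with multiplicative constant $e^{d_{0}}$. To prove this I would fix $\alpha_{1}\neq\alpha_{2}$ in $end(S,p)$ and let $m=\alpha_{1}(s)=\alpha_{2}(s)$ be their branch point, so that $d_{e}(\alpha_{1},\alpha_{2})=e^{-s}$ with $s=d(p,m)$. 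For $T$ large the arc $I:=[\alpha_{1}(T),\alpha_{2}(T)]$ passes through $m$, and $m$ is precisely the point of $I$ nearest to $p$. Because $\rho(\alpha_{i})$ coincides with $\alpha_{i}$ from some parameter on, the branch point $m'$ of $\rho(\alpha_{1}),\rho(\alpha_{2})$ is the point of the \emph{same} arc $I$ nearest to $w$, so $d_{e}(\rho\alpha_{1},\rho\alpha_{2})=e^{-d(w,m')}$. The map $x\mapsto d(x,I)$ being $1$-Lipschitz, $\big|d(p,m)-d(w,m')\big|=\big|d(p,I)-d(w,I)\big|\le d(p,w)=d_{0}$, which gives $e^{-d_{0}}\,d_{e}(\alpha_{1},\alpha_{2})\le d_{e}(\rho\alpha_{1},\rho\alpha_{2})\le e^{d_{0}}\,d_{e}(\alpha_{1},\alpha_{2})$. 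The identical bound applies to $\rho^{-1}$, so $\rho$, and therefore $\tilde f=\rho\circ f_{\ast}$, is a bi-Lipschitz homeomorphism (with constant $e^{d(f(v),w)}$).

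The hard part will be the geometric identification underlying the estimate: one must see the two branch points $m$ (measured from $p$) and $m'$ (measured from $w$) as the nearest points, to the two base points, of one single far-out arc $I$; once that picture is set up the bi-Lipschitz inequality is nothing but the $1$-Lipschitz property of distance to a set. The remaining points requiring attention are routine bookkeeping — that $\rho$ is a genuine bijection and that $\tilde f=\rho\circ f_{\ast}$ — and both follow from unique arcwise connectedness of $S$ together with the fact that a geodesic ray is determined by any one of its tails.
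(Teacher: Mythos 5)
Your proof is correct, and its mathematical heart coincides with the paper's: the paper argues directly on a pair of ends $F,G\in end(T,v)$ with branch point $x$, observes that $f(F[0,\infty))\cap f(G[0,\infty))=[f(v),f(x)]$ has length $-\ln d_e(F,G)$, and then asserts that the induced ends $\tilde f(F),\tilde f(G)$ coincide at least on $[0,-\ln d_e(F,G)-d_0]$ and at most on $[0,-\ln d_e(F,G)+d_0]$, which is precisely your statement that moving the basepoint from $p=f(v)$ to $w$ shifts the branching parameter by at most $d_0=d(w,f(v))$. What you do differently is organizational: you factor $\tilde f=\rho\circ f_*$, note that $f_*$ is an isometry of end spaces (trivially, since $f$ is a surjective isometry), and isolate all the content in the change-of-root map $\rho$, which you prove is bi-Lipschitz with constant $e^{d_0}$ by identifying the two branch points as the points of one far-out arc $I$ nearest to $p$ and to $w$ respectively and invoking the $1$-Lipschitz property of $x\mapsto d(x,I)$. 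This supplies an explicit justification of exactly the step the paper states without proof (the ``at least/at most'' coincidence intervals), and it yields a reusable lemma of independent interest --- re-rooting a geodesically complete $\mathbb{R}$-tree at a point at distance $d_0$ from the old root distorts the end metric by at most the factor $e^{d_0}$ --- at the cost of the bookkeeping needed to check that $\rho$ is a well-defined bijection and that $\rho\circ f_*$ agrees with the map $\tilde f$ defined in the paper; the paper's direct computation is shorter but terser on precisely these points. Both arguments give the same multiplicative constant $e^{d(w,f(v))}$.
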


\begin{proof} Let $f:(T,v) \to (S,w)$ a non-rooted isometry.
Consider $F,G$ any two branches in $end(T,v)$ and let $x\in T$
such that $F[0,\infty)\cap G[0,\infty)=[v,x]\approx
[0,-ln(d(F,G))]\subset \mathbb{R}$. Then $f(F[0,\infty))\cap
f(G[0,\infty))=[f(v),f(x)]\approx [0,-ln(d(F,G))]\subset
\mathbb{R}$ since $f$ is an isometry. Let $d_0=d(w,f(v))$, hence
$\tilde{f}(F)=:F'$ and $\tilde{f}(G)=:G'$ coincide at least on
$[0,-ln(d(F,G))-d_0]$ and at most on $[0,-ln(d(F,G))+d_0]$ and so,
$e^{ln(d(F,G))-d_0}\leq d(F',G')\leq e^{ln(d(F,G))+d_0}$ this is
$e^{-d_0}\cdot d(F,G)\leq d(F',G') \leq e^{d_0}\cdot d(F,G)$
$\Rightarrow$ $\tilde{f}$ is bi-lipschitz. \end{proof}

\end{document}